\DeclareRobustCommand\widecheck[1]{{\mathpalette\@widecheck{#1}}}
\def\@widecheck#1#2{%
    \setbox\z@\hbox{\m@th$#1#2$}%
    \setbox\tw@\hbox{\m@th$#1%
       \widehat{%
          \vrule\@width\z@\@height\ht\z@
          \vrule\@height\z@\@width\wd\z@}$}%
    \dp\tw@-\ht\z@
    \@tempdima\ht\z@ \advance\@tempdima2\ht\tw@ \divide\@tempdima\thr@@
    \setbox\tw@\hbox{%
       \raise\@tempdima\hbox{\scalebox{1}[-1]{\lower\@tempdima\box
\tw@}}}%
    {\ooalign{\box\tw@ \cr \box\z@}}}
\theoremstyle{definition}
\newtheorem{thm}{Theorem}[section]
\newtheorem{defn}[thm]{Definition}
\newtheorem{rem}[thm]{Remark}
\newtheorem{lem}[thm]{Lemma}
\newtheorem{exmp}[thm]{Example}
\newtheorem{cor}[thm]{Corollary}
\newtheorem{prop}[thm]{Proposition}
\newtheorem{defn-prop}[thm]{Definition-Proposition}
\newtheorem{prop-defn}[thm]{Proposition-Definition}
\newtheorem{thm*}{Theorem*}[section]
\newtheorem{prop*}[thm*]{Proposition}
\DeclareMathOperator{\grad}{grad}
\DeclareMathOperator{\red}{red}
\DeclareMathOperator{\reals}{\mathbb R}
\DeclareMathOperator{\C}{\mathbb{C}}
\DeclareMathOperator{\HMR}{\textit{HMR}}
\DeclareMathOperator{\HM}{\textit{HM}}
\DeclareMathOperator{\ind}{\mathsf{ind}}
\newcommand{\del}{\ensuremath{\partial}}
\newcommand{\delbar}{\ensuremath{{\bar{\partial}}}}
\newcommand{\pertL}{{\slashed{\mathcal L}}}
\newcommand{\gr}{\ensuremath{{\sf{gr}}}}
\newcommand{\Ball}{\ensuremath{{\sf{Ball}}}}
\title{Real Monopole Floer Homology and Skein Exact Triangles}
\date{\today}
\author{Jiakai Li}
\address{Dept. of Math., 
Harvard Univ., 
Cambridge MA, 
United States 02138}
\email{jiakaili@math.harvard.edu}
\begin{document}
\begin{abstract}
 	We prove an unoriented skein extract triangle for the real monopole Floer homology and introduce a Fr\o yshov-type invariant.
\end{abstract}
\maketitle
%\tableofcontents

\section{Introduction}
In \cite{ljk2022}, the author constructed \emph{real monopole Floer homologies} $\HMR^{\circ}(K)$ for links $K$ in the 3-sphere.
This paper relates the real monopole Floer homologies of an \emph{unoriented skein triple} of links $(K_0, K_1, K_2)$.
That is,
$K_i$'s differ in a small $3$-ball as in the following figure.
\begin{figure}[hb]
  \includegraphics[height=1.1in]{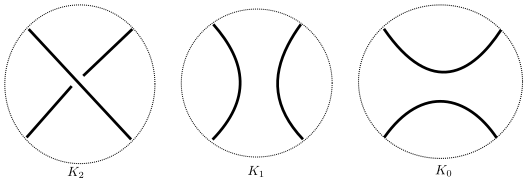}
\caption{Local pictures in an unoriented skein triangle.}  
\end{figure}

The construction of $\HMR^{\circ}$ is based on Kroheimer-Mrowka's monopole Floer homology \cite{KMbook2007}.
The idea is to consider the double branched cover along $K$ and invariant subspaces of the ordinary Seiberg-Witten configuration space under an anti-linear involution.
Such an involution is defined using the covering involution $\upiota$ of the double branched cover and an anti-linear lift $\uptau$ of $\upiota$ on the spinor bundle.
There are three flavours of the real monopole Floer homology 
$$ 
\widehat{\HMR}_*, \widecheck{\HMR}_*, \overline{\HMR}_*.
$$
For convenience we denote them as $\HMR^{\circ}$, for $\circ \in \{\vee, \wedge, -\}$.
%that fit into the long exact sequence
%\[\begin{tikzcd}
%	{} & {\widecheck{\HMR}(K)} & {\widehat{\HMR}(K)} & {\overline{\HMR}(K)} & {\widehat{\HMR}(K)} & {}
%	\arrow["{j_*}", from=1-2, to=1-3]
%	\arrow["{p_*}", from=1-3, to=1-4]
%	\arrow["{i_*}", from=1-1, to=1-2]
%	\arrow["{i_*}", from=1-4, to=1-5]
%	\arrow["{j_*}", from=1-5, to=1-6]
%\end{tikzcd}\]
The main result is the following.
\begin{thm}
\label{thm:main1}
Let $(K_0, K_1, K_2)$ be an unoriented skein triple of links.
Then there is an exact triangle 
\[\begin{tikzcd}
	{\HMR^{\circ}_*(K_2)} && {\HMR^{\circ}_*(K_1)} \\
	& {\HMR^{\circ}_*(K_0)}
	\arrow[from=1-1, to=1-3]
	\arrow[from=1-3, to=2-2]
	\arrow[from=2-2, to=1-1]
\end{tikzcd}\]
for each flavour $\circ \in \{\vee,\wedge,-\}$ of real monopole Floer homologies.
\end{thm}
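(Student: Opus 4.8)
The plan is to deduce Theorem~\ref{thm:main1} from an equivariant surgery exact triangle for the double branched covers, set up directly in terms of \emph{real} Seiberg--Witten moduli spaces on branched covers of saddle cobordisms; one cannot simply invoke an existing monopole surgery triangle, since $\HMR^\circ(K)$ is not the monopole Floer homology of $\Sigma(K)$, so the analytic content has to be reproved equivariantly. First I would record the topology. An unoriented skein triple is a local modification inside a ball $B^3\subset S^3$; under the double branched cover the three branch tangles in $B^3$ lift to solid tori whose regluings to the fixed exterior differ by distance-one slope changes, so $(\Sigma(K_0),\upiota_0)$, $(\Sigma(K_1),\upiota_1)$, $(\Sigma(K_2),\upiota_2)$ form a surgery triad: there is a knot invariant under the covering involution and three pairwise distance-one slopes whose fillings cyclically interchange the three manifolds, the involution extending over every filling. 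Correspondingly the saddle (band) cobordism $S_i\subset S^3\times[0,1]$ realising the local change from $K_{i+1}$ to $K_i$ lifts to a double branched cover $W_i\colon\Sigma(K_{i+1})\to\Sigma(K_i)$ that is the trace of a single $2$-handle attachment, carries the involutions on its ends, and --- after fixing over $W_i$ an anti-linear lift of the covering involution extending the given $\uptau$'s --- supports real spin$^c$ structures restricting to the given ones. Feeding each $W_i$, summed over the finitely many such real spin$^c$ structures, into the cobordism-map formalism of \cite{ljk2022} yields maps (I will not track grading shifts)
\[
F_i\colon\HMR^\circ_*(K_{i+1})\longrightarrow\HMR^\circ_*(K_i),\qquad i\in\mathbb Z/3,
\]
which are the three arrows $\HMR^\circ_*(K_2)\to\HMR^\circ_*(K_1)\to\HMR^\circ_*(K_0)\to\HMR^\circ_*(K_2)$.

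Second, I would prove $F_i\circ F_{i+1}=0$. Stacking the branched covers, $W_i\cup_{\Sigma(K_{i+1})}W_{i+1}$ is the trace of two $2$-handle attachments, and the standard Kirby manipulation --- the one showing that consecutive cobordisms in a surgery triad compose to a cobordism carrying an embedded $(-1)$-sphere --- can be carried out equivariantly, so the composite cobordism contains an embedded $2$-sphere $E$ of self-intersection $-1$ invariant under the covering involution. Vanishing of the composite map then follows from the equivariant analogue of the $(-1)$-sphere argument: stretching the neck around a tubular neighbourhood of $E$ (equivalently, blowing it down equivariantly) produces a piece on which the relevant real Seiberg--Witten moduli space is empty, forcing $F_i\circ F_{i+1}=0$. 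This gives the inclusion $\operatorname{im}F_{i+1}\subseteq\ker F_i$ for all $i$.

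Third --- the crux --- I would establish the reverse inclusion $\ker F_i\subseteq\operatorname{im}F_{i+1}$, following Floer's scheme as adapted by Ozsv\'ath--Szab\'o and Kronheimer--Mrowka. One adjoins an auxiliary meridional component to the surgery knot, assembles the resulting $2$-handle cobordisms together with explicit nullhomotopies of their composites into a diagram of mapping cones, and combines (i) the vanishing just proved, (ii) a large-slope (filtered) identification, via the auxiliary surgery and an energy/filtration argument, of one such cone with $\HMR^\circ$ of the third manifold, and (iii) a short homological-algebra lemma to deduce exactness at each vertex. The homological skeleton is formal once the geometry is in place; what it rests on is reproving, in the real setting, the full Floer package --- transversality for one- and two-parameter families of perturbations of the Seiberg--Witten equations taken within the $\uptau$-invariant configuration space, compactness and the classification of broken real trajectories, and gluing theorems identifying the ends of real moduli spaces on stretched cobordisms with fibre products. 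I would work throughout over $\Ftwo$, as in \cite{ljk2022}, which disposes of the orientability issues for real moduli spaces; and since the three flavours sit in a long exact sequence with respect to which the $F_i$ are natural, it should suffice to treat two of them (say $\circ=\wedge$ and $\circ=-$) and obtain the third by the five lemma.

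The step I expect to be the genuine obstacle is not the formal structure of the argument but the analysis underpinning it in the real theory, together with a global coherence problem: the surgery solid tori, the $2$-handle cobordisms, the covering involutions, and the anti-linear lifts $\uptau$ must all be chosen simultaneously compatibly, and one must verify that the real spin$^c$ structures and their charges behave as the blow-up vanishing and the large-slope identification require. The reducible locus in the real theory, governed by a $\mathbb Z/2$ rather than a $\U(1)$ symmetry, will also need separate care, in particular for the $\HMR^{-}$ flavour.
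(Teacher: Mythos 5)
Your opening setup (lifting the skein moves to a surgery triad on the branched double covers, taking $F_i$ to be the sums over real spin\textsuperscript{c} structures of the cobordism maps of the branched covers of the saddle cobordisms) matches the paper, but both of your central mechanisms have genuine gaps. First, the vanishing of the double composite does not come from emptiness of a real moduli space after an equivariant blow-down. When the metric is stretched along $\mathbb S_{20}$, the double composite splits off the branched cover of $(S^4,\mathbb{RP}^2)$ with self-intersection $+2$, a punctured $\overline{\mathbb{CP}}^2$; the real moduli spaces on this piece are \emph{not} empty (they contain reducible solutions for infinitely many spin\textsuperscript{c} structures $\mathfrak t_k$), and the composite count is even because the deck transformation interchanges the conjugate structures $\mathfrak t_k$ and $\mathfrak t_{-1-k}$, with the positive-scalar-curvature choice of metric needed only to kill the two borderline cases. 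Moreover, what the argument requires is not $F_iF_{i+1}=0$ on homology but a chain-level null-homotopy $H_{i,i-2}$, constructed from a one-parameter family of metrics interpolating between stretching along $\mathbb S_{20}$ and along $\mathbb Y_1$; your proposal only asserts that such null-homotopies exist.

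Second, and more seriously, your step for $\ker F_i\subseteq\operatorname{im}F_{i+1}$ — an auxiliary meridional component and a ``large-slope (filtered) identification'' of a mapping cone — is a Heegaard-Floer-style device for which no analogue is available in (real) monopole Floer theory, and you do not construct one. The paper instead follows the KMOS scheme: a triangle-detection lemma whose second hypothesis is that $H_{i-1}f_i+f_{i-2}H_i$ is a quasi-isomorphism, verified by a second homotopy $G$ coming from a pentagon-shaped two-parameter family of metrics on the triple composite, yielding the identity~\eqref{eqn:gauge_triangle_isom} with error term $\check L$, and then proving $\check L$ is a quasi-isomorphism. That last step is precisely where the real-specific analysis lives: the separating hypersurface is $S_{30}\cong S^1\times S^2$, the branched cover of $(S^3,\text{U}_2)$, whose real Floer complex has \emph{two} towers of reducibles, both of which contribute (unlike the ordinary theory, where parity excludes one); one must compute the dimensions of real moduli spaces on the branched cover of the twice-punctured $\mathbb{RP}^2$ region, show $\bar n_s=0$ (Corollary~\ref{cor:M30_dim}), and prove the leading coefficient $a^1_0+a^1_{-1}$ is odd by a mod-2 degree argument for real abelian ASD solutions (Lemma~\ref{lem:sumis1}). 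None of this appears in your outline. Finally, the proposed reduction ``prove two flavours and get the third by the five lemma'' does not work — exact triangles are not short exact sequences to which the five lemma applies — whereas the paper proves the from-version directly and notes the other flavours are handled by the same argument.
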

Unoriented skein triangles were constructed in e.g. \cite{Manolescu2007triangle,KMunknot2011} for various versions of Floer homologies of knots and links.
They are closely related to surgery exact triangles e.g. \cite{Floer1995triangle,OzSz2004,KMOS2007} for Floer homologies of 3-manifolds via the branched double cover.
Floer homologies of branched double covers as link invariants were explored in e.g.  \cite{OzSz2005HFdc,Bloom2011,Scaduto2015}.
Our main theorem is a refinement of this idea.
The key observation behind the proof of Theorem~\ref{thm:main1} is that the reducible solutions involved in the proof in \cite{KMOS2007} of the surgery exact triangle in the monopole Floer homology can be made compatible with real structures.

\begin{rem}
    In this paper, we specialize to classical links in $S^3$ to keep notations minimal.
    However, we see no difficulty in extending the proof of Theorem~\ref{thm:main1} (which is local in nature) to more general links in 3-manifolds.
    Since real monoopole Floer homologies were defined for manifolds with involutions \cite{ljk2022}, $\HMR^{\circ}$ can be defined for a link $K$ in a 3-manifold $Y$ such that
    \[
        [K] = 0 \in H_1(Y,\mathbb Z/2).
    \]
    This assumption guarantees the existence of double branched covers along $K$, and Theorem~\ref{thm:main1} is expected to hold for this class of links in 3-manifolds.
    See Remark~\ref{rem:generalK} for more details.
\end{rem}

Furthermore, we define an absolute grading for the real monopole Floer homology of links and introduce a real version of Fr\o yshov invariant \cite{FroyshovMonopoleQH3}.
We give a basic version of Fr\o yshov's inequality, and compute the invariant in some simple examples.
Knot invariants from Fr\o yshov-type invariants on branched covers were previously studied in e.g. \cite{OzSz2005HFdc,ManolescuOwens2007}.
Analogues of our \emph{real} Fr\o yshov invariant were introduced in \cite{KMT2023} using Seiberg-Witten-Floer spectra.
It is natural to expect an isomorphism between the real monopole Floer homologies and the real Seiberg-Witten-Floer homologies of \cite{KMT2023}.

%\begin{thm}
%	There is an isomorphism of Floer homology groups
%	\begin{equation*}
%	\HMR^{\circ}_{\bullet}(K_2) \cong H_{\bullet}(\text{Cone}(\HMR^{\circ}_{\bullet}(K_1) \to \HMR^{\circ}_{\bullet}(K_0))),
%	\end{equation*}
%	where $\text{Cone}$ denotes the mapping cone complex, and the map is $\HMR^{\circ}_{\bullet}(S_{10}): \HMR^{\circ}_{\bullet}(K_1) \to \HMR^{\circ}_{\bullet}(K_0)$ induced by the skein cobordism $S_{10}$.
%\end{thm}

\subsection{Organization of sections}
\hfill \break
We review the basics of real monopole Floer homology in Section~\ref{sec:HMRforLinks}.
We define the absolute $\mathbb Q$-grading and Fr\o yshov invariant in Section~\ref{sec:Froyshov}.
We discuss the topological aspects of the skein triangle in Section~\ref{sec:top_skein} and give the proof of the main theorem in Section~\ref{sec:prof_skein}.
Some examples are discussed in Section~\ref{sec:exmp}.

\section{Real Monopole Floer Homologies for Links}
\label{sec:HMRforLinks}
We begin with the general setup of real monopole Floer homologies $\HMR^{\circ}(Y,\upiota)$ as invariants of 3-manifolds with involutions $(Y,\upiota)$.
\subsection{Formal properties}
\hfill \break
Let $Y$ be a closed oriented 3-manifold and $\upiota:Y \to Y$ be a smooth, orientation-preserving involution.
The real monopole Floer homologies are $\mathbb F_2$-vector spaces
\begin{equation*}
	\widecheck{\HMR}_*(Y, \upiota), \quad
	\widehat{\HMR}_*(Y, \upiota), \quad \overline{\HMR}_*(Y, \upiota).
\end{equation*}
Each of the group decomposes into a direct sum over real \emph{spin\textsuperscript{c} structures} $(\mathfrak s, \uptau)$, which play the r\^{o}le of  spin\textsuperscript{c} structures in $\HM$:
\begin{equation*}
	\HMR^{\circ}_{*}(Y,\upiota) = \bigoplus_{\mathfrak s} \HMR^{\circ}_{*}(Y,\upiota;\mathfrak s,\uptau).
\end{equation*}
\begin{defn}
	A \emph{real} spin\textsuperscript{c} structure is a pair $(\mathfrak s, \uptau)$, such that $\mathfrak s = (S,\rho)$ is a spin\textsuperscript{c} structure on $Y$, and $\uptau:S \to S$ is an anti-linear involutive lift of $\upiota$, which is \emph{compatible with $\mathfrak s$} in the sense that $\langle \uptau(s_1), \uptau(s_2)\rangle_{\upiota(y)}=\overline{\langle s_1, s_2\rangle_y}$ and 
	 \begin{equation*}
		\rho(\upiota_* \xi)\uptau(\Phi_{\upiota(y)}) = \uptau(\rho(\xi)\Phi_y),
	\end{equation*}
	for any $y \in Y$, any vector field $\xi$ on $Y$, and any spinor $\Phi \in \Gamma(S)$.
 An anti-linear involution $\uptau:S \to S$ covering $\upiota: Y \to Y$ is a \emph{real structure} of $S$.
\end{defn}
Let $K \subset S^3$ be a link and $\mathfrak s$ be a spin\textsuperscript{c} structure on its branched double cover  $\Sigma_2(S^3,K)$, equipped with the covering involution $\upiota_{deck}$.
While compatible real structures do not always exist on a general 3-manifold with involution, any spin\textsuperscript{c} structure on $\Sigma_2(S^3, K)$ supports a unique compatible real structure, up to equivalence.
Applying $\HMR^{\circ}$ to $(\Sigma_2(S^3, K),\upiota_{deck})$ yields
\begin{equation*}
	\widecheck{\HMR}_*(K;\mathfrak s), \quad
	\widehat{\HMR}_*(K;\mathfrak s), \quad 
    \overline{\HMR}_*(K;\mathfrak s),
\end{equation*}
which are isotopy invariants of $K$.

Each group admits a grading ``$*$'' by set $\mathbb J(\mathfrak s)$ with a $\mathbb Z$-action.
We use ``$\bullet$'' to denote the completion (to be defined in Section~\ref{subsec:gauge_constr}) of the grading ``$*$'' .
Then $\HMR^{\circ}_{\bullet}(K;\mathfrak s)$ is a $\mathcal R_n$-module, where $n$ is the number of components of $K$ and $\mathcal R_n$ is the ring
\begin{equation*}
	\mathcal R_n = \frac{\mathbb F_2[[\upsilon_1, \dots, \upsilon_n]]}{\upsilon_i^2 = \upsilon_j^2},
\end{equation*}
such that each $\upsilon_i$ has degree $(-1)$.
Denote the squares $\upsilon_i^2$ by $U$.
The ring $\mathcal R_n$ is related to the cohomology of the Picard torus.

Let $K_-, K_+$ be two links.
A \emph{cobordism} $\Sigma: K_- \to K_+$ is a properly embedded, possibly non-orientable surface in $[0,1] \times S^3$, whose boundary consists of $\{0\} \times K_-$ and $\{1\} \times K_+$.
The functoriality of $\HMR^{\circ}$ states that there exists an $\mathbb F_2[[U]]$-module map
\begin{equation*}
	\HMR^{\circ}_{\bullet}(\Sigma):\HMR^{\circ}_{\bullet}(K_-) \to \HMR^{\circ}_{\bullet}(K_+)
\end{equation*}
satisfying the composition law.
This is a special case of the functoriality of $
\HMR^{\circ}$ for 3-manifolds with involutions.
\begin{rem}
    We only work with double branched covers so a ``branched cover'' will always mean a double branched cover.
\end{rem}

\subsection{The gauge-theoretic construction}
\label{subsec:gauge_constr}
\hfill \break
Let $(Y,\upiota)$ be a 3-manifold with involution and $(\mathfrak s, \uptau)$ be a real spin\textsuperscript{c} structure.
Let  $g$ be an $\upiota$-invariant Riemannian metric.
The \emph{real} Seiberg-Witten configuration space is the space of pairs $(A,\Phi)$ such that
\begin{itemize}[leftmargin=*]
\item $A$ is a $\uptau$-invariant spin\textsuperscript{c} connection for $\mathfrak s$, in the sense that
\begin{equation*}
	\nabla_A = \uptau \circ \nabla_A \circ \uptau.
\end{equation*}
The space $\mathcal A(Y,\mathfrak s,\uptau)$  of $\uptau$-invariant spin\textsuperscript{c}-connections is affine over $1_S \otimes \Omega^1(Y;i\reals)^{-\upiota^*}$, consisting of the $\upiota^*$-anti-invariant imaginary-valued 1-forms.

\item $\Phi$ is $\uptau$-invariant, in the sense that
\begin{equation*}
		\Phi_y =\uptau(\Phi_{\upiota(y)}).
	\end{equation*}
Denote by $\Gamma(S)^{\uptau}$ the real subspace of $\uptau$-invariant spinors.
\end{itemize}
The \emph{real Seiberg-Witten configuration space} is the product  space
\begin{equation*}
	\mathcal C(Y, \mathfrak s,\uptau)=
	\mathcal A(Y,\mathfrak s,\uptau) \times
	\Gamma(S)^{\uptau}.
\end{equation*}
The gauge group $\mathcal G(Y,\upiota)$ is the subgroup of $\upiota$-invariant automorphisms
\begin{equation*}
	\mathcal G(Y,\upiota) = \{\bar u(\upiota(y)) = u(y)\},
\end{equation*}
acting on $\mathcal C$ by
\[u \cdot (A,\Phi) = (A - u^{-1}du, u\Phi).\]
The subgroup of constant gauge transformation is $\{\pm 1\}$.
A configuration $(A,
\Phi)$ is \emph{reducible} if $\Phi = 0$, and the space of reducible configurations is precisely the fixed-point set of the $\{\pm1\}$-action.
The space of equivalence classes of configurations is defined to be the quotient
\begin{equation*}
	\mathcal B(Y, \mathfrak s,\uptau) =
	\mathcal C(Y, \mathfrak s,\uptau)/\mathcal G(Y,\upiota).
\end{equation*}

The Chern-Simons-Dirac (CSD) functional is a $\uptau$-invariant function $\mathcal L: \mathcal C(\mathfrak s) \to \reals$, whose formal $L^2$-gradient vector field is given by
\begin{equation*}
	\left(\frac{1}{2}*F_{B^t} + \rho^{-1}(\Psi\Psi^*)_0,
	D_B\Psi
	\right) \in \Omega^1(Y;i\mathbb{R}) \oplus \Gamma(S),
\end{equation*}
where $(\Psi\Psi^*)_0 = \Psi\Psi^* - \frac12|\Psi|^2$ is a traceless,  self-adjoint endomorphism of the spinor bundle, and hence lies in the image of $\rho(\Omega^1(Y;i\reals))$.
The gradient flow equations are exactly the Seiberg-Witten equations over the cylinder $\reals \times Y$.

The idea of Floer homology is to study the Morse theory of the vector field $\grad \mathcal L$.
In our setup, the Morse theory happens only on the invariant (\emph{real}) subspace of the ordinary Seiberg-Witten configuration space.
To deal with reducibles, we follow Kronheimer-Mrowka's approach by blowing up the configuation space along the reducibles: set
\begin{equation*}
	\mathcal C^{\sigma}(Y,\mathfrak s, \uptau) = \{(B, r, \psi): r \in \reals \text{, } (B,\psi) \in \mathcal C(Y,\mathfrak s, \uptau)  \text{, and } \|\psi\|_{L^2(Y)}=1 \}.
\end{equation*}

The $\{\pm 1\}$ gauge transformations act freely on $	\mathcal C^{\sigma}(Y,\mathfrak s, \uptau)$, and topologically the blowup process replaces the locus $\mathcal A(Y,\mathfrak s,\uptau) \times \{0\}$ by the unit sphere of the $L^2$-spinors.
The space of gauge-equivalence classes
\[
\mathcal B^{\sigma}(Y,\mathfrak s, \uptau)= \mathcal C^{\sigma}(Y,\mathfrak s, \uptau)/\mathcal G(Y,\upiota)
\] 
has the structure of a manifold with boundary $\del \mathcal B^{\sigma}(Y,\mathfrak s, \uptau)$.
The boundary is precisely the reducible blown-up configuration space, which is homotopy-equivalent to $\mathbb{RP}^{\infty}$.

We study the blown-up version $(\grad \mathcal L)^{\sigma}$ of the formal CSD gradient
that agrees with the pullback of $\grad \mathcal L$ under the blow-down map $\pi: (B,r,\psi) \mapsto (B,r\psi)$ over the irreducible.
While this is not the gradient with respect to any natural metric, $(\grad \mathcal L)^{\sigma}$ retains many important properties of a formal gradient.
The critical points of $(\grad \mathcal L )^{\sigma}$ come in three sorts:\begin{itemize}[leftmargin=*]
	\item the \emph{interior} critical points, i.e. the irreducibles,
	\item the \emph{boundary-stable} reducibles, at which the Hessian is positive along the normal direction to the boundary, and
	\item the \emph{boundary-unstable} reducibles, where the Hessian is negative  along the normal direction.
\end{itemize}
In fact, we perturb  $(\grad \mathcal L)^{\sigma}$ to achieve transversality of moduli spaces of trajectories.
The usual transversality fails always when the trajectories go from a boundary-stable critical point to a boundary-unstable one (in this case, we refer to the moduli space as \emph{boundary obstructed}).
This phenomenon is present already in finite dimensional Morse theory of manifolds with boundaries.

Denote by $C^o, C^s, C^u$ the $\mathbb F_2$-vector spaces generated by interior, boundary-stable, and boundary-unstable critical points, respectively.
By counting zero-dimensional spaces of trajectories, we define linear maps
\begin{equation*}
	\del^o_o: C^o \to C^o, \quad 
	\del^o_s: C^o \to C^s \quad 
	\del^u_o: C^u \to C^o \quad 
	\del^u_s: C^o \to C^o.
\end{equation*}
Moreover, counting zero-dimensional space of reducible trajectories, we define maps
\begin{equation*}
	\bar \del^s_s: C^s \to C^s, \quad 
	\bar \del^u_s: C^u \to C^s \quad 
	\bar \del^s_u: C^s \to C^u \quad 
	\bar \del^u_u: C^u \to C^u.
\end{equation*}
We define three chain complexes
\begin{equation*}
	\bar C_k = C^s_k \oplus C^u_{k+1},\quad
	\check C_k = C^o_k \oplus C^s_k \quad
	\hat C_k = C^o_k \oplus C^u_k,
\end{equation*}
given by differentials
\begin{equation*}
	\bar\del = \begin{pmatrix}
		\bar\del^s_s & \bar\del^u_s\\
		\bar\del^s_u & \bar\del^u_u
	\end{pmatrix}, \quad 
	\check\del = \begin{pmatrix}
		\del^o_o & -\del^u_o\bar \del^s_u \\
		\del^o_s & \bar\del^s_s - \del^u_s\bar\del^s_u
	\end{pmatrix},
	\quad 
	\hat\del = \begin{pmatrix}
		\del^o_o & \del^u_o\\
		-\bar\del^s_u\bar\del^o_s & -\bar\del^u_u -\bar\del^s_u\del^u_s
	\end{pmatrix}
\end{equation*}
respectively.
We define
\begin{equation*}
	\widecheck{\HMR}_*(Y,\upiota;\mathfrak s, g,\uptau) = H_*(\check C,\check \del), \quad
	\widehat{\HMR}_*(Y,\upiota;\mathfrak s, g,\uptau) = H_*(\hat C,\hat \del), \quad
	\overline{\HMR}_*(Y,\upiota;\mathfrak s, g,\uptau) = H_*(\overline C,\bar \del).
\end{equation*}
The long exact sequence for the three Floer homology groups
\begin{equation*}
	\begin{tikzcd}
		{...} \ar[r,"i_*"] 
		& \widecheck{\HMR}_*(Y,\upiota;\mathfrak s,\uptau)	 \ar[r,"j_*"] 
		& \widehat{\HMR}_*(Y,\upiota;\mathfrak s, \uptau) \ar[r,"p_*"]
		& \overline{\HMR}_*(Y,\upiota;\mathfrak s, \uptau) \ar[r,"i_*"]
		& \widehat{\HMR}_*(Y, \upiota;\mathfrak s, \uptau) \ar[r,"j_*"]
		&{...}
	\end{tikzcd}
\end{equation*}
is defined by the (anti-)chain maps
\begin{equation*}
	i = \begin{pmatrix}
		0 & \del^u_o \\
		1 & -\del^u_s
	\end{pmatrix} :\bar C \to \check C,
	\quad 
	j = \begin{pmatrix}
		1 & 0 \\
		0 & -\delbar^u_s
	\end{pmatrix} :\check  C \to \hat  C, 
	\quad 
	p = \begin{pmatrix}
		\del^o_s & \del^u_s \\
		0 & 1
	\end{pmatrix} :\check  C \to \bar  C.
\end{equation*}

Let $\mathbb J(Y,\mathfrak s, \uptau)$ be the grading set. There is a $\mathbb Z$-action, denoted as
\begin{equation*}
	j \mapsto j+ n
\end{equation*}
for $j \in J(Y,\mathfrak s, \uptau)$.
We write $\HMR^{\circ}_*(Y,\upiota;\mathfrak s, g,\uptau)$ for the $J(Y,\mathfrak s, \uptau)$-graded real monopole Floer homology groups.
%A cobordism map $\HMR^{\circ}(W,\upiota_W)$ may have infinitely many entries in positive degrees, and the image of a single element in $\HMR^{\circ}(Y_-,\upiota_-)$ may not be contained in any finitely generated subgroup.
To define cobordism maps, we complete the Floer homology group with respect to the following filtration
\begin{equation*}
	\HMR^{\circ}_{\bullet}(Y,\upiota)[n] = \bigoplus_{\alpha} \bigoplus_{m \ge n}\HMR^{\circ}_{j_{\alpha}-m}(Y,\upiota),
\end{equation*}
where $\{j_{\alpha}: \alpha \in A\}$ is a choice of representatives of free $\mathbb Z$-orbits.

\subsection{Cobordism maps}
\hfill \break
Let $(X,\upiota)$ be a 4-manifold with boundary, equipped with an involution $\upiota$ that preserves the boundary components.
\begin{defn}
	A \emph{real} spin\textsuperscript{c} structure is a pair $(\mathfrak s, \uptau)$, where $\mathfrak s = (S^{\pm},\rho)$ is a spin\textsuperscript{c} structure on $X$, and $\uptau:S^{\pm} \to S^{\pm}$ is an anti-linear involution that covers $\upiota$ over the base, and \emph{compatible with $\mathfrak s$} in the sense that $\langle \uptau(s_1), \uptau(s_2)\rangle_{\upiota(x)}=\overline{\langle s_1, s_2\rangle_x}$ and 
	 \begin{equation*}
		\rho(\upiota_* \xi)\uptau(\Phi_{\upiota(x)}) = \uptau(\rho(\xi)\Phi_x),
	\end{equation*}
	for any $x \in X$, any vector field $\xi$ on $X$, and any spinor $\Phi \in \Gamma(S^+)$.
\end{defn}
Let $X^*$ be the manifold with cylindrical ends obtained by adjoining cylindrical ends $[0,\infty) \times \del X$ to $X$, where the involution $\upiota_X:X^* \to X^*$ acts trivially on the $[0, \infty)$ factor.
We perturb the Seiberg-Witten equations on a compact part on the cylinder.
The moduli space of the perturbed Seiberg-Witten solutions, asymptotic to a vector of critical points $[\boldsymbol{\mathfrak b}]$ on the cylindrical points 
\begin{equation*}
	M(X^*,\mathfrak s_X, \uptau_X;[\mathfrak b]) 
\end{equation*}
is regular, and admits compactifications by broken trajectories: 
\begin{equation*}
	M^+(X^*,\mathfrak s_X, \uptau_X;[\mathfrak b]) \text{ and }
	\bar{M}(X^*,\mathfrak s_X, \uptau_X;[\mathfrak b]),
\end{equation*}
where former compactification is finer than the latter (see \cite[Section~12]{ljk2022}).

We also consider parameterized moduli spaces.
Given a finite-dimensional manifold $P$, we  define
\begin{equation*}
	M(X^*,\mathfrak s_X, \uptau_X, [\mathfrak b])_P
	= 
	\bigcup_{p \in P} \ \{p\}
 \times M(X^*,\mathfrak s_X, \uptau_X, [\mathfrak b])_p.
\end{equation*}
Let $z$ be a connected component of $	M(X^*;\mathfrak s_X, \uptau_X, [\mathfrak b])_P$.
We denote the expected dimension of the moduli spaces as
\[
\gr_z (X^*,\mathfrak s_X, \uptau_X;[\mathfrak b])_P
\]
As before, we compactify the moduli spaces fibrewise to obtain
\begin{equation*}
	M^+(X^*,\mathfrak s_X, \uptau_X;[\mathfrak b])_P \text{ and }
	\bar{M}(X^*,\mathfrak s_X, \uptau_X;[\mathfrak b])_P.
\end{equation*}

Next, we assume $(W,\upiota_W):(Y_-,\upiota_-) \to (Y_+, \upiota_+)$ is a cobordism between 3-manifolds with involutions.
Fix a connected component $z$ of the blown-up configuration space over $W$, we denote the (coarse) compactified moduli spaces of perturbed Seiberg-Witten equations over cobordism as
\[
	\bar M_z([\mathfrak a],W^*,\upiota_W,[\mathfrak b])\text{ and }\bar M_z^{\text{red}}([\mathfrak a],W^*,\upiota_W,[\mathfrak b])
\]
where $[\mathfrak a]$ is a critical point on the incoming end $Y_-$ and $[\mathfrak b]$ is a critical point on the outgoing end $Y_+$.

The cobordism map involves the following matrix entries
\begin{equation*}
	m^o_o, m^o_s, m^u_o, m^u_s, \bar m^s_s, \bar m^u_u, \bar m^s_u, \bar m^u_s,
\end{equation*}
where the overlined versions count reducible solutions ``$M^{\red}$''. 
For example, $m^u_s: C^u_{\bullet}(Y_-,\upiota_-) \to C^s_{\bullet}(Y_+, \upiota_+)$ is given by counts of zero-dimensional moduli spaces:
\begin{equation*}
	\sum_{[\mathfrak b] \in \mathfrak C^s} \sum_{z} \# M_z([\mathfrak a], W^*, \upiota_W, [\mathfrak b]) \cdot [\mathfrak b].
\end{equation*}
The three cobordism maps
\begin{align*}
		\overline{\HMR}(W, \upiota_W):
		\overline{\HMR}_{\bullet}(Y_-, \upiota_-) 
		&\to \overline{\HMR}_{\bullet}(Y_+, \upiota_+),\\
		\widecheck{\HMR}(W, \upiota_W):
		\widecheck{\HMR}_{\bullet}(Y_-, \upiota_-) 
		&\to \widecheck{\HMR}_{\bullet}(Y_+, \upiota_+),\\
		\widehat{\HMR}(W, \upiota_W):
		\widehat{\HMR}_{\bullet}(Y_-, \upiota_-) 
		&\to \widehat{\HMR}_{\bullet}(Y_+, \upiota_+),
\end{align*}
are defined at chain level by the matrices
\begin{equation*}
	\bar m = \begin{pmatrix}
		\bar m^s_s && \bar m^u_s\\
		\bar m^s_u && \bar m^u_u
	\end{pmatrix}, \quad 
	\check m = \begin{pmatrix}
		m^o_o && -m^u_o \bar\del^s_u(Y_-) - \del^u_o(Y_+)\bar m^s_u\\
		m^o_s && \bar m^s_s - m^u_s\bar\del^s_u(Y_-,) -\del^u_s(Y_+)\bar m^s_u
	\end{pmatrix},
\end{equation*}
and
\begin{equation*}
	\hat m = \begin{pmatrix}
		m^o_o && m^u_o\\
		\bar m^s_u \del^o_s(Y_-)\sigma  - \bar\del^s_u(Y_+)m^o_s 
		&& 
			\bar m^u_s \sigma + \bar m^s_u \del^u_s (Y_-) \sigma - \bar\del^s_u(Y_+)m^u_s
	\end{pmatrix}.
\end{equation*}
We put brackets $(Y_{\pm})$ to distinguish the differentials on the two 3-manifolds (with involutions).
The approach in \cite{ljk2022}, following \cite{KMbook2007}, simultaneously evaluates a cohomology class of the configuration space $\mathcal B^{\sigma}(W^*,\upiota_W)$. In the case of a cylinder with a cohomology class, we obtain module structures of $\HMR^{\circ}$.

%Finally, we make a remark about our notations in the context of $\HMR$ for classical links. 
For branched covers of $[0,1] \times S^3$ and $B^4$, there is no ambiguity in the choices of the compatible real structures (see \cite[Section~3.1]{ljk2022}).
We will therefore write, for instance,
\begin{equation*}
	M_z([\mathfrak a], \Sigma, [\mathfrak b]), \quad
	M_z(\Sigma, [\mathfrak a])
\end{equation*}
for Seiberg-Witten moduli spaces over the branched covers along $\Sigma$ when $\Sigma$ is a subset of $[0,1] \times S^3$ or $B^4$.
More generally, $\Sigma$ can be a properly embedded surface in multiply-punctured $S^4$.
\begin{rem}
\label{rem:generalK}
The functoriality for $\HMR^{\circ}$ for 2-divisible links in 3-manifolds requires additional data.
To define $\HMR^{\circ}$, both the choice of real lifts and the choice of double branched covers are not necessarily unique.
The real monopole Floer homology group for a link $K$ in $Y$ is defined as the following direct sum
\[
    \HMR^{\circ}(K) = \bigoplus_{\mathbf Y, \mathfrak s, \uptau} \HMR^{\circ}(\mathbf Y, \mathfrak s, \uptau),
\]
where $\mathbf Y$ is a double branched cover of $Y$ along $K$, where $\mathfrak s$ is a spin\textsuperscript{c} structure, and $\uptau$ is a compatible real structure.
It follows that the cobordism maps must be equipped with the data of a branched cover, a spin\textsuperscript{c} structure, a real lift satisfying the natural compatibility conditions with the boundary data.
\end{rem}

\subsection{Examples of $\HMR^{\circ}(K)$}
\hfill \break
For the later sections on exact triangles we recall some elementary examples in \cite{ljk2022}, which all happen to admit (invariant) positive scalar curvature metrics.
This is not the case in general, and we refer the readers to \cite[Section~12]{ljk2022} for discussions on torus knots and Montesinos knots.
\begin{exmp}
	\label{exmp:U1}
	Let $\text{U}_1$ be the unknot. 
    A concrete model of the branched cover is the unit sphere $S^3 \subset \C^2$, where the covering involution is given by the conjugation action
	\[
	\upiota(z_1,z_2) = (\bar z_1, \bar z_2)
	\]
	over $\C^2$.
	The induced metric is $\upiota$-invariant and has positive scalar curvature.
	Let $\mathfrak s = (S,\rho)$ be the unique spin\textsuperscript{c} structure on $S^3$ and $\uptau: S \to S$ be a compatible real structure.
	By positive scalar curvature, there is no irreduible critical point and only one unique critical point $[B,0]$ of the unperturbed Chern-Simons-Dirac functional $\mathcal L$, where $B$ is a spin\textsuperscript{c} connection.
We perturb the the Dirac operator $D_B$
	to obtain an operator $D_{\mathfrak q, B}$ whose spectrum is simple and contains no zero.
 
	Label the eigenvalues $\{\lambda_i:i \in \mathbb Z\}$ of $D_{\mathfrak q, B}$ in increasing order, so that $\lambda_0$ is the smallest positive eigenvalue.
	Let $\{[\mathfrak a_i]\}$ be the corresponding reducible critical points of $(\grad \pertL)^{\sigma}$.
	There exists no irreducible trajectories on cylinders, and the moduli spaces
	\[
		M_{z}([\mathfrak a_i], [\mathfrak a_j])
	\]
	can be computed explicitly (see \cite[Proposition~12.2]{ljk2022} and \cite[Proposition~14.6.1]{KMbook2007}).
	In particular, one can show that all differentials vanish.
	The real monopole Floer homologies are isomorphic to their corresponding chain complexes, and $\upsilon$ acts by
	\[
		\upsilon [\mathfrak a_i] = [\mathfrak a_{i-1}].
	\]
	It follows that
	\begin{align*}
		\overline{\HMR}_*(\text{U}_1;\mathfrak s) 
		&\cong \mathbb F_2[\upsilon^{-1},\upsilon],\\
		\widehat{\HMR}_*(\text{U}_1;\mathfrak s) 
		&\cong \mathbb F_2[\upsilon],\\
		\widecheck{\HMR}_*(\text{U}_1;\mathfrak s) 
		&\cong \mathbb F_2[\upsilon^{-1},\upsilon]/\mathbb F_2[\upsilon],
	\end{align*}
\end{exmp}
\begin{exmp}
\label{exmp:U2}
	Let $\text{U}_2$ be the $2$-component unlink. 
	The double branched cover is $S^1 \times S^2$ and the covering involution $\upiota$ is given by $\upiota_1 \times \upiota_2$ where $\upiota_1: S^1 \to S^1$ reflects the circle fixing two points, and $\upiota_2:S^2 \to S^2$ is an orientation-reversing involution that swaps two hemispheres and fixes a great circle.
	The Riemmannian metric can be chosen $\upiota$-invariantly and have positive scalar curvature.
	Consider the unique torsion real spin\textsuperscript{c} structure $\mathfrak s_0$.
	Then there exists no irreducible critical points to the CSD functional and remains so under small perturbations.
	We perturb the CSD functional using a Morse function on the invariant flat connections over $S^1 \times S^2$ which is homeomorphic to a circle.
	Assume the Morse function has two critical points $\alpha^1$ and $\alpha^0$ of index $1$ and $0$, respectively. 
	The perturbed CSD critical points consist of two towers:
	\begin{equation*}
		\{e^1_i\}, \{e^0_i\},
	\end{equation*}
	where $i \in \mathbb Z$ and $\ind(e^1_i) = \ind(e^0_i) + 1$, lying above $[\alpha^1,0]$ and $[\alpha^0,0]$ in the non-blown-up configuration space.
	The \emph{real} indices of elements on the same tower satisfy $\ind(e^{\mu}_{i+1}) = \ind(e^{\mu}_i) + 1$, instead of a difference of two in $\HM$.
	Denote by $\mathfrak a^{\mu}_i$ the element  corresponding to $e^{\mu}_i$ in  homology.
	One can show that the differentials in $\HMR^{\circ}$ are all zero, and $\upsilon \mathfrak a^{\mu}_i = a^{\mu}_{i-1}$. We conclude that
	\begin{align*}
		\overline{\HMR}_*(U_2;\mathfrak s_0,\uptau_0) 
		&\cong \mathbb F_2[\upsilon^{-1},\upsilon]  \oplus \mathbb F_2[\upsilon^{-1},\upsilon]\langle +1\rangle \\
		\widehat{\HMR}_*(U_2;\mathfrak s_0,\uptau_0) 
		&\cong \mathbb F_2[\upsilon] \oplus \mathbb F_2[\upsilon]\langle +1\rangle ,\\
		\widecheck{\HMR}_*(U_2;\mathfrak s_0,\uptau_0) 
		&\cong \mathbb F_2[\upsilon^{-1},\upsilon]/\mathbb F_2[\upsilon] \oplus
		\left(\mathbb F_2[\upsilon^{-1},\upsilon]/\mathbb F_2[\upsilon]\right)\langle +1\rangle.
\end{align*}
We use the angled bracket to denote shifting of grading.
See \cite[Corollary~14.4]{ljk2022} for more details.
\end{exmp}

\begin{exmp}
\label{exmp:rational}
Let $K(p,q)$ be the $(p,q)$-rational knot, for $p > q$ coprime and $p > 2$.
The branched cover is $L(p,q)$, and can be thought of as the quotient space of the unit sphere $S^3 \subset \C^2$ under 
\begin{equation*}
	(z_1,z_2) \mapsto (e^{2\pi i/p}z_1,e^{2\pi iq/p}z_2).
\end{equation*}
The covering involution is induced by the conjugation action $(z_1,z_2) \mapsto (\bar z_1,\bar z_2)$ upstairs.
There is a unique spin structure and $p$ spin\textsuperscript{c} structures.
The real Floer homology groups are isomorphic to that of the unknot:
\begin{align*}
		\overline{\HMR}_*(K(p,q);\mathfrak s) 
		&\cong \mathbb F_2[\upsilon^{-1},\upsilon],\\
		\widehat{\HMR}_*(K(p,q);\mathfrak s) 
		&\cong \mathbb F_2[\upsilon],\\
		\widecheck{\HMR}_*(K(p,q);\mathfrak s) 
		&\cong \mathbb F_2[\upsilon^{-1},\upsilon]/\mathbb F_2[\upsilon].
\end{align*}
In particular, the Floer homology groups are completely determined by their real Fr\o yshov invariant, to be defined in Section~\ref{sec:Froyshov}.
This example was discussed in \cite[Section~14.2]{ljk2022}. 
\end{exmp}
\section{Absolute $\mathbb Q$-grading and the Fr\o yshov invariant}
\label{sec:Froyshov}
Let $K_-$ and $K_+$ be two links in $S^3$ and $Y_{\pm}$ be the corresponding branched covers.
Let $\Sigma: K_- \to K_+$ be a link cobordism, and $W$ be the branched cover of $[0,1] \times S^3$ along $\Sigma$.
Let $\iota(\Sigma)$ be the number such that $-\iota(\Sigma)$ is the index of the linearized abelian ASD operator over weighted Sobolev spaces:
\begin{equation*}
	d^* \oplus d^+:e^{\delta w} L^2_1(W^*; \Lambda^1)^{-\upiota^*}
	\to e^{\delta w} L^2_1(W^*; \Lambda^0 \oplus \Lambda^+)^{-\upiota^*},
\end{equation*}
where $\delta$ is a small positive weight, and $w$ is a function that restricts to the cylindrical end as the coordinate $t$.

We express $\iota(\Sigma)$ in terms of familiar topological quantities.
For $\ell \in \{0,1, +\}$, the Sobolev space of $\ell$-forms admits a decomposition by $(\pm 1)$-eigenspaces of the operator $\upiota^*$:
\begin{equation*}
	L^2_k(W^*;\Lambda^{\ell}) 
	=
	L^2_k(W^*;\Lambda^{\ell})^{\upiota^*} \oplus 
	L^2_k(W^*;\Lambda^{\ell})^{-\upiota^*}.
\end{equation*}
The $\upiota_W^*$-invariant spaces are naturally isomorphic to Sobolev spaces over the quotient $\reals \times S^3$:
\begin{equation*}
	L^2_k(W^*;\Lambda^{\ell})^{\upiota_W^*} \cong 
	L^2_k(W^*/\upiota_W;\Lambda^{\ell}) 
	\cong L^2_k(\reals \times S^3;\Lambda^{\ell}).
\end{equation*}
The ASD operator on the trivial bundle with a small positive weight over $\reals \times S^3$ is invertible.
Thus
\begin{equation*}
	\iota(\Sigma) 
	= \frac{\chi(W) + \sigma(W) + b_1(Y_+) - b_1(Y_-)}{2}.
\end{equation*}
%\\
%&= b^+(W) - b^1(W) + b^0(Y_+) + b^1(Y_+).
%\end{align*}
%The two formulae can be seen to be equivalent by the cohomology exact sequence of pairs $(W,\del W)$.
In particular, the number $\iota(\Sigma)$ agrees with $\iota(W)$ defined in \cite[Section~24.1]{KMOS2007}.
The following quantities of $W$ can be computed from $\Sigma:K_- \to K_+$ (see e.g. \cite{KauffmanTaylor1976})
\begin{align*}
	\sigma(W) &= \sigma(K_+) - \sigma(K_-) - \frac{\Sigma \cdot \Sigma}{2},
	\\
	\chi(W) &= b_1(\Sigma) - b_0(\Sigma).
\end{align*}
The self-intersection number $\Sigma \cdot \Sigma$ (or $\Sigma^2$) is well-defined even for non-orientable surfaces.
If $\Sigma$ is connected, then (cf. \cite{LeeWeintraub1995})
\begin{align*}
	b_1(W) &= 0,\\
	b^+(W) &= \frac{1}{2}\left(b_1(\Sigma)  + \sigma(K_+) - \sigma(K_-)-\frac{\Sigma \cdot \Sigma }{2}\right).
\end{align*}

Suppose $\Sigma$ is more generally a properly embedded surface in 
\[S^4 - \{\Ball_i: 1 \le i \le n+m\},\]  an $(n+m)$-times punctured sphere with $n$ incoming ends and $m$ outgoing ends. 
Let $W$ be the branched cover along $\Sigma$,
and let $-\iota(\Sigma)$ be the index over the $(-\upiota^*)$-invariant ASD-complex.
Then
\[
	-\iota(W) = \ind(d^* \oplus d^+) = \ind^{\upiota^*}(d^* \oplus d^+) + \ind^{-\upiota^*}(d^* \oplus d^+) 
	= (n-1) - \iota(\Sigma),
\]
as over the punctured sphere we have $\ind(d^* \oplus d^+) = n-1$.

\begin{defn}
	Let $\mathfrak s$ be a torsion spin\textsuperscript{c} structure on $Y = \Sigma_2(S^3, K)$, and let $[\mathfrak a]$ be a critical point on $Y$.
	Let $\Sigma: \text{U}_1 \to K$ be a link cobordism and $W: S^3 \to Y$ be the corresponding branched cover of $[0,1] \times S^3$.
	Let $z$ be a $W$-path (in the sense of \cite[Definition~13.6]{ljk2022}) from $[\mathfrak a_0]$ to $[\mathfrak a]$.
	We define a rational number $\gr^{\mathbb Q}([\mathfrak a])$ by the formula
	\begin{equation*}
		\gr^{\mathbb Q}([\mathfrak a])=
		-\gr_z([\mathfrak a_0], \Sigma, [\mathfrak a])+ \frac{1}{8}\bigg(\langle c_1(\mathfrak s),c_1(\mathfrak s)\rangle - \sigma(W)\bigg) - \iota(\Sigma),
	\end{equation*}
	where $\mathfrak s$ is the spin\textsuperscript{c} structure corresponding to $z$ and $[\mathfrak a_0]$ is the reducible critical point represented by the lowest positive eigenvalue of a perturbed Dirac operator on $S^3 = \Sigma_2(S^3,\text{U}_1)$. Also, $\gr_z([\mathfrak a_0], \Sigma, [\mathfrak a])$ is the index of the perturbed Seiberg-Witten operator asymptotic to $[\mathfrak a_0]$ and $[\mathfrak b]$ in \cite[Definition~12.7]{ljk2022}.
	For reducible critical points, we modify the grading 
	\begin{equation*}
		\bar\gr^{\mathbb Q}([\mathfrak a]) =
		\begin{cases}
			\gr^{\mathbb Q}([\mathfrak a]), & [\mathfrak a] \text{ is boundary-stable},\\
			\gr^{\mathbb Q}([\mathfrak a])-1, & [\mathfrak a] \text{ is boundary-unstable}.
		\end{cases}
	\end{equation*}
\end{defn}
The number $\langle c_1(\mathfrak s),c_1(\mathfrak s)  \rangle =  c_1(\mathfrak s)^2$ is by definition the pairing $\langle c_1(\mathfrak s), \alpha \rangle$ from $H^2(W;\reals) \times H^2(W,\del W;\reals) \to \reals$, where $\alpha$ is a choice of class $H^2(W,\del W;\reals)$ that restricts to $c_1(\mathfrak s)$.
Since $\langle c_1, c_1 \rangle$, $\sigma$, and $\iota$ are additive, and the expression vanishes for closed 4-manifolds, the rational grading $\gr^{\mathbb Q}([\mathfrak a])$ does not depend on the choice of cobordism $\Sigma$ or homotopy class $z$.
Under this convention, $[\mathfrak a_0]$ on $\text{U}_1$ has zero absolute grading.

Alternatively, the absolute grading can be expressed in terms of the expected dimension of Seiberg-Witten moduli space on a 4-manifold bounding $Y$, by gluing a disc to the puncture of cobordism $\Sigma$.
Let $\hat \Sigma$ be capped surface and let $X$ be the double branched cover of $\hat \Sigma$.
\[
\gr^{\mathbb Q}([\mathfrak a]) = -\gr_z(\hat\Sigma^*,[\mathfrak a]) +\frac{1}{8}(c_1(\mathfrak s_X)^2 -\sigma(X)) -\frac{1}{2}(\chi(X) + \sigma(X) - 1 + b_1(Y_+) - b_1(Y_-)).
\]

For a link cobordism $\Sigma: K_- \to K_+$ and $\mathfrak s$ a spin\textsuperscript{c} structure on $W = \Sigma_2([0,1] \times S^3, \Sigma)$, we have the cobordism map
\begin{equation*}
	\HMR^{\circ}_*(\Sigma,\mathfrak s):
	\HMR^{\circ}_*(K_-,\mathfrak s_-) \to 
	\HMR^{\circ}_*(K_+,\mathfrak s_+), 
\end{equation*}
where $\mathfrak s_{\pm}$ is the restriction of $\mathfrak s$ on $\Sigma_2(S^3,K_{\pm}).$
If both $\mathfrak s_-$ and $\mathfrak s_+$ are torsion, then
$\HMR^{\circ}_*(\Sigma,\mathfrak s)$ has a well-defined degree
\begin{equation}
\label{eqn:deg_cob_sigma}
	\frac{1}{8}\left(c_1(\mathfrak s_W)^2  - \sigma(W)\right) - \iota(\Sigma).
\end{equation}
The degree can be rewritten as
\begin{equation*}
	\frac{1}{8}c_1(\mathfrak s_W)^2 - \frac{1}{8}
\left(\sigma(K_+) - \sigma(K_-) - \frac{\Sigma^2}{2}\right)
- \frac{1}{2}\left( \sigma(K_+) - \sigma(K_-) - \frac{\Sigma^2}{2} + b_1(\Sigma) - b_0(\Sigma) + \eta(K_+)- \eta(K_-)\right)
\end{equation*}
where $\eta(K)$ is the nullity of the link (cf. \cite{KauffmanTaylor1976}), which is equal to $b^1$ of its branched double cover.
\subsection{Fr\o yshov invariant}
\hfill \break
For a link $K \subset S^3$ with $\det(K) \ne 0$, the double branched cover is a rational homology sphere.
Given a torsion spin\textsuperscript{c} structure $\mathfrak s$ on $\Sigma_2(S^3,K)$, we have an isomorphism $\overline{\HMR}_{\bullet}(K,\mathfrak s) \cong F_2[\upsilon^{-1},\upsilon]].$
Consider the homomorphism
\begin{equation*}
	i_*:\overline{\HMR}_{\bullet}(K,\mathfrak s)
		\to 
		\widecheck{\HMR}_{\bullet}(K,\mathfrak s).
\end{equation*}
\begin{defn}
	Let $K$ be a link with nonzero determinant and $\mathfrak s$ be a torsion spin\textsuperscript{c} structure.
	The \emph{(real) Froyshov invariant} $h_R(K,\mathfrak s)$ is the number with the property that the element with lowest absolute grading in
	\begin{equation*}
		i_*(\overline{\HMR}_{\bullet}(K,\mathfrak s)) \subset \widecheck{\HMR}_{\bullet}(K,\mathfrak s_0)
	\end{equation*}
	 has $\gr^{\mathbb Q} = -h_R(K,\mathfrak s)$.
\end{defn}
We have the following monotonicity of the real Fr\o yshov invariant.
\begin{prop}
	Let $K_-, K_+$ be two links with nonzero determinants, and $\Sigma:K_- \to K_+$ be a connected cobordism.
	Let $\mathfrak s$ be a real spin\textsuperscript{c} structure on the branched cover of $\Sigma$, restricting to real spin\textsuperscript{c} structure $\mathfrak s_{\pm}$ over branched covers of $K_{\pm}$.
	Suppose the double branched cover along $\Sigma$ is negative-definite, i.e.
	\[
		b_1(\Sigma) - b_0(\Sigma)  + \sigma(K_+) - \sigma(K_-)-\frac{\Sigma \cdot \Sigma }{2}=0.
	\]
	Then
	\[
		h_R(K_-,\mathfrak s_-) \ge h_R(K_+,\mathfrak s_+) + \frac{1}{8}\left(c_1(\mathfrak s)^2  - \sigma(K_-) + \sigma(K_+) + \frac{\Sigma \cdot \Sigma}{2}\right).
	\]
\end{prop}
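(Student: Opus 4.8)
The plan is to descend to the branched double covers and compare the real Fr\o yshov invariants through the cobordism map of $\Sigma$, using the naturality of the exact triangle relating $\overline{\HMR}$, $\widecheck{\HMR}$, $\widehat{\HMR}$ under cobordisms. Write $Y_\pm = \Sigma_2(S^3, K_\pm)$, let $W = \Sigma_2([0,1]\times S^3, \Sigma)$ be the branched double cover, and abbreviate $\bar m = \overline{\HMR}(\Sigma, \mathfrak s)$, $\check m = \widecheck{\HMR}(\Sigma, \mathfrak s)$. Since $K_\pm$ have nonzero determinant and $\Sigma$ is connected, $Y_\pm$ are rational homology spheres, $b_1(Y_\pm) = 0$ and $b_1(W) = 0$, and by hypothesis $b^+(W) = 0$. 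The (anti-)chain map $i\colon \bar C \to \check C$ belongs to the functoriality package of \cite{ljk2022} (as in \cite{KMbook2007}), so the cobordism maps commute with it up to chain homotopy, giving a commuting square
\[
\begin{tikzcd}
\overline{\HMR}_\bullet(K_-,\mathfrak s_-) \ar[r,"i_*"] \ar[d,"\bar m"'] & \widecheck{\HMR}_\bullet(K_-,\mathfrak s_-) \ar[d,"\check m"] \\
\overline{\HMR}_\bullet(K_+,\mathfrak s_+) \ar[r,"i_*"] & \widecheck{\HMR}_\bullet(K_+,\mathfrak s_+)
\end{tikzcd}
\]
of $\mathbb F_2[[U]]$-modules, in which both vertical maps have the same degree $d$ given by \eqref{eqn:deg_cob_sigma}.

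The crux, and the step I expect to be the main obstacle, is to show that $\bar m$ is an isomorphism. Since $\overline{\HMR}$ is built entirely from reducible critical points and reducible trajectories, $\bar m$ is computed from the reducible part of the Seiberg-Witten moduli spaces over $W^*$: a $(-\upiota^*)$-anti-invariant anti-self-dual solution on a weighted Sobolev space, together with a spectral datum for the perturbed Dirac operator. The relevant obstruction space is $H^+(W;\reals)^{-\upiota^*}$, a subspace of $H^+(W;\reals) = 0$; hence the weighted anti-self-dual deformation operator is surjective, the reducible moduli spaces are regular of the expected dimension, and the count defining $\bar m$ reduces to the purely formal reducible Floer theory. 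Running the analysis of \cite[\S 24]{KMOS2007} (the negative-definite cobordism case treated there for $\HM$, and in \cite{KMbook2007}) in the real setting, the lowest-dimensional reducible moduli space relevant to $\bar m$ is a single transversely cut-out point, so $\bar m$ carries each generator $[\mathfrak a_i]$ to $[\mathfrak b_{i+c}]$ plus terms of higher $U$-order, for a uniform shift $c$. As $\mathfrak s_\pm$ are torsion, $\overline{\HMR}_\bullet(K_\pm,\mathfrak s_\pm) \cong \mathbb F_2[\upsilon^{-1},\upsilon]]$, which forces $\bar m$ to be an isomorphism. (This is the real analogue of the fact that a negative-definite cobordism induces an isomorphism on $\overline{\HM}$.) Carrying out the moduli-space construction over the branched double cover $W$ and verifying transversality and this point count — exactly the input already needed for the monopole surgery exact triangle — is the technical heart.

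Granting that $\bar m$ is an isomorphism, the remainder is formal. For $\omega \in \overline{\HMR}_\bullet(K_+,\mathfrak s_+)$ write $\omega = \bar m(\omega')$; the square gives $\check m(i_*\omega') = i_*\bar m(\omega') = i_*\omega$, so the restriction
\[
\check m\colon i_*\!\left(\overline{\HMR}_\bullet(K_-,\mathfrak s_-)\right) \longrightarrow i_*\!\left(\overline{\HMR}_\bullet(K_+,\mathfrak s_+)\right)
\]
is surjective. Both sides are $\mathbb F_2[[U]]$-submodules of $\widecheck{\HMR}_\bullet$ bounded below in absolute grading, with minimal gradings $-h_R(K_-,\mathfrak s_-)$ and $-h_R(K_+,\mathfrak s_+)$ respectively, by the definition of the real Fr\o yshov invariant; and $\check m$ shifts the absolute grading by $d$. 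A degree-$d$ surjection out of a module with minimal grading $-h_R(K_-,\mathfrak s_-)$ has image in gradings $\ge -h_R(K_-,\mathfrak s_-) + d$, whence $-h_R(K_+,\mathfrak s_+) \ge -h_R(K_-,\mathfrak s_-) + d$, i.e. $h_R(K_-,\mathfrak s_-) \ge h_R(K_+,\mathfrak s_+) + d$.

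It remains to identify $d$. By \eqref{eqn:deg_cob_sigma}, $d = \tfrac18\big(c_1(\mathfrak s)^2 - \sigma(W)\big) - \iota(\Sigma)$. The negative-definiteness hypothesis, together with $\chi(W) = b_1(\Sigma) - b_0(\Sigma)$ and $\sigma(W) = \sigma(K_+) - \sigma(K_-) - \tfrac12\Sigma\cdot\Sigma$, says precisely that $\chi(W) + \sigma(W) = 0$; since also $b_1(Y_\pm) = 0$, this yields $\iota(\Sigma) = \tfrac12\big(\chi(W) + \sigma(W) + b_1(Y_+) - b_1(Y_-)\big) = 0$, so $d = \tfrac18(c_1(\mathfrak s)^2 - \sigma(W))$. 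Substituting the formula for $\sigma(W)$ then identifies $d$ with the quantity added to $h_R(K_+,\mathfrak s_+)$ in the statement, completing the proof.
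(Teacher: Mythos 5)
Your proposal is correct and takes essentially the same route as the paper's proof: the same commuting square of $i_*$ with the cobordism maps, the identification of the $\overline{\HMR}$-level map as an isomorphism (a power of $\upsilon$ of degree $\tfrac18(c_1(\mathfrak s)^2-\sigma(W))$) coming from the unique reducible ASD solution on the negative-definite branched cover $W^*$, and the same grading-shift conclusion. The only cosmetic difference is that the paper chases the minimal-grading element of $i_*(\overline{\HMR}_{\bullet}(K_+,\mathfrak s_+))$ directly rather than phrasing the last step as surjectivity of $\check m$ on the images of $i_*$.
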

\begin{proof}
	The proof is unchanged from the usual Fr\o yshov inequality (see e.g. \cite{KMbook2007, FranLin2018}) for a fixed spin\textsuperscript{c} structure, which we will sketch now.
	Consider the commutating square
	\[\begin{tikzcd}
		\overline{\HMR}_{\bullet}(K_-,\mathfrak s_-)
		\ar[r] 
		\ar[d,"i_*"] 
		& 
		\overline{\HMR}_{\bullet}(K_+,\mathfrak s_+) 
		\ar[d,"i_*"]
		\\
		\widecheck{\HMR}_{\bullet}(K_-,\mathfrak s_-) 
		\ar[r] 
		& 
		\widecheck{\HMR}_{\bullet}(K_+,\mathfrak s_+) 
	\end{tikzcd}\]
	where the horizontal maps are given by the cobordism maps of $(\Sigma,\mathfrak s)$.
	The top row can be identified with $\upsilon^{-(c_1(\mathfrak s)^2 - \sigma(W))/8}$
	\[\begin{tikzcd}
	\mathbb F_2[\upsilon^{-1},\upsilon]] \ar[rrr,"\upsilon^{-(c_1(\mathfrak s)^2 - \sigma(W))/8}"]	& {} & {} & \mathbb F_2[\upsilon^{-1},\upsilon]],
	\end{tikzcd}\]
	where $W$ is the branched cover of $\Sigma$.
	Indeed, since $W$ is negative definite, there exists a unique ASD spin\textsuperscript{c} connection on the manifold $W^*$ with cylindrical ends.
	For any $j_-,j_+ \in \mathbb Z$, the compactified reducible moduli space
	\[
		M^{\red,+}_z(\mathfrak a_{j_-}, \Sigma,\mathfrak b_{j_+})
	\]
	from critical points $\mathfrak a_{j_-}$ to $\mathfrak b_{j_+}$ is either empty or a real projective space, so the resulting map is a power of $\upsilon$. 
	By Equation~\eqref{eqn:deg_cob_sigma}, the power is $-(c_1(\mathfrak s)^2 - \sigma(W))/8$.
	If $x$ is an element in 	$ \overline{\HMR}_{\bullet}(K_+,\mathfrak s_+) $ such that $i_*(x) \ne 0$ achieves the minimal degree, then 
	\[
		\upsilon^{(c_1(\mathfrak s)^2 - \sigma(W))/8} x
	\]
	is mapped under $i_*$ to an nonzero element in $\widecheck{\HMR}_{\bullet}(K_-,\mathfrak s_-) $, so 
	\[
		-h_R(K_-,\mathfrak s_-) \le - h_R(K_+, \mathfrak s_+) - \frac{1}{8}(c_1(\mathfrak s)^2 - \sigma(W)). \qedhere
	\]
\end{proof}
\begin{rem}
	One can define more generally an absolute grading for real 3-manifolds $(Y,\upiota)$ and Fr\o yshov invariant $h_R(Y,\upiota,\mathfrak s)$.
	The analogue of negative-definiteness for a real cobordism $(W,\upiota_W):(Y_-,\upiota_-) \to (Y_+,\upiota_+)$ is the condition
	\[
		b^+_{-\upiota^*}(W) = 0.
	\]
\end{rem}
\subsection{Some calculations}
\hfill \break
A priori, the real absolute grading requires a $4$-manifold bounding $\Sigma_2(S^3,K)$ equivariantly.
In some cases, the real grading can be directly deduced from the ordinary grading. 
Following \cite{LinLipnowski2022spec}, we introduce the notion of a real minimal L-space.
\begin{defn}
	A rational homology $3$-sphere with involution $(Y,\upiota)$ is a \emph{real minimal $L$-space}, if for some $\upiota$-invariant Riemannian metric, there exists no real irreducible critical point to the perturbed CSD gradient.
	Similarly, a link $K$ is \emph{minimal $L$-link}, if its branched double cover is a real minimal $L$-space.
\end{defn} 
Examples of minimal $L$-links include the 2-bridge knots and the $(p,q)$-torus knots where $p,q \ge 3$ are coprime.
The first family are branched covered by lens spaces and can be equipped with invariant positive scalar curvature metrics \cite[Section~14]{ljk2022}, from which we deduce no irreducible critical points for small perturbations.
The second family give rise to a family of Brieskorn spheres $\{\Sigma(2,p,q)\}$ as branched double covers, and one can argue directly no irreducible solutions to perturbed Seiberg-Witten equations exist for Seifert metrics \cite[Section~14.5]{ljk2022}.

Suppose $K$ is a minimal $L$-link and let $\mathfrak s$ be a real spin\textsuperscript{c} structure. Then $j_* = 0$ and the long exact sequence becomes a short exact sequence
\begin{equation*}
	\begin{tikzcd}
		0	 \ar[r] 
		& \widehat{\HMR}_{\bullet}(K,\mathfrak s) \ar[r,"p_*"]
		& \overline{\HMR}_{\bullet}(K,\mathfrak s) \ar[r,"i_*"]
		& \widecheck{\HMR}_{\bullet}(K,\mathfrak s) \ar[r]
		& 0.
	\end{tikzcd}
\end{equation*}
The following proposition applies to links whose branched cover are minimal $L$-spaces both in the real and ordinary sense.
\begin{prop}
	Suppose $\Sigma_2(S^3,K)$ is a real minimal $L$-space and ordinary minimal $L$-space, for the same choice of invariant Riemannian metric.
	Then $h_R(K,\mathfrak s) = h(\Sigma_2(K,\mathfrak s))$
\end{prop}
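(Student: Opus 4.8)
The plan is to compare the two Fröyshov invariants directly through their definitions, exploiting the hypothesis that the \emph{same} invariant metric witnesses both the real and the ordinary minimal $L$-space property. Under this hypothesis both the real and the ordinary Seiberg–Witten equations on $\Sigma_2(S^3,K)$ have, after a small perturbation, only reducible critical points; the critical points are indexed by eigenvalues of a perturbed Dirac operator (in the real case the $\uptau$-invariant Dirac operator, in the ordinary case the full Dirac operator). First I would recall from Example~\ref{exmp:U1} and the argument of \cite[Section~14]{ljk2022} that in the absence of irreducibles all differentials in both the real and ordinary theories vanish for positive-scalar-curvature-like reasons (more precisely, because the only trajectories are reducible and the reducible moduli spaces between consecutive critical points are even-dimensional projective spaces, cf.\ \cite[Proposition~14.6.1]{KMbook2007}), so that in each case $\overline{\HMR}_\bullet$ (resp.\ $\overline{\HM}_\bullet$) is a free rank-one module over $\mathbb F_2[\upsilon^{-1},\upsilon]]$ (resp.\ over $\mathbb F_2[U^{-1},U]]$), and $i_*$ is injective with image the span of all critical points of non-negative ``level''. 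Consequently $h_R(K,\mathfrak s)$ equals minus the absolute $\mathbb Q$-grading of the lowest reducible critical point $[\mathfrak a_0^R]$ that survives in the image of $i_*$, and similarly $h(\Sigma_2(S^3,K),\mathfrak s)$ is minus the grading of the corresponding lowest ordinary reducible $[\mathfrak a_0]$.

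The heart of the argument is then an identification of the real absolute grading $\gr^{\mathbb Q}$ of this distinguished real reducible critical point with the ordinary absolute grading of its ordinary counterpart. I would use the ``capped'' formula for $\gr^{\mathbb Q}$ recorded in the excerpt, namely
\[
\gr^{\mathbb Q}([\mathfrak a]) = -\gr_z(\hat\Sigma^*,[\mathfrak a]) +\tfrac{1}{8}\bigl(c_1(\mathfrak s_X)^2 -\sigma(X)\bigr) -\tfrac{1}{2}\bigl(\chi(X) + \sigma(X) - 1\bigr),
\]
for $X$ the branched double cover of a capped surface $\hat\Sigma:\mathrm U_1 \to K$ in $S^4$, together with the analogous Kronheimer–Mrowka formula for the ordinary absolute grading on $X$ viewed as a spin\textsuperscript{c} filling of $\Sigma_2(S^3,K)$. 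The key geometric input is that over such a branched cover $X$ the reducible Seiberg–Witten moduli space and its index split as a sum of a $\upiota^*$-invariant part (computing an index over the quotient $X/\upiota_X$, hence a purely topological quantity independent of the real structure) and a $(-\upiota^*)$-anti-invariant part; the real index $\gr_z(\hat\Sigma^*,[\mathfrak a])$ is precisely the anti-invariant contribution, and the $\iota(\Sigma)$ correction term in the real grading is exactly engineered (via the computation $-\iota(W) = (n-1) - \iota(\Sigma)$ in the excerpt) so that the two topological pieces reassemble into the ordinary expected dimension. I would push this bookkeeping through to conclude that for the distinguished reducibles, $\gr^{\mathbb Q}_{\mathrm{real}}([\mathfrak a_0^R]) = \gr^{\mathbb Q}_{\mathrm{ordinary}}([\mathfrak a_0])$; since both Fröyshov invariants are the negatives of these gradings, the claimed equality $h_R(K,\mathfrak s) = h(\Sigma_2(S^3,K),\mathfrak s)$ follows.

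The step I expect to be the main obstacle is matching up \emph{which} reducible critical point is ``lowest in the image of $i_*$'' on the two sides. In the ordinary theory the reducibles come in a single $\upsilon^2 = U$-tower with gradings spaced by $2$; in the real theory the $\uptau$-invariant Dirac operator has its own (simple) spectrum and the reducibles form a $\upsilon$-tower with gradings spaced by $1$ (as emphasized in Example~\ref{exmp:U2}). A priori the real spectrum and the restriction of the ordinary spectrum to the $\uptau$-invariant subspace need not look alike, so one must argue that the \emph{first non-negative-level} reducible — equivalently, the generator of $i_*(\overline{\HMR}_\bullet)$ of lowest grading — is governed by the same crossing data (a spectral flow / index-theoretic count along a path of operators on a filling $X$) in both settings, and that the minimal-$L$-space hypothesis for the common metric forces no eigenvalue crossings on the $(-\upiota^*)$-part that are not already accounted for. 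I would handle this by running the standard argument identifying $h$ with a spectral-flow count for a metric with no irreducibles (as in \cite[Section~14]{ljk2022}, \cite{FranLin2018}) simultaneously in the equivariant and non-equivariant pictures, and invoking the invariance of $\gr^{\mathbb Q}$ under the choice of cobordism to reduce everything to the single filling $X$; once both invariants are expressed as the same index on $X$, equality is immediate.
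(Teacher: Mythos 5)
Your overall strategy is the same as the paper's (reduce both invariants to the grading of the bottom boundary-stable critical point, then compute that grading via a capped surface $\hat\Sigma$, its branched double cover $X$, and an index computation at the unique reducible on $X$), but the key bookkeeping step, as you state it, is wrong. The identity you propose to "push through," namely $\gr^{\mathbb Q}_{\mathrm{real}}([\mathfrak a_0^R]) = \gr^{\mathbb Q}_{\mathrm{ordinary}}([\mathfrak a_0])$, is false whenever $h\neq 0$ (e.g.\ for the rational knots of Example~\ref{exmp:rational} with $h(L(p,q))\neq 0$). What the paper actually shows is that the real grading of the bottom element is $-\ind_{\C,L^2(X)}(D_A^+)+\tfrac18\bigl(c_1(\mathfrak s_X)^2-\sigma(X)\bigr)$, while the ordinary grading of the corresponding bottom element is $-2\ind_{\C,L^2(X)}(D_A^+)+\tfrac14\bigl(c_1(\mathfrak s_X)^2-\sigma(X)\bigr)$ --- exactly twice as large. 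The invariants nevertheless agree because the normalizations differ: $-h_R$ is the real grading of the bottom element, whereas $-2h$ (not $-h$) is its ordinary grading in the convention used here (the paper cites this explicitly). So your closing claim that "both Fr\o yshov invariants are the negatives of these gradings" is also off by a factor of two on the ordinary side; your argument reaches the correct conclusion only because these two factor-of-two errors cancel.

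The source of the discrepancy is the Dirac part of the index, which your sketch does not treat correctly. Splitting into $\upiota^*$-invariant and anti-invariant pieces is right for the form (ASD) part of the linearization, and that is where the $\iota(\Sigma)$ correction lives; but the real index is not "the anti-invariant contribution" of the ordinary index. Because $\uptau$ is anti-linear (it anti-commutes with multiplication by $\sqrt{-1}$), the $\uptau$-invariant spinors form a real form of the full spinor space, so the real index of the Dirac operator equals the full complex index $\ind_{\C,L^2(X)}(D_A^+)$, whereas the ordinary expected dimension contains $2\ind_{\C,L^2(X)}(D_A^+)$. This is precisely the factor of $1/2$ the paper isolates, and it is what makes the $\tfrac18$ versus $\tfrac14$ coefficients and the $-h_R$ versus $-2h$ normalizations match up. (Your third-paragraph worry about comparing the real and ordinary Dirac spectra is not actually needed: in each theory one only uses that, with no irreducibles, the bottom of the image of $i_*$ is the first boundary-stable point, whose grading is computed from the unique reducible on $X$; the two perturbations need not be related.) With the Dirac contribution handled as above and the correct normalization of $h$, your outline becomes the paper's proof.
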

\begin{proof}
	We assume a small real perturbation is chosen so that there is no irreducible critical point and the spectrum of the Dirac operator is simple.
	Let $[\mathfrak b_0]$ be the boundary-stable critical point corresponding to the smallest positive eigenvalue of the perturbed Dirac operator.
	Then $[\mathfrak b_0]$ realizes the lowest rational absolute grading in $\widecheck{\HMR}_{\bullet}(K,\mathfrak s)$, given by
	\begin{equation*}
		-\gr(\hat \Sigma, [\mathfrak b_0]) + \frac{1}{8}(c_1(\mathfrak s_X)^2 - \sigma(X)) -\frac{1}{2}(\chi(X) + \sigma(X) - 1 ),
	\end{equation*}
	where $\hat \Sigma$ is a surface in $B^4$ bounding $K$ and $(X,\upiota_X)$ is the corresponding branched double cover.
	On the other hand, the expected dimension can be expressed in terms of \emph{real} indices of the linearized Seiberg-Witten operator
	\begin{equation*}
		\gr(\hat \Sigma,[\mathfrak b_0]) = 
		\ind^{-\upiota_X^*}_{L^2(X)}(d+d^*) +
		\ind_{\mathbb C, L^2(X)}(D_A^+),
	\end{equation*}
	where the first term is the $(-\upiota_X^*)$-invariant part of the $L^2$-index and the second term is the non-invariant complex index of the twisted Dirac operator.
	In particular, we do not need the equivariant index of the Dirac operator because the involutive lift of $\upiota_X$ on the spinor bundle anti-commutes with multiplication by $\sqrt{-1}$.
	Since $X$ is a cover of $B^4$, we see that
	\[
		\ind^{-\upiota_X^*}_{L^2(X,\iota_X)}(d+d^*)
		= \ind_{L^2(X)}(d+d^*) + 1
		= -\frac{1}{2}(\chi(X) + \sigma(X)+ 1) +1,
	\]
	and so
	\begin{equation}
	\label{eqn:hR1}
		-h_R(K,\mathfrak s) = - \ind_{\C,L^2(X)}(D_A^+) + \frac{1}{8} (c_1(\mathfrak s_X)^2 - \sigma(X)) 
	\end{equation}
	Since $Y = \Sigma_2(S^3,K)$ is assumed to be a minimal $L$-space itself, the lowest grading is also achieved by a critical point represented by the smallest eigenvalue of the perturbed Dirac operator (with different perturbations).
	The absolute grading is (cf. \cite[Section~39]{KMbook2007} and see \cite{LinLipnowski2022} for the definition of the Fr\o yshov invariant)
	\begin{align}
		-2h(Y,\mathfrak s) &=
		-\gr(X, [\mathfrak b_0]) + \frac{1}{4}(c_1(\mathfrak s_X)^2 - \sigma(X)) -\frac{1}{2}(\chi(X) + \sigma(X) + 1 )\\
		\label{eqn:h1}
		&= - 2\ind_{\C,L^2(X)}(D_A^+) +\frac{1}{4}(c_1(\mathfrak s_X)^2 - \sigma(X)).
	\end{align}
	From equations~\ref{eqn:hR1} and ~\ref{eqn:h1}, we obtain the equality of the Fr\o yshov inequalities.
\end{proof}
\begin{cor}
	If $K = K(p,q)$ is the $(p,q)$-rational knot branched covered by the lens space $L(p,q)$. Then $h_R(K,\mathfrak s) = h(L(p,q))$.
\end{cor}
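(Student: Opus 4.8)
The plan is to deduce this corollary directly from the preceding proposition by verifying its two hypotheses for $K = K(p,q)$. As recalled in Example~\ref{exmp:rational}, the double branched cover of the $(p,q)$-rational knot is the lens space $L(p,q)$, presented as the quotient of the unit sphere $S^3 \subset \C^2$ by a finite cyclic group, with covering involution $\upiota_{deck}$ induced by complex conjugation $(z_1,z_2) \mapsto (\bar z_1, \bar z_2)$. The round metric on $S^3$ descends to a $\upiota_{deck}$-invariant metric $g$ on $L(p,q)$ of positive scalar curvature, and this single metric $g$ is the one I would feed into \emph{both} the real and the ordinary Seiberg--Witten theories.

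First I would check that $(L(p,q),\upiota_{deck})$ is a real minimal $L$-space for the metric $g$: by the Weitzenb\"ock formula, positive scalar curvature forces the unperturbed Dirac operator to have trivial kernel, so every solution to the unperturbed equations is reducible, and by the usual compactness argument this persists under any sufficiently small perturbation --- in particular under a small real perturbation achieving transversality of the real moduli spaces. Hence there are no real irreducible critical points of the perturbed CSD gradient; this is precisely the assertion already made in the text that $2$-bridge knots are minimal $L$-links. Running the identical argument for the ordinary, non-equivariant perturbed Seiberg--Witten equations on $L(p,q)$, with a possibly different small perturbation, shows that the \emph{same} metric $g$ also exhibits $L(p,q)$ as an ordinary minimal $L$-space --- this is just the classical fact that lens spaces are $L$-spaces. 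With both hypotheses verified for the common metric $g$, the preceding proposition yields $h_R(K(p,q),\mathfrak s) = h(\Sigma_2(S^3,K(p,q)),\mathfrak s) = h(L(p,q),\mathfrak s)$ for each torsion spin\textsuperscript{c} structure $\mathfrak s$.

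The one point requiring a little care is the phrase ``for the same choice of invariant Riemannian metric'' appearing in the hypothesis of the previous proposition: one must ensure a \emph{single} metric kills all irreducibles in both theories at once, and this is exactly what positive scalar curvature supplies, the two auxiliary perturbations being harmless. Beyond this bookkeeping I do not anticipate any genuine obstacle, since all the substantive work already lives in the previous proposition, whose proof matches the real index formula for $\gr^{\mathbb Q}$ against the complex index formula for the ordinary Fr\o yshov grading on the branched double cover $X = \Sigma_2(B^4,\hat\Sigma)$ of a spanning surface for $K$.
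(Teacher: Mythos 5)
Your argument is correct and is exactly the route the paper intends: the corollary is an immediate application of the preceding proposition, with both the real and ordinary minimal $L$-space hypotheses verified for the single $\upiota$-invariant positive scalar curvature metric on $L(p,q)$ descended from the round metric on $S^3$ (as already noted in the paper's discussion of $2$-bridge knots and Example~\ref{exmp:rational}). No genuinely different ideas are involved, and your extra care about using one metric for both theories is the right bookkeeping point.
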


\section{The Topology of Unoriented Skein Cobordisms}
\label{sec:top_skein}
\subsection{Skein triangle}
\hfill \break
Let us begin with the orbifold picture, described in \cite{KMunknot2011}.
Let $\mathbb Y = \mathbb Y_i = S^3$, and let $K_i \subset \mathbb Y_i$ be an unoriented skein triple of links.
The three links $\{K_2, K_1, K_0\}$ differ inside a small $3$-ball $B^3$.
There is an order-$3$ symmetry on $B^3$ containing two arcs, thought of as tetrahedra, illustrated in Figure~\ref{fig:arcsintetra}.
We extend $K_i$ to all $i \in \mathbb Z$ three-periodically.
\begin{figure}[hbt]
  \includegraphics[height=1.5in]{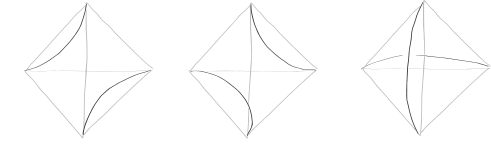}
  \caption{Arcs in tetrahedra $B^3$, with $\mathbb Z_3$-symmetry.}
  \label{fig:arcsintetra}
\end{figure}

To obtain a cobordism from the link $K_i$ to $K_{i-1}$ we glue $([0,1] \times B^3, T_{i,i-1})$ to the product surface $[0,1] \times (K_i - B^3)$, where $T_{i,i-1}$ is a rectangle bounding $B^3 \cap K_i$ and $B^3 \cap K_{i-1}$.
For arbitrary $j \le i$, we define the cobordism $\Sigma_{ij}$ by composing $\Sigma_{i,i-1}, \dots, \Sigma_{j+1,j}$.
\begin{figure}[hbt]
  \includegraphics[height=1.7in]{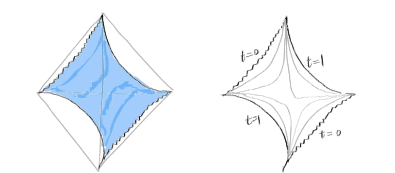}
  \caption{The rectangle $T_{i,i-1}$}
\end{figure}

We restate the main theorem as follows.
\begin{thm}
	For $\circ \in \{\vee, \wedge, -\}$, we have an isomorphism of real monopole Floer homology groups
	\begin{equation*}
		\HMR^{\circ}_{\bullet}(K_2) \cong H_{\bullet}(\text{Cone}(\HMR^{\circ}_{\bullet}(K_1) \to \HMR^{\circ}_{\bullet}(K_0))),
	\end{equation*}
	where the right hand side is the mapping cone of $\HMR^{\circ}(\Sigma_{10})$.
\end{thm}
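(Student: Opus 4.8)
The plan is to adapt the Kronheimer--Mrowka--Ozsv\'{a}th--Szab\'{o} proof of the monopole surgery exact triangle \cite{KMOS2007} (see also \cite{Bloom2011}) to the branched double covers, carrying out every moduli-space argument equivariantly with respect to the deck involution and its real lift. Extend the skein triple and the saddle cobordisms three-periodically as in Section~\ref{sec:top_skein}, and write $f_i = \HMR^{\circ}(\Sigma_{i,i-1}) \colon C_i \to C_{i-1}$ for the induced chain maps between the completed Floer chain complexes of the $K_i$, so that $C_i = C_{i-3}$. The first step is homological algebra: I will use the mapping-cone lemma of \cite{KMOS2007}, which says that given chain homotopies $J_i \colon C_i \to C_{i-2}$ with $f_{i-1}f_i = \del J_i + J_i \del$, and such that the secondary composite $\psi_i := f_{i-2} J_i + J_{i-1} f_i \colon C_i \to C_{i-3} = C_i$ is a chain homotopy equivalence, the mapping cone of $f_i$ is homotopy equivalent to $C_{i-2}$ (up to an overall grading shift). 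Taking $i = 1$ reduces the theorem to constructing the null-homotopies $J_i$ and checking that the $\psi_i$ are equivalences.

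\textbf{Vanishing of the two-step composite.} By the composition law, $f_{i-1} f_i$ is computed, with a long neck inserted, by the branched double cover $W_{i,i-2}$ of the composite saddle cobordism $\Sigma_{i,i-2}$. As established in Section~\ref{sec:top_skein}, $W_{i,i-2}$ contains an essential embedded $2$-sphere of self-intersection $-1$ that can be taken invariant under the deck involution; that is, $W_{i,i-2}$ is an equivariant blow-up. Following \cite{KMOS2007}, I define $J_i$ by counting $\uptau$-invariant Seiberg--Witten solutions over $W_{i,i-2}^{*}$ with respect to a one-parameter family of invariant metrics and perturbations that stretches the neck around this exceptional sphere: at one end of the family the parametrized moduli space degenerates to the broken trajectories computing $f_{i-1}f_i$, while at the other end the relevant moduli spaces contribute nothing, because the reflection in the exceptional class acts freely on the contributing spin\textsuperscript{c} structures and preserves the count, so contributions cancel in pairs over $\mathbb{F}_2$. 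The ends of this one-parameter moduli space therefore yield $f_{i-1} f_i = \del J_i + J_i \del$.

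\textbf{The secondary composite is an equivalence.} The map $\psi_i$ is computed from $W_{i,i-3}$, the branched double cover of the composite of all three saddle cobordisms --- a cobordism from $Y_i$ to itself, since $K_{i-3} = K_i$ --- carrying the one-parameter families of invariant metrics supplied by the two occurrences of $J$. By Section~\ref{sec:top_skein}, $W_{i,i-3}$ is equivariantly diffeomorphic to the product cobordism on $Y_i$ modified by blow-ups, and the analysis of \cite{KMOS2007}, performed for $\uptau$-invariant solutions, identifies the reducible part of $\psi_i$ with the identity on $C_i$ up to terms that strictly lower a grading --- powers of the variable $U$ coming from the extra summands --- and hence nilpotent; thus $\psi_i$ is a chain homotopy equivalence. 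The grading shifts of all maps are computed from \eqref{eqn:deg_cob_sigma} and the topological formulae of Section~\ref{sec:Froyshov}, so the resulting triangle is internally consistent as a triangle of $\mathbb{J}(\mathfrak s)$-graded groups. The three flavours $\circ \in \{\vee,\wedge,-\}$ are handled simultaneously by performing the construction in the blown-up configuration space, so that the homotopies are restrictions of a single homotopy of the cobordism-map package $(\bar m,\check m,\hat m)$ and the maps $i_*, j_*, p_*$ relating the three triangles are respected; alternatively the $\widehat{\HMR}$ and $\widecheck{\HMR}$ triangles can be deduced from the $\overline{\HMR}$ triangle through the long exact sequence.

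The main obstacle --- and essentially the only point beyond \cite{KMOS2007} --- is the equivariant moduli-space analysis in the two steps above: one must check that the families of metrics and perturbations used in the neck-stretching arguments can be chosen invariantly under the deck involution while still cutting out the $\uptau$-invariant moduli spaces transversally (using the real transversality and compactness results of \cite{ljk2022}); that the reducible loci over $W_{i,i-2}$ and $W_{i,i-3}$ decompose into $(\pm)$-eigenspaces of the involution in a way that reproduces the reducible counts of \cite{KMOS2007} on the quotient, up to the real index shift; and that the exceptional spheres and the product degeneration are themselves equivariant, so the blow-up vanishing and the ``identity plus nilpotent'' conclusions survive the passage to real structures. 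Granting these compatibilities, the remainder is formal, from the functoriality and composition law for $\HMR^{\circ}$ of \cite{ljk2022} together with the homological algebra of \cite{KMOS2007}.
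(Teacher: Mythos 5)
Your overall strategy coincides with the paper's: the KMOS triangle-detection lemma, chain maps $f_i$ from the saddle cobordisms, a null-homotopy $H_i$ of $f_{i-1}f_i$ built from a one-parameter family of invariant metrics on the double composite that stretches along the sphere bounding a neighbourhood of the exceptional class at one end (where the moduli spaces on the $\overline{\mathbb{CP}}^2$-summand cancel in conjugate pairs mod $2$ --- in the real setting the pairing $\mathfrak t_k\leftrightarrow\mathfrak t_{-1-k}$ is implemented by the deck transformation, plus a PSC metric to kill two exceptional zero-dimensional moduli spaces) and along the middle link at the other end. Up to that point your sketch tracks the paper's Subsections on $H$ faithfully.

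The gap is in the step you call ``the secondary composite is an equivalence,'' which is exactly where the paper has to do work that is not a verbatim transcription of \cite{KMOS2007}. First, you never introduce the two-parameter (pentagon) family of metrics on the triple composite: neither in \cite{KMOS2007} nor here does one show directly that $\psi_i=f_{i-2}H_i+H_{i-1}f_i$ is ``identity plus nilpotent.'' One constructs a \emph{second} homotopy $G$ from the pentagon family, whose boundary edges produce the identity relating $\psi_i$, up to $\check\del$-commutators, to an explicit map $\check L$ obtained by inserting the element $\bar n_u$ (counted on the branched cover $N$ of $(\mathbb B_{30},M_{30})$ over the edge $\bar Q(\mathbb S_{30})$) into the three-ended cobordism $\mathbb U^*$; only this $\check L$ is then analyzed. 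Your one-parameter description of $\psi_i$ skips this intermediate object entirely. Second, the assertion that the KMOS reducible analysis ``performed for $\uptau$-invariant solutions'' gives identity plus powers of $U$ is precisely what fails to transfer: in the real theory the grading gaps in the reducible towers over $S_{30}\cong S^1\times S^2$ are $1$ rather than $2$, so the parity argument that in \cite{KMOS2007} excludes one tower breaks down and both towers $\{\mathfrak a^1_i\},\{\mathfrak a^0_i\}$ contribute to $\bar n_u$; one must separately prove $\bar n_s=0$ (Corollary~\ref{cor:M30_dim}); and the leading coefficient of $\check L$ is not a single count but the sum over a conjugate pair of real spin\textsuperscript{c} structures, $a^1_0+a^1_{-1}$, whose oddness requires a new mod-$2$ degree argument gluing $\theta_k$ and $\theta_{-1-k}$ into a cycle on the circle of real flat connections (Lemma~\ref{lem:sumis1}), together with the identification $\check L^1_{-1}\simeq\mathrm{id}$ by excising $N$ from $W_{30}$ and regluing $S^1\times B^3$ (Lemma~\ref{lem:L1-1}). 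Note also that $W_{30}$ is not the cylinder modified by blow-ups --- $N$ contains two $(-1)$-spheres meeting in a point, and the cylinder is recovered only after the excision just described --- and the error terms are insertions $\check L^{\mu}_{-j}$ of lower critical points, not literal powers of $U$, though the completed-filtration invertibility argument you gesture at is the right one. Your closing paragraph defers these points as ``compatibilities to check,'' but they constitute the substance of Proposition~\ref{prop:Lisom} and hence of the proof, rather than routine equivariant bookkeeping.
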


There is an arc $\delta_i \subset B^3$ lying on $\Sigma_{i,i-1}$ at time $\frac12$, connecting the two components of the link in $B^3$, see Figure~\ref{fig:delta_arc}.
We form a M\"obius band $M_{i,i-2} \subset [0,2] \times \mathbb Y$ by taking the union of 
\begin{itemize}
	\item neighbourhoods (in $\Sigma_{i,i-2}$) of two arcs $\delta_{i+1}$ and $\delta_{i}$  at time $\frac12$ and $\frac32$, and
	\item two bands obtained from neighbourhoods of $(\delta \cap K_i) \subset K_i$, in time $[\frac12,\frac32]$.
\end{itemize}
Each M\"obius band has self-intersection $(+2)$ by \cite[Lemma~7.2]{KMunknot2011}.
\begin{figure}[hbt]
  \includegraphics[height=1.5in]{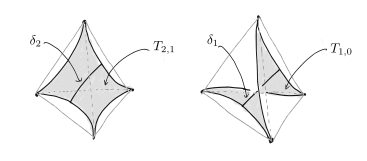}
  \caption{The arc $\delta_i \subset \Sigma_{i,i-1}$.}
  \label{fig:delta_arc}
\end{figure}
We glue the two M\"obius bands $M_{i,i-2}$ and $M_{i-1,i-3}$ to obtain $M_{i,i-3}$, which topologically is twice-punctured $\mathbb{RP}^2$.
%In particular, the branched cover of $[0,2] \times S^3$ along $M_{i,i-3}$ is 
\begin{figure}[hbt]
  \includegraphics[height=1.5in]{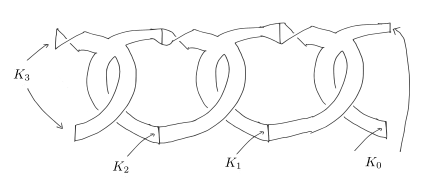}
  \caption{Gluing M\"obius bands.}
\end{figure} 

Let $\mathbb B_{i,i-2}$ be a regular neighbourhood (diffeomorphic to a $4$-ball) of $[\frac12,\frac32] \times \delta_{i,i-2}$ in $[\frac12,\frac32] \times B^3$.
Let $\mathbb B_{i,i-3}$ be the regular neighbourhood of the union $\mathbb B_{i,i-2} \cup \mathbb B_{i-1,i-3}$ which is again a $4$-ball and contains $M_{i,i-3}$.
Let $\mathbb S_{31}, \mathbb S_{20}, \mathbb S_{30}$ be the boundary $3$-spheres of $\mathbb B_{31}, \mathbb B_{20},$ and $\mathbb B_{30}$ respectively.
The sphere $\mathbb{S}_{i,i-1}$ intersects the M\"obius band $M_{i,i-1}$ at an unknot, and $\mathbb{S}_{30}$ intersects the twice-punctured $\mathbb{RP}^2$ at a 2-component unlink $\text{U}_2$. 
%We summarize our definitions as follows.
%\begin{itemize}
%	\item $\mathbb Y_i$ (3-sphere)
%	\item $\Sigma_{ij}: K_i \to K_j$ (skein cobordisms)
%	\item $\delta_{i,i-2} \subset B^3$ (arc)
%	\item $M_{31},M_{20}$ (Mobius bands)
%	\item $M_{30}$ (twice punctured $\mathbb{RP}^2)$
%	\item $\mathbb B_{31},\mathbb B_{20}$ (4-balls)
%	\item $\mathbb B_{30}$ (4-ball)
%	\item $\mathbb S_{31}, \mathbb S_{20}$ (3-spheres)
%	\item $\mathbb S_{30}$ (3-sphere)
%\end{itemize}
%\begin{figure}[hbt]
%  \includegraphics[width=2.8in]{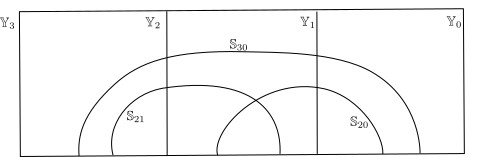}
%  \caption{}
%\end{figure}

\subsection{The double branched covers of the skein triangle}
\hfill \break
We introduce some notations for the double branched covers of what appeared in the previous subsection.
Let $Y_i$ be the branched cover of $(\mathbb Y_i,K_i)$, and $W_{ij}:{Y}_{i} \to {Y}_{j}$  be the branched cover of $([0,j-i]\times \mathbb Y,\Sigma_{ij})$.
%(The comma ``,'' between $i$ and $j$ will be omitted if there is no confusion.)
Let $U_{30}$ be the branched cover of $(\mathbb B_{30}, M_{30})$. 

Let $S_{i,i-2}$ be the double branched cover of $(\mathbb S_{i,i-2}, \mathbb S_{i,i-2} \cap M_{i,i-2})$. 
Recall that $\mathbb S_{i,i-2} \cap M_{i,i-2}$ is an unknot $\text{U}_1$ in $\mathbb S_{i,i-2}$ so $S_{i,i-2}$ is diffeomorphic to a $3$-sphere.
Let $S_{30}$ be the branched cover of $(\mathbb S_{30},\mathbb S_{30} \cap M_{30}) \cong (S^3, \text{U}_2)$, where $\text{U}_2$ is the $2$-component unlink.
Thus the hypersurface $S_{30} \subset W_{30}$ is diffeomorphic to $S^1 \times S^2$.

There are two diffeomorphisms 
\[
	U_{30} \cong  (S^2 \times D^2) \#_{S_{i,i-2}} \overline{\mathbb{CP}}^2,
\]
for $i=2,3$ corresponding to the decompositions of orbifolds
\[
	(\mathbb B_{30}, M_{30}) \cong (\mathbb B_{30},A_{i}) \# (S^4,\mathbb{RP}^2)
\] where $A_{i}$, $i=2,3$ are two annuli (cf. \cite[Section~7.3]{KMunknot2011}) within different isotopy classes of diffeomorphism.
Moreover, $\mathbb{RP}^2 \subset S^4$ has self-intersection $+2$.
The notations can be summarized in the following table.
\bgroup
\def\arraystretch{1.5}% 
\begin{center}
\begin{tabular}{ |c|c|c| } 
 \hline
  Branch Locus & Base Manifold &  Branched Cover 
 \\  \hline
 $K_i$ & $\mathbb Y_i$ & $Y_i$  
 \\ \hline
 $\Sigma_{i,j}:K_i \to K_{j}$ & $[0,i-j] \times \mathbb Y_i$ & $W_{ij}$ 
 \\ \hline
  %$\Sigma_{i,i-2}:K_i \to K_{i-2}$ & $[0,2] \times \mathbb Y_i$ & $X_{i,i-2}$ 
 %\\ \hline
  %$\Sigma_{30}:K_3 \to K_{0}$ & $[0,3] \times \mathbb Y_3$ & $V_{30}$ 
 %\\ \hline
 $M_{i,i-2}$ & $\mathbb B_{i,i-2}$ & 
 $N_{i,i-2}$
 \\ \hline
 $M_{30}$ & $\mathbb B_{30}$ & $N$
 \\ \hline
 $\mathbb S_{i,i-2} \cap M_{i,i-2}$ & $\mathbb S_{i,i-2}$ & $S_{i,i-2}$
 \\ \hline
 $\mathbb S_{30} \cap M_{30}$ & $\mathbb S_{30}$ & $S_{30}$
 \\ \hline
%  $B^3 \cap K_i$ & $B^3$ & $S^1 \times D^2$
% \\ \hline
  %See below. 
  & $[1/2,3/2] \times \delta_{i}$ & $E_{i-1}$
 \\ \hline
% cell1 & cell2 & cell3
% \\ \hline
\end{tabular}
\end{center}
\egroup

The cobordism $W_{i,i-1}$ can be seen as a surgery cobordism $Y_i \to Y_{i-1}$.
The knots on which we perform surgery are preimages in $\mathbb Y_i$ of arcs connecting the two components of $B^3 \cap K_i$.
Notice that the core of $\Sigma_2(B^3, B^3 \cap K_i) \cong S^1 \times D^2$ is the preimage of this arc. 
(A more detailed account of the surgery can be found in e.g.~\cite{OzSz2003absolute}.)

The surgery cobordisms can be described as gluing invariant $2$-handles to manifolds with torus boundaries $\Sigma_2(\mathbb Y_i - B^3, K_i - B^3)$.
A concrete model of an invariant 2-handle is $D^2 \times D^2 \subset \mathbb R^4$ equipped with the involution
\begin{equation*}
	(x_1,x_2,y_1,y_2) \mapsto (x_1,-x_2,y_1,-y_2).
\end{equation*}
%In particular, the fixed point set on the boundary of the 2-handles consists of four points $(\pm 1, 0, 0, 0)$ and $(0,0,\pm 1, 0)$.
In the base space $[0,1] \times \mathbb Y$, the quotient of the 2-handle is again a 2-handle. 
The core is $[0,1/2] \times \delta_{i+1}$ and the cocore is $[1/2, 1] \times \delta_{i}$.
In particular, a neighbourhood of the union of the cocore in $[0,1] \times \mathbb Y$ and the core in $[1,2] \times \mathbb Y$ is precisely $\mathbb B_{i,i-2}$.

 On the branched cover $W_{i,i-1}$, the cocore in $W_{i,i-1}$ and core in $W_{i-1,i-2}$ of the two 2-handles meet at the surgery knot in $Y_{i-1}$.
 The core and co-core form a 2-sphere $E_{i-1}$ of self-intersection $(-1)$. The boundary of a regular neighbourhood of $E_i$ is $S_{i,i-2}$.

\section{Proof of the Unoriented Skein Exact Triangle}
\label{sec:prof_skein}
Our proof of the exact triangle is modelled on~\cite{KMOS2007} but we will follow the orbifold notations in \cite{KMunknot2011}.
In this section, we will prove the theorem for the ``from'' version.
The other two flavours can be treated similarly.
\subsection{Homological algebra}
\hfill \break
We need the following lemma in homological algebra (cf. \cite[Lemma~4.2]{OzSz2005HFdc}).
\begin{lem}[Triangle Detection]
Suppose for each $i \in \mathbb Z$ we have a chain-complex $(C_i,d_i)$ over $\mathbb F_2$ and chain maps
\begin{equation*}
	f_i: C_i \to C_{i-1}.
\end{equation*}
Suppose that the composite chain map $f_{i-1} \circ f_i$ is homotopic to $0$ via a chain homotopy $H_i$, such that
\begin{equation}
	d_{i-1}H_i + H_id_i + f_{i-1}f_i = 0,
\end{equation}
for all $i$.
Moreover, suppose for all $i$ the map
\begin{equation*}
	H_{i-1}f_i + f_{i-2} H_i : C_i \to C_{i-3},
\end{equation*}
induces an isomorphism in homology.
Then the induced maps $(f_i)_*$ in homology
\begin{equation*}
	(f_i)_*:H_*(C_i,d_i) \to H_*(C_{i-1},d_{i-1})
\end{equation*}
form an exact sequence.
For each $i$ the anti-chain map
\begin{equation*}
	\Phi: C_i \ni s \mapsto (f_is,H_is) \in \text{Cone}(f_i).
\end{equation*}
induces isomorphism in homology.
\end{lem}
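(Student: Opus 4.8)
\medskip
\noindent\textbf{Proof proposal.}
The plan is to settle the formal parts first and then reduce the whole lemma (as in the cited \cite{OzSz2005HFdc,KMOS2007}) to the single claim that each $\Phi_i$ is a quasi\nobreakdash-isomorphism. That $\Phi_i\colon C_i\to\mathrm{Cone}(f_i)$ is an (anti\nobreakdash-)chain map is a direct computation: writing the target as $\mathrm{Cone}(f_i)=C_{i-1}\oplus C_{i-2}$ as in the statement, one expands $\partial_{\mathrm{Cone}}(f_is,H_is)$ and uses $f_id_i=d_{i-1}f_i$ together with the homotopy identity $d_{i-2}H_i+H_id_i+f_{i-1}f_i=0$ to see all terms reassemble into $\Phi_i(d_is)$; over $\mathbb{F}_2$ the sign ambiguity recorded by the word ``anti'' (and the degree shift built into the cone) is harmless. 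Granting next that each $\Phi_i$ is a quasi\nobreakdash-isomorphism, the exactness of $\{(f_i)_*\}$ is purely formal: the tautological short exact sequence $0\to C_{i-2}\to\mathrm{Cone}(f_i)\to C_{i-1}[1]\to0$ has connecting homomorphism $(f_{i-1})_*$, and its quotient projection precomposed with $\Phi_i$ is $f_i$; feeding the identification $H_*(\mathrm{Cone}(f_i))\cong H_*(C_i)$ coming from $\Phi_i$ into the resulting long exact sequence converts it into the cyclic long exact sequence assembled from the maps $(f_i)_*$ and the connecting maps, which is exact at every term. The same identification is the last sentence of the lemma.

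So the real content is that $\Phi_i$ is a quasi\nobreakdash-isomorphism, and the input for this can only be the hypothesis on $H_{i-1}f_i+f_{i-2}H_i$. I would make it bite by introducing the ``second'' chain map attached to the null\nobreakdash-homotopies, $\Theta_i\colon\mathrm{Cone}(f_i)\to C_{i-3}$, defined on $C_{i-1}\oplus C_{i-2}$ by $(y,z)\mapsto f_{i-2}z+H_{i-1}y$; a one\nobreakdash-line check with the homotopy identity for $H_{i-1}$ shows $\Theta_i$ is a chain map. The point is the identity
\[
  \Theta_i\circ\Phi_i \;=\; H_{i-1}f_i+f_{i-2}H_i ,
\]
whose right\nobreakdash-hand side is a quasi\nobreakdash-isomorphism by hypothesis. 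Since we work over $\mathbb{F}_2$, where quasi\nobreakdash-isomorphisms are chain homotopy equivalences, this single identity both exhibits $\Phi_i$ as a split injection up to homotopy (a left inverse is $\bar g\circ\Theta_i$, for $\bar g$ a homotopy inverse of $H_{i-1}f_i+f_{i-2}H_i$) and exhibits $\Theta_i$ as a split surjection up to homotopy (a section is $\Phi_i\circ\bar g$).

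It then remains to upgrade ``split injection'' to ``quasi\nobreakdash-isomorphism'', equivalently to show that $\mathcal T_i:=\mathrm{Cone}(\Phi_i)$ is acyclic. One identifies $\mathcal T_i$ with the iterated mapping cone of $C_i\xrightarrow{f_i}C_{i-1}\xrightarrow{f_{i-1}}C_{i-2}$ carrying the homotopy $H_i$, and checks $\mathrm{Cone}(\Theta_i)=\mathcal T_{i-1}$; the octahedral axiom applied to the factorization $\Theta_i\circ\Phi_i=H_{i-1}f_i+f_{i-2}H_i$, whose target cone is acyclic, then gives $\mathcal T_i\simeq\mathcal T_{i-1}[-1]$ for every $i$. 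In the application the skein system is three\nobreakdash-periodic, so $C_i=C_{i-3}$ and hence $\mathcal T_i=\mathcal T_{i-3}$, which forces $H_*(\mathcal T_i)\cong H_{*-3}(\mathcal T_i)$; for the $\wedge$ and $\vee$ flavours the chain complexes are bounded on one side, so a degree\nobreakdash-periodic homology must vanish, and the $-$ flavour is deduced from the other two via the three\nobreakdash-term long exact sequence relating $\overline{\HMR}$, $\widehat{\HMR}$, $\widecheck{\HMR}$ and the naturality of the cobordism maps.

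\emph{The main obstacle} is precisely this last step. The homotopy\nobreakdash-splitting of the previous paragraph delivers only one half of a quasi\nobreakdash-isomorphism, and certifying that $\mathcal T_i$ is genuinely contractible---rather than merely ``shift\nobreakdash-periodic''---is where the structural finiteness of the set\nobreakdash-up (three\nobreakdash-periodicity, degreewise finite generation, one\nobreakdash-sided boundedness for $\wedge,\vee$) has to be used, and where the bookkeeping, especially the passage from the $\wedge,\vee$ cases to the $-$ case, is most delicate. Everything preceding it is a mechanical manipulation of mapping\nobreakdash-cone differentials and of the homotopy identity.
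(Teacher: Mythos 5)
Your formal skeleton is sound: the chain-map checks for $\Phi_i$ and for $\Theta_i(y,z)=H_{i-1}y+f_{i-2}z$, the identity $\Theta_i\Phi_i=H_{i-1}f_i+f_{i-2}H_i$, the identification $\mathrm{Cone}(\Theta_i)\cong\mathrm{Cone}(\Phi_{i-1})$ (up to shift), the octahedral consequence $\mathrm{Cone}(\Phi_i)\simeq\mathrm{Cone}(\Phi_{i-1})[-1]$, and the reduction of exactness to ``$\Phi_i$ is a quasi-isomorphism'' via the long exact sequence of the cone are all correct. But the decisive step is missing, and you have correctly located it yourself: knowing that the cones $\mathcal T_i$ are shift-periodic, or that $\Phi_i$ is split injective up to homotopy, does not give acyclicity of $\mathcal T_i$. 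Your proposed repair imports hypotheses that are not in the statement --- the lemma is about arbitrary complexes $(C_i,d_i)$ over $\mathbb F_2$ indexed by $i\in\mathbb Z$, with no three-periodicity and no boundedness --- so even if the repair worked it would prove a weaker lemma than the one asserted. Moreover, in the intended application the repair is itself problematic: the chain groups are direct sums over all real spin\textsuperscript{c} structures (including non-torsion ones, where the grading set is cyclic rather than $\mathbb Z$), the groups are completed, the cobordism maps do not preserve gradings, and the suggested deduction of the $-$ flavour from $\vee,\wedge$ uses naturality properties of the homotopies that are nowhere among the hypotheses. So as written this is a genuine gap, not just bookkeeping.

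The gap is closed, with no extra hypotheses, by the argument in the source the paper cites (Ozsv\'ath--Szab\'o, Lemma~4.2; see also Kronheimer--Mrowka--Ozsv\'ath--Szab\'o): in addition to $\Theta_i\Phi_i$ being a quasi-isomorphism, one shows that the other composite $\Phi_{i-3}\Theta_i\colon \mathrm{Cone}(f_{i-1})\to\mathrm{Cone}(f_{i-4})$ (in your indexing of the cones) is also a quasi-isomorphism. Indeed, the degree-one map $K(y,z)=(H_{i-2}z,\,0)$ satisfies, modulo $2$,
\begin{equation*}
	\Phi_{i-3}\Theta_i + dK + Kd
	=
	\begin{pmatrix}
		H_{i-2}f_{i-1}+f_{i-3}H_{i-1} & 0\\
		H_{i-3}H_{i-1} & H_{i-3}f_{i-2}+f_{i-4}H_{i-2}
	\end{pmatrix},
\end{equation*}
a map of cones preserving the two-step filtration whose induced maps on subcomplex and quotient are exactly the hypothesized quasi-isomorphisms $C_{i-2}\to C_{i-5}$ and $C_{i-1}\to C_{i-4}$; the five lemma applied to the two cone long exact sequences shows this matrix map, hence $\Phi_{i-3}\Theta_i$, is a quasi-isomorphism. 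Then $(\Theta_i)_*$ is surjective (from $\Theta_i\Phi_i$) and injective (from $\Phi_{i-3}\Theta_i$), hence an isomorphism, and therefore $(\Phi_i)_*$ is an isomorphism; your formal reduction then yields the exact triangle. This two-out-of-six argument makes the homotopy-splitting and octahedral steps unnecessary and works verbatim for unbounded, non-periodic complexes, which is what the lemma claims.
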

To apply the triangle detection lemma, we will make the following substitution.
\begin{itemize}[leftmargin=*]
	\item Let  $C_i = \check C(K_i)$ be the real monopole Floer chain group (over all real spin\textsuperscript{c} structures).
	\item Let $\check \del: C_i \to C_i$ be the ``from'' differential, where $i$ is not to confused with the grading of the Floer homology.
	\item Let $F_{i}:C_i \to C_{i-1}$ be the cobordism map
	\[
		\check m(\Sigma_{i,i-1}) = 
		\sum_{(\mathfrak s, \uptau)} 
		\check m(\Sigma_{i,i-1};\mathfrak s, \uptau).
	\] 
	\item $H_{i}:C_i \to C_{i-2}$ will be a chain homotopy map, defined in Subsection~\ref{subsec:firsthomotopy}.
	\item $G_{i}:C_i \to C_{i-3}$ will be another chain homotopy, defined in Subsection~\ref{subsec:secondhomotopy}.
\end{itemize}
The homotopies should satisfy the relations
\begin{equation}
	\check\del H_{20} + H_{20}\check\del + F_{10}F_{21} = 0
\end{equation}
and
\begin{equation}
\label{eqn:triangle_detect_iso}
	\check{\del}G_{30} + G_{30}\check\del + F_{10}H_{31} + H_{20}F_{32} + \check L = 0,
\end{equation} 
where $\check L$ is a chain map that is homotopic to the identity.
Each of the chain homotopy will arise from counting moduli spaces parameterized by families of metrics.

%\newpage
\subsection{The first homotopy $H$}\label{subsec:firsthomotopy}
\hfill \break
We consider the double composition $\Sigma_{20}:K_2 \to K_0$, as the construction for general $\Sigma_{i,i-2}$ is identical.
There is a decomposition of  $\Sigma_{20}$ along $\del M_{20}$
\begin{equation*}
	([0,2] \times \mathbb Y, \Sigma_{20}) =
	([0,2] \times \mathbb Y - \mathbb{B}_{20}, \Sigma_{20} - M_{20})
	\cup_{(\del \mathbb{B}_{20}, \del M_{20} )} ( \mathbb{B}_{20}, M_{20} ),
\end{equation*}
Let $V_{20}: K_2 \to K_0$ be a cobordism obtained from gluing a standard disc back to $\Sigma_{20} - M_{20}$.
Then $\Sigma_{20}$ is a direct sum of $V_{20}$ with a $\mathbb{RP}^2 \subset S^4$, where $\mathbb{RP}^2$ has self-intersection $(+2)$:
\[
	([0,2] \times \mathbb Y, \Sigma_{20}) = ([0,2] \times \mathbb Y, V_{20}) \# (S^4, \mathbb{RP}^2)
\]
There is another decomposition  along $(\mathbb Y_i, K_i)$:
\begin{equation*}
	(\mathbb Y \times [0,2], \Sigma_{20}) = 
	(\mathbb Y \times [0,1], \Sigma_{21}) \cup_{(\mathbb Y_1, K_1) }
	(\mathbb Y \times [1,2], \Sigma_{10})
\end{equation*}
The hypersurfaces $\mathbb Y_1$ and $\mathbb S_{20}$ intersect at a 2-sphere, where the singular loci intersect at four points on the 2-sphere.

For the double branched cover $W_{20}$,
there are corresponding decompositions
\begin{equation*}
	W_{20} = (W_{20} - N_{20}) \cup_{S_{20}} N_{20}
    = W_{20} = W_{21} \cup_{Y_1} W_{10}.
\end{equation*}
Diffeomorphically, $N_{20} \cong \overline{\mathbb{CP}}^2$ is the branched cover of $S^4$ along a $\mathbb{RP}^2$ with self-intersection $+2$.
The intersection $Y_1 \cap S_{20}$ is a 2-torus.

\begin{figure}[hbt]
  \includegraphics[height=2in]{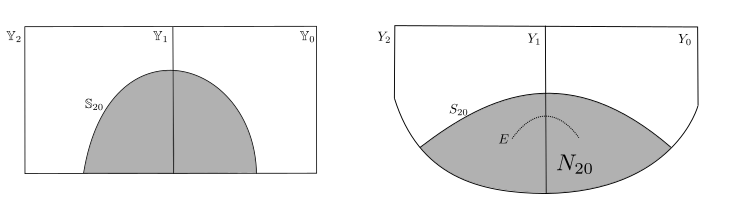}
  \caption{Hypersurfaces in the double composite cobordism.}
\end{figure}
Let $Q(\mathbb{S}_{20},\mathbb{Y}_1)$ be the family of orbifold metrics obtained from inserting a cylinder $[T,-T] \times \mathbb S_{20}$ when $T$ is negative, and a cylinder $[-T,T] \times \mathbb Y_1$ when $T$ is positive.
Extend the metric to $\bar Q = \bar Q(\mathbb S_{20},\mathbb{Y}_1) \cong [-\infty,\infty]$ by adding broken orbifold metric, along $\mathbb{S}_{20}$ at $T = -\infty$ and along $\mathbb{Y}_1$ at $T = +\infty$.
We pull back the orbifolds metrics above to
obtain $\bar Q$-family of invariant smooth Riemannian metrics on the branched cover $W_{20}$.

We can arrange the 1-parameter family of  metrics on $W_{20}$ so that it has positive scalar curvature over $S_{20}$ and is product-like near the torus $Y_1 \cap S_{20}$.
Furthermore, we can arrange the metrics so that at time $T = -\infty$, the punctured $\overline{\mathbb{CP}}^2$ component has positive scalar curvature also.
This follows from \cite[Section~1]{GromovLawsonPSC} or using the explicit PSC metrics in \cite{GongThesis2018}.
(We put no restriction on the scalar curvature for parameter $T$ small, except near the neighbourhoods of hypersurfaces.)

Consider the real Seiberg-Witten moduli space over the family $\bar Q$
\begin{equation*}
	M_z(\mathfrak a; \Sigma_{20}^*; \mathfrak b)
	\to
	\bar Q,
\end{equation*}
and its compactification $M^+_z(\mathfrak a,\Sigma_{20}^* ,\mathfrak b)_{\bar Q}$ by adding broken trajectories.
Denote the corresponding reducible Seiberg-Witten moduli space $M^{\red}_z(\mathfrak a, \Sigma_{20}^*,\mathfrak b)_{\bar Q}$.

Counting the zero dimensional components, we obtain the maps $H^o_o, H^u_o, H^u_s, \bar H^s_s, \bar H^s_u, \bar H^u_s,$ and $\bar H^u_u$, where for example
\begin{equation*}
	H^o_u: C^o(K_2) \to C^u(K_0).
\end{equation*}
The homotopy $H_{20}$ is defined in the following matrix form with respect to the decomposition $C_i = C_i^o \oplus C_i^s$.
\begin{equation*}
	H_{20} =
	\begin{pmatrix}
	H^o_o & H^u_o \bar\del^s_u + m^u_o(\Sigma_{10})\bar m^s_u(\Sigma_{21}) + \del^u_0 \bar H^s_u\\
	H^o_s & \bar H^s_s + H^u_s \bar\del^s_u + m^s_u(\Sigma_{10})\bar m_u^s(\Sigma_{21}) + \del^u_s \bar H^s_u	
	\end{pmatrix}.
\end{equation*}

\begin{prop}\label{prop:Hidentity}
	For suitable small perturbation on $(\mathbb S_{20},\mathbb S_{20} \cap M_{20})$, we have
	\begin{equation*}
		\check\del \check H_{20} + H_{20}\check\del + F_{10}F_{21} = 0.
	\end{equation*}	
%(Recall $\mathbb S_{20} \cap M_{20}$ is an unknot.)
\end{prop}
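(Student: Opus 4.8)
The plan is to obtain the relation by counting the boundary points of the one-dimensional parameterized moduli spaces over the family $\bar Q = \bar Q(\mathbb S_{20},\mathbb Y_1) \cong [-\infty,+\infty]$, in the manner of the composition law for cobordism maps in \cite{KMbook2007} and the proof of the surgery exact triangle in \cite{KMOS2007}, but carried out $\upiota$-equivariantly on the real configuration spaces of $W_{20}$. Concretely, for each pair of critical points $[\mathfrak a]$ on $Y_2$, $[\mathfrak b]$ on $Y_0$, and each homotopy class $z$ for which $\gr_z(\mathfrak a, \Sigma_{20}^*, \mathfrak b)_{\bar Q} = 1$, I would pass to the compactified moduli space $M^+_z(\mathfrak a, \Sigma_{20}^*, \mathfrak b)_{\bar Q}$ together with its reducible locus $M^{\red,+}_z$, a compact $1$-manifold with boundary; the mod-$2$ count of its boundary points, summed over all such $[\mathfrak a], [\mathfrak b], z$, is the source of the identity. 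The analytic inputs are the compactness and gluing statements for the parameterized real Seiberg--Witten moduli spaces already recorded in Section~\ref{subsec:gauge_constr} and \cite{ljk2022}.

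The second step is to sort the boundary strata into three groups. (i) Breaking of trajectories at the incoming end $Y_2$ or the outgoing end $Y_0$: summing these over intermediate critical points and homotopy classes, and performing the usual bookkeeping for boundary-obstructed gluings in the Kronheimer--Mrowka framework (which produces composites of reducible trajectory counts), yields exactly $\check\del H_{20} + H_{20}\check\del$ with $H_{20}$ in the stated matrix form; the correction terms $H^u_o\bar\del^s_u$, $\del^u_o\bar H^s_u$, and so on are precisely the contributions of the codimension-one boundary-obstructed strata, in the same way that such terms appear in $\check\del$ and $\check m$ in Section~\ref{subsec:gauge_constr}. (ii) The endpoint $T = +\infty$, where the metric degenerates along $\mathbb Y_1$ and the branched cover breaks along $Y_1$: counting the resulting fibre products over critical points of the perturbed gradient on $Y_1$, including the reducible contributions $m^u_o(\Sigma_{10})\bar m^s_u(\Sigma_{21})$ and $m^s_u(\Sigma_{10})\bar m^s_u(\Sigma_{21})$ coming from broken reducible trajectories through $Y_1$, assembles into the composite $\check m(\Sigma_{10}) \circ \check m(\Sigma_{21}) = F_{10}F_{21}$. (iii) The endpoint $T = -\infty$, where the metric degenerates along $\mathbb S_{20}$, i.e. along $S_{20} \cong S^3$ on the branched cover, separating $W_{20}$ into the cobordism piece $W_{20}\setminus N_{20}$ (the branched cover of $V_{20}$ with a ball removed) and $N_{20}$, a punctured copy of $\overline{\mathbb{CP}}^2$ containing the $(-1)$-sphere $E_1$.

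The decisive point, and the reason for the hypothesis on the perturbation over $(\mathbb S_{20},\mathbb S_{20}\cap M_{20})$, is that the $T = -\infty$ stratum contributes nothing. Here one uses that the $\upiota$-invariant metric was arranged to have positive scalar curvature on $N_{20}$ and on a product neck around $S_{20}$: there are then no irreducible real solutions on $N_{20}^*$, and the reducible solutions asymptotic to a given real reducible on $S^3$ are cut out by the linearized equations and --- for a suitable small real perturbation over $\mathbb S_{20}$ making the perturbed Dirac spectrum on $S^3$ simple --- form either the empty set or a real projective space. Since $M_{20}$ has self-intersection $+2$, equivalently since $N_{20}$ is a tubular neighbourhood of the $(-1)$-sphere $E_1$ and blowing $E_1$ down returns the branched cover of $V_{20}$, the real index calculus of Section~\ref{sec:Froyshov} (the input corresponding to \cite[Section~7]{KMunknot2011} and the negative-definiteness argument of \cite{KMOS2007}) shows that the $N_{20}^*$-factor, paired against the $(W_{20}\setminus N_{20})$-factor over the reducibles of $S^3$, cannot meet the one-dimensional total moduli space --- either on dimensional grounds or because the surviving spin\textsuperscript{c} structures occur in conjugate pairs that cancel over $\mathbb F_2$. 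I expect this verification --- transporting the blow-up/negative-definite input through the real, $\upiota$-equivariant index theory and checking that the chosen perturbation on $\mathbb S_{20}$ kills the stratum without creating new boundary --- to be the main obstacle; the remaining gluing analysis and boundary-obstructed bookkeeping is routine. Granting it, the mod-$2$ boundary count of the one-dimensional moduli spaces over $\bar Q$ reduces to $\check\del H_{20} + H_{20}\check\del + F_{10}F_{21} = 0$, which is the claim.
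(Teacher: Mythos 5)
Your overall framework is the same as the paper's: count ends of the one--dimensional parameterized moduli spaces over $\bar Q(\mathbb S_{20},\mathbb Y_1)$, identify the interior and $T=+\infty$ strata with $\check\del H_{20}+H_{20}\check\del$ and $F_{10}F_{21}$ (with the boundary-obstructed corrections in the matrix for $H_{20}$), and then deal with the $T=-\infty$ stratum where $W_{20}$ splits along $S_{20}\cong S^3$ into the branched cover of $V_{20}$ and $N_{20}\cong\overline{\mathbb{CP}}^2\setminus\Ball$. Parts (i) and (ii) are fine and match the paper.

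The gap is in the decisive step (iii), which you acknowledge as the main obstacle but do not actually supply, and where one of your two proposed mechanisms is wrong. The $T=-\infty$ contributions do \emph{not} vanish ``on dimensional grounds'': with the real index formula $\gr_{z_k}(M_{20},\mathfrak a_i)=-k(k-1)/2-i$ for $i\ge0$ and $-k(k-1)/2-i-1$ for $i<0$, the moduli spaces $M_{z_k}(M_{20},\mathfrak a_i)$ over the punctured $\overline{\mathbb{CP}}^2$ are nonempty for $i<0$ (a single point whenever the dimension is zero), and these points genuinely occur as ends of the one--dimensional moduli space; the positive scalar curvature hypothesis only kills the irreducibles and the two exceptional zero-dimensional cases $i=0$, $k\in\{0,-1\}$. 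The stratum contributes zero mod $2$ only because the points come in pairs: the deck transformation of the branched cover $N_{20}\to\mathbb B_{20}$ interchanges the components $z_k$ and $z_{-1-k}$ attached to conjugate real spin\textsuperscript{c} structures, so $M_{z_k}(M_{20},\mathfrak a_i)$ and $M_{z_{-1-k}}(M_{20},\mathfrak a_i)$ are isomorphic and (since $k\neq -1-k$ for all integers $k$) their counts cancel over $\mathbb F_2$. You float ``conjugate pairs cancel'' as an alternative, but in the real setting this pairing is exactly what must be proved --- charge conjugation is not available off the shelf as in \cite{KMOS2007}, and one must exhibit the symmetry compatibly with the real structure, which here is the deck transformation acting on $N_{20}$ --- together with the real dimension formula and the PSC argument for the exceptional cases. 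Without that argument the identity is not established.
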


\begin{proof}
The argument is the same as \cite[Proposition~5.2]{KMOS2007},  while the index formulae are different.
The idea is to stretch the necks along hypersurfaces and examine the boundaries of 1-dimensional moduli spaces.
In addition to the boundary points that appeared in the proof of composition law
\cite[Proposition~13.3]{ljk2022}, there are contributions from the fibre of $M_z^+(\mathfrak a, W_{20}^*,\mathfrak b)_{\bar Q}$ over $T=-\infty$.

A typical element of the $T=-\infty$ fibre is of the form
	\begin{equation*}
		(
		\check\upgamma_{K_2},
		\check\upgamma_{\mathbb{S}_{20}},
		\check\upgamma_{K_0},
		\upgamma_{V_{20}},
		\upgamma_{M_{20}})
	\end{equation*}
where the first and the third are real trajectories on $\Sigma_2(\mathbb Y_2, K_2)$ and $\Sigma_2(\mathbb Y_0, K_0)$.
The elements $\upgamma_{V_{20}}$ and $\upgamma_{M_{20}}$ are solutions on the branched covers along $(V_{20} - \Delta)$ and $M_{20}$ with cylindrical ends attached.
Lastly, the $\check\upgamma_{\mathbb{S}_{20}}$ is a trajectory on the branched cover of $(S^3,\text{U}_1)$.
Under a small perturbation as in Example~\ref{exmp:U1}, there is a single tower $\{\mathfrak a_i\}$ of critical points, and there are zero trajectories modulo two between any pair of critical points.

The key step of the proof is to show $\upgamma_{M_{20}}$ comes in pairs.
To this end, let $z_k$ be the component of moduli space corresponding to the real spin\textsuperscript{c} structure $\mathfrak t_k$ on 
$$
N_{20} = \Sigma_2(\mathbb B_{20}, M_{20}) \cong \overline{\mathbb{CP}}^2 \setminus \Ball
$$ 
with $\langle c_1(\mathfrak t_k),[E]\rangle = 2k-1$ for the $(-1)$-self-intersection sphere $E$.
For a sufficiently small perturbation on $\Sigma_2(\mathbb B_{20},M_{20})$, we have the following.
\begin{itemize}[leftmargin=*]
		\item The formal dimension of $M_{z_k}(M_{20},\mathfrak a_i)$ is
		\[
		\gr_{z_k}(M_{20},\mathfrak a_i) =  \begin{cases}
			 -k(k-1)/2 - i & i \ge 0\\
			 -k(k-1)/2 - i - 1 & i < 0
		\end{cases}.\]
		\item The moduli spaces $M_z(M_{20},\mathfrak a_{i})$ contain no irreducibles and are empty for $i \ge 0$.
		\item For $i < 0$, the moduli space $M_{z_k}(M_{20},\mathfrak a_i)$ consists of a single point when it has dimension zero.
		\item The moduli spaces asscoiated to conjugate real spin\textsuperscript{c} structures $M_{z_k}(M_{20},\mathfrak a_i)$ and $M_{z_{1-k}}(M_{20},\mathfrak a_i)$ are isomorphic.
	\end{itemize}
The first bullet point uses the dimension formulae in \cite[Proposition~5.2]{KMOS2007}.
The second bullet point follows from the first, except in the case $i = 0$ and $k \in \{0,-1\}$.
In these two exceptional cases, the formal dimension is zero 
but the moduli space is empty by the positive scalar curvature assumption on $N_{20}$.
The last bullet point is a consequence of the fact the deck transformation on $N_{20}$ interchanges $z_k$ and $z_{-1-k}$.
The formulae regarding the boundary contributions can be proved the same way as \cite[Lemma~5.3]{KMOS2007}.
\end{proof}

\subsection{The second homotopy $G$}\label{subsec:secondhomotopy}
\hfill \break
The construction of $G$ involves a 2-dimensional family of metrics over $([0,3] \times \mathbb Y, \Sigma_{30})$.
\subsection*{Hypersurfaces and pentagon of metrics}
\hfill \break
Inside the triple composition $([0,3] \times \mathbb Y, \Sigma_{30})$, there are five hypersurfaces 
\begin{equation*}
	\mathbb Y_2, \mathbb S_{30}, \mathbb Y_1,  \mathbb S_{20}, \mathbb S_{31},
\end{equation*}
where two of the members intersect only if they are adjacent in the above order (cyclically). See Figure~\ref{fig:5hypersurfaces}.
\begin{figure}[hbt]
	\includegraphics[height=1.5in]{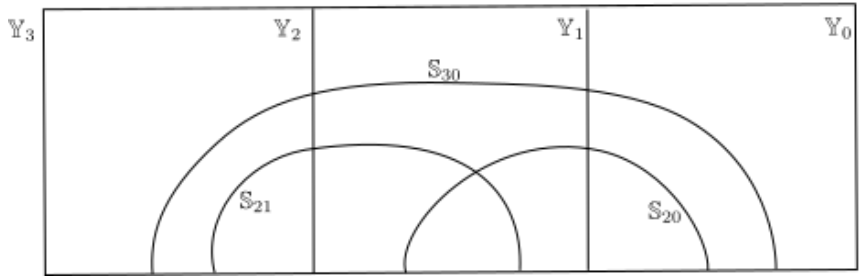}
	\caption{Five hypersurfaces in the triple composite cobordism.}
	\label{fig:5hypersurfaces}
\end{figure}
For any pair $(\mathbb S, \mathbb S')$ of disjoint hypersurfaces, we define a $[0,\infty)^2$-family of metrics $P(\mathbb S,\mathbb S')$ by stretching along the pair by $(T,T') \in [0,\infty)^2$ as in the construction of $H$.
This family of metrics extends naturally to $\bar P(\mathbb S,\mathbb S')$ by adding broken metrics.

There are five disjoint pairs and the five rectangles of Riemannian metrics are glued along their common edges to a pentagon $\bar P$.
The boundary of $\bar Q$ consists of broken metrics which we denote
\begin{equation*}
	\del \bar P = \bar Q(\mathbb Y_2) \cup \bar Q(\mathbb S_{30}) \cup \bar Q(\mathbb Y_1) \cup \bar Q(\mathbb S_{20}) \cup \bar Q(\mathbb S_{31}).
\end{equation*}
\begin{figure}[hbt]
  \includegraphics[width=3in]{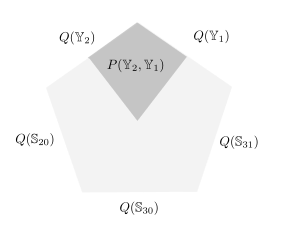}
  \caption{The pentagon $\bar P$ of metrics.}
\end{figure}
We pull back the metrics to obtain a family of invariant metrics on $W_{30}$, stretched along the hypersurfaces
\begin{equation*}
	Y_2, S_{30}, Y_1, S_{20}, S_{31}.
\end{equation*}

Let $\mathfrak a, \mathfrak b \in \mathfrak C(K_0)$.
For the zero-dimensional moduli space $M_z^+(\mathfrak a, \Sigma_{30}^*,\mathfrak b)$ and reducible $M_z^{\red,+}(\mathfrak a, \Sigma_{30}^*,\mathfrak b)$, we define the mod-2 counts
\begin{equation*}
	m_z(\mathfrak a, \Sigma_{30}^*, \mathfrak b), \quad
	\bar m_z(\mathfrak a, \Sigma_{30}^*, \mathfrak b),
\end{equation*}
respectively.
By varying the type of critical points (i.e. interior, boundary-stable and boundary-unstable), $m_z$ and $\bar m_z$ will define the matrix entries $\{G^o_o, G^o_s, G^u_0, \bar G^s_s, \bar G^s_u, \bar G^u_s, \bar G^u_u\}$.
For instance, $G^o_o$ is a map
\begin{equation*}
	G^o_o: C^o_{\bullet}(K_3) \to C^o_{\bullet}(K_0).
\end{equation*}

To reveal identity~\eqref{eqn:triangle_detect_iso}, we also consider the 1-dimensional moduli spaces $M_z^+(\mathfrak a, \Sigma_{30}^*,\mathfrak b)$.
Let $A^o_o$ be the mod-2 count of this moduli space, when $\mathfrak a$ and $\mathfrak b$ are both interior.
Then
\begin{equation*}
	A^o_o = 0.
\end{equation*}
%We analyze the contributions to $A^o_o$ as follows.
\subsection*{Contributions to $A^o_o$}
\subsection*{(a)}
There are endpoints of $M_z^+(\mathfrak a, \Sigma_{30}^*,\mathfrak b)$ that lie above the interior points $P \subset \bar P$.
They arise from the strata:
\begin{itemize}
	\item $\check M_{z_1}(\mathfrak a,K_3, \mathfrak a_1) \times M_{z_2}(\mathfrak a_1, \Sigma_{30}^*,\mathfrak b)_{P}$,
	\item $M_{z_1}(\mathfrak a, \Sigma_{30}^*, \mathfrak b_1)_P \times \check M_{z_2}(\mathfrak b_1,K_0,\mathfrak b)$,
	\item $\check M_{z_1}(\mathfrak a, K_3, \mathfrak a_1) \times \check M_{z_2}(\mathfrak a_1,K_3,\mathfrak a_2) \times M_{z_3}(\mathfrak a_2,\Sigma_{30}^*,\mathfrak b)_P$,
	\item $\check M_{z_1}(\mathfrak a, K_3,\mathfrak a_1) \times M_{z_2}(\mathfrak a_1,\Sigma_{30}^*,\mathfrak b_1)_P  \times \check M_{z_3}(\mathfrak b_1,K_0,\mathfrak b)$,
	\item $M_{z_1}(\mathfrak a,\Sigma_{30}^*,\mathfrak b_1)_P \times \check M_{z_2}(\mathfrak b_1, K_0, \mathfrak b_2) \times \check M_{z_3}(\mathfrak b_2,K_0,\mathfrak b)$,
\end{itemize}
where the middle terms in the triple product  are boundary-obstructed.
Together, they contribute
\begin{equation*}
	G^o_o \del^o_o + \del^o_oG^o_o + \del^u_o\bar\del^s_uG^o_s + \del^u_o\bar G_u^s \del^o_s + G^u_o\delbar^s_u\del^o_s
\end{equation*}
to $A^o_o$.

\subsection*{(b)}
The contributions from the edges $\bar Q(\mathbb S_{31})$ and $\bar Q(\mathbb S_{20})$ in the boundary $\del \bar P$ are zero.
Indeed, if the metric is broken along one $\mathbb S_{31}$ and $\mathbb S_{20}$, the cobordism $\Sigma_{30}$ splits off a standard $\mathbb{RP}^2$ inside $ S^4$ as connected summand, having self-intersection $+2$.
The argument in Proposition~\ref{prop:Hidentity} shows that the boundary points are even.

\subsection*{(c)}
The edge $\bar Q(\mathbb Y_1)$ correspond to the decomposition
\begin{equation*}
	([0,3] \times \mathbb Y,\Sigma_{30}) =([0,2] \times \mathbb Y, \Sigma_{31}) \cup_{\mathbb Y_1} ([0,1] \times \mathbb Y, \Sigma_{10}),
\end{equation*}
where the metric is being streched along $\mathbb Y_2$ and $\mathbb S_{31}$, and is constant on $([0,1] \times \mathbb Y, \Sigma_{10})$.
The endpoints are either from a product of two factors of form
\begin{equation*}
	M_{z_1}(\mathfrak a, \Sigma_{31}^*, \mathfrak a_1)_{\bar Q} \times 
	M_{z_2}(\mathfrak a_1, \Sigma_{10}^*, \mathfrak b),
\end{equation*}
or product of three factors like \textbf{(a)}, containing a boundary obstructed factor.
The contribution is
\begin{equation*}
	m^o_o(\Sigma_{10})H^o_o(\Sigma_{31}) + 
	m^u_o(\Sigma_{10})\bar H^s_u(\Sigma_{31})\del^o_s + m^u_o(\Sigma_{10})\delbar^s_u H^o_s(\Sigma_{31})
	+ \del^u_o\bar m^s_u(\Sigma_{10})H^o_s(\Sigma_{31}).
\end{equation*}

\subsection*{(d)}
The contribution from $\bar Q(\mathbb Y_2)$ is similar:
\begin{equation*}
	H^o_o(\Sigma_{20})m^o_o(\Sigma_{32}) + 
	H^s_u(\Sigma_{20})\bar m^u_o(\Sigma_{32}) \del^o_s +
	H^u_o(\Sigma_{20})\delbar^s_u m^o_s(\Sigma_{32})+ 
	\del^u_o\bar H^s_u(\Sigma_{20})m^o_s(\Sigma_{32}).
\end{equation*} 

\subsection*{(e)}
There is a contribution at vertex where $\bar Q_{\mathbb Y_1}$ and $\bar Q_{\mathbb Y_2}$ meet, which comes from the moduli space
\begin{equation*}
	M_{z_1}(\mathfrak a, \Sigma_{32}^*, \mathfrak a_1) \times
	M_{z_2}(\mathfrak a_1, \Sigma_{21}^*, \mathfrak a_2) \times
	M_{z_3}(\mathfrak a_2, \Sigma_{10}^*,\mathfrak b),
\end{equation*}
where the middle term is boundary is boundary-obstructed.
The contribution is
\begin{equation*}
	m^u_o(\Sigma_{10})\bar m^s_u(\Sigma_{21}) m^o_s(\Sigma_{32})
\end{equation*}
to $A^o_o$.

\subsection*{(f)}
The last edge $\bar Q(\mathbb S_{30})$ is the most interesting one.
Geometrically we have the decomposition
\begin{equation*}
	([0,3] \times \mathbb Y, \Sigma_{30}) = ([0,3] \times \mathbb Y - B^4, [0,3] \times K_0 - \Delta) \cup_{(\mathbb S_{30}, \del  M_{30} )} (\mathbb B_{30}, M_{30}).
\end{equation*}
Here $\Delta$ is the union of two standard 2-discs in a 4-ball $B^4$, and recall $\del M_{30} \cong \text{U}_2$ consists of two unknotted unlinked circles.
In other words, if one cuts off $(\mathbb B_{30}, M_{30})$  from $([0,3] \times \mathbb Y, \Sigma_{30})$, then the resulting orbifold is simply the product cobordism with two standard discs in a 4-ball removed.
Let
\[
	\mathbb U  = ([0,3] \times \mathbb Y - B^4, [0,3] \times K_0 - \Delta).
\]
The orbifold $\mathbb U^*$ has three cylindrical ends 
\begin{equation*}
	(\mathbb Y_3, K_3) \sqcup (\mathbb Y_0, K_0) \sqcup
	(\mathbb S_{30}, \del M_{30})
\end{equation*}
and can be viewed as a cobordism
$
	\mathbb U^*: (\mathbb Y_3, K_3) \sqcup (\mathbb S_{30}, \del M_{30}) \to (\mathbb Y_0, K_0).
$
The second summand $(B_{30}, M_{30})$ is a 4-orbifold with oriented boundary $(\mathbb S_{30}, \del M_{30})$.
The family of metrics $\bar Q(\mathbb S_{30}) \simeq [-\infty, +\infty]$ is constant on $\mathbb U^*$.
On the other hand, for $T \in \bar Q = \bar Q(\mathbb S_{30})$, the metric on $\mathbb B_{30}$ is stretched along $\mathbb S_{31}$ for $T < 0$ and $\mathbb S_{20}$ for $T > 0$ on $(\mathbb B_{30}, M_{30})$.

Choose a sufficiently small perturbation on $\Sigma_2(\mathbb S_{30}, \del M_{30})$, so that there are no irreducible critical points, and no irreducible trajectories on $\reals \times (\mathbb S_{30}, \del M_{30})$.
Let $\mathfrak a'$ be a critical point of the perturbed real Seiberg-Witten equation.
We use the zero-dimensional parametrized moduli spaces $M_z(\mathbb B_{30}^*, M_{30}^*, \mathfrak a')_{\bar Q}$ and $M_z^{\red}(\mathbb B_{30}^*, M_{30}^*,\mathfrak a')_{\bar Q}$ to define the elements
\begin{align*}
	n_s &\in C^s_{\bullet}(\del M_{30}), \\
	n_o &\in C^o_{\bullet}(\del M_{30}), \\
	\bar n_s &\in C^s_{\bullet}(\del M_{30}), \\
	\bar n_u &\in C^u_{\bullet}(\del M_{30}).
\end{align*}
We will prove in Corollary~\ref{cor:M30_dim} that
	$\bar n_s$ is zero, and for now we take it as granted.

Let $\mathfrak a$ be a critical point on $(\mathbb Y_3, K_3)$, let
$\mathfrak a'$ be a critical point on $(\mathbb S_{30}, \del M_{30})$, and $\mathfrak b$ be a critical point on $(\mathbb Y_0, K_0)$.
We have the moduli spaces
\begin{equation*}
	M_z(\mathfrak a', \mathfrak a, \mathbb U^*, \mathfrak b) \text{ and }
	M_z^{\red}(\mathfrak a', \mathfrak a, \mathbb U^*, \mathfrak b).
\end{equation*}
We count the zero dimensional moduli spaces above to define the matrix entries of maps
\begin{equation*}
	m^{uo}_o: C^u_{\bullet}(\mathbb S_{30}, \del M_{30}) \otimes C^o_{\bullet}(K_3)
	\to C^o_{\bullet}(K_3),
\end{equation*}
and similarly
\begin{equation*}
	m^{uu}_o, m^{uo}_s, m^{uu}_s,
	\bar m^{ss}_s, \bar m^{ss}_u, \bar m^{su}_u,
	\bar m^{su}_u, \bar m^{us}_s, \bar m^{us}_u,
	\bar m^{uu}_s, \bar m^{uu}_u.
\end{equation*}
In particular, the maps $\bar m^{ss}_s$, $\bar m^{su}_u$, and $\bar m^{us}_u$ arise from boundary-obstructed moduli spaces.
The moduli space $M_z(\mathfrak a',\mathfrak a, \mathbb U^*,\mathfrak b)$ contributing to $\bar m^{ss}_u$ are  boundary-obstructed with corank-2; that is, the formal dimension of the moduli spaces are $\gr_z(\mathfrak a', \mathfrak a, \mathbb U^*, \mathfrak b) = -2$.

The end points belonging to $\bar Q_{\mathbf S_{30}}$ of the 1-dimensional moduli space $M^+_z(\mathfrak a, \Sigma_{30}^*, \mathfrak b)_{\bar P}$ 
%(to $A^o_o$) 
	come in four sorts.
\begin{itemize}
%[leftmargin=*]
	\item two factors:
	\[M_{z_1}(B_{30}^*, M_{30}^*,\mathfrak a') \times M_{z_2}(\mathfrak a',\mathfrak a, \mathbb U^*, \mathfrak b),\]
	where $\mathfrak a'$ is necessarily boundary-unstable;
	\item three factors: such cannot exists because $\mathfrak a$ and $\mathfrak b$ are irreducible;
	\item four factors: one of the factor must be doubly-boundary-obstructed
	\[
	M_{z_1}(B_{30}^*, M_{30}^*,\mathfrak a')_{\bar Q} \times
	M_{z_2}(\mathfrak a, \mathfrak a_1) \times
	M_{z_3}(\mathfrak a', \mathfrak a_1, \mathbb U^*, \mathfrak b_1) \times
	M_{z_4}(\mathfrak b_1,\mathfrak b),
	\]
	where $\mathfrak a'$ is boundary-stable, $\mathfrak a_1$ is boundary-stable, $\mathfrak b_1$-unstable.
\end{itemize}
We get terms
\begin{equation*}
	m^{uo}_o(\bar n_u \otimes \cdot) + \del^u_o \bar m^{ss}_u(n_s \otimes \del^o_s(\cdot)).
\end{equation*}

\subsection*{(g)}
Finally, we sum over over all contributions of boundary and interior contributions of end points of the 1-dimensional moduli space $M^+_z(\mathfrak a, \Sigma_{30}^*, \mathfrak b)_{\bar P}$ from \textbf{(a)} to \textbf{(f)} discussed thus far:
\begin{align*}
	A^{oo}_o 
	&= G^o_o \del^o_o + \del^o_oG^o_o + \del^u_o\bar\del^s_uG^o_s + \del^u_o\bar G_u^s \del^o_s + G^u_o\delbar^s_u\del^o_s\\
	&+ m^o_o(\Sigma_{10})H^o_o(\Sigma_{31}) + 
	m^u_o(\Sigma_{10})\bar H^s_u(\Sigma_{31})\del^o_s \\
	&+ m^u_o(\Sigma_{10})\delbar^s_u H^o_s(\Sigma_{31})
	+ \del^u_o\bar m^s_u(\Sigma_{10})H^o_s(\Sigma_{31})\\
	&+ H^o_o(\Sigma_{20})m^o_o(\Sigma_{32}) + 
	H^s_u(\Sigma_{20})\bar m^u_o(\Sigma_{32}) \del^o_s\\
	&+
	H^u_o(\Sigma_{20})\delbar^s_u m^o_s(\Sigma_{32})+ 
	\del^u_o\bar H^s_u(\Sigma_{20})m^o_s(\Sigma_{32})\\
	&+m^u_o(\Sigma_{10})\bar m^s_u(\Sigma_{21}) m^o_s(\Sigma_{32})\\
	&+ m^{uo}_o(\bar n_u \otimes \cdot) + \del^u_o \bar m^{ss}_u(n_s \otimes \del^o_s(\cdot))\\
	&= 0.
\end{align*}

\subsection*{The rest of end-points} 
\hfill \break
As illustrated above for $A^{o}_o$, the computations in the ordinary setting applies to the real setting.
We refer the reader to \cite[Section~5]{KMOS2007} for formulae of $A^{**}_*$ and $\bar A^{**}_*$.

\subsection{The key identity}
The goal of this subsection is to verify the the identity~\eqref{eqn:triangle_detect_iso} and the quasi-isomorphism hypothesis of $\check L$.
We
define the map
\begin{equation*}
	\check L: \check C_{\bullet}(K_3) \to \check C_{\bullet}(K_3) = \check C_{\bullet}(K_0)
\end{equation*}
by
\begin{equation*}
	\check L = \begin{bmatrix}
		L^o_o && L^u_o\bar\del^s_u + \del^u_o\bar L^s_u\\
		L^o_s && \bar L^s_s + L^u_s\bar\del^s_u + \del^u_s \bar L^s_u
	\end{bmatrix},
\end{equation*}
where we insert $\bar n_u$ into the $(\mathbb S_{30}, \del M_{30})$ end of the cobordism $\mathbb U$, i.e. 
\begin{equation*}
	L^o_o = m^{uo}_o(\bar n_u \otimes \cdot),
\end{equation*}
and the rest of the entries are defined analogously.
Moreover, we define $\check G: \check C_{\bullet}(K_3) \to \check C_{\bullet}(K_3)$ by the formula
\begin{equation*}
	\check G = \begin{bmatrix}
		a & b\\
		c & d
	\end{bmatrix},
\end{equation*}
where
\begin{align*}
	a &= G^o_o\\
	b &= \del^u_o \bar G^s_u + G^u_o \delbar^s_u + m^u_o \bar H^s_u + H^u_s \bar m^s_u + \del^u_o(\bar m_u^{ss}(n_s \otimes \cdot))\\
	c &= G^o_s\\
	d &= \bar G^s_s + \del^u_s \bar G^s_u +
	G^u_s\delbar^s_u + m^u_s\bar H^s_u + H^u_s \bar m^s_u + \del^u_s\bar m^{ss}_u(n_s \otimes \cdot) + \bar m^{ss}_s(n_s \otimes \cdot),
\end{align*}
and where for example we have written $m^u_s\bar H^s_u$ as an abbreviation for $m^u_o(\Sigma_{10})\bar H^s_u(\Sigma_{31})$.

\begin{prop}
We have the identity:
\begin{equation}
\label{eqn:gauge_triangle_isom}
	\check\del \check G + \check G \check \del
	=
	\check m(\Sigma_{10})\check H_{31} + \check H_{20}\check m(\Sigma_{32}) + \check L,
\end{equation}
where $\check H_{i,i-2}$ is the homotopy map defined for the double composition $W_{i,i-2}$.
\end{prop}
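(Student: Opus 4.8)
The plan is to prove \eqref{eqn:gauge_triangle_isom} by expanding both sides as $2 \times 2$ matrices with respect to the splitting $\check C_\bullet = C^o_\bullet \oplus C^s_\bullet$ and matching the four resulting entries against the relations obtained from the boundaries of the one-dimensional parametrized moduli spaces over the pentagon $\bar P$. For each choice of critical-point flavours there is a count $A^{**}_*$, and a reducible analogue $\bar A^{**}_*$, of the endpoints of the corresponding compact one-manifold $M^+_z(\mathfrak a, \Sigma_{30}^*, \mathfrak b)_{\bar P}$, and these counts vanish. The endpoints were enumerated in items \textbf{(a)}--\textbf{(f)}: the interior stratum of $\bar P$ contributes precisely the terms of $\check\del\check G + \check G\check\del$; the two ``easy'' edges $\bar Q(\mathbb S_{31})$ and $\bar Q(\mathbb S_{20})$ contribute nothing by the $\mathbb{RP}^2$-splitting argument of Proposition~\ref{prop:Hidentity}; the edges $\bar Q(\mathbb Y_1)$ and $\bar Q(\mathbb Y_2)$, together with the vertex where they meet, contribute $\check m(\Sigma_{10})\check H_{31} + \check H_{20}\check m(\Sigma_{32})$ once the various matrix entries are reassembled; and the distinguished edge $\bar Q(\mathbb S_{30})$ contributes $\check L$ together with the $n_s$-correction terms that were built into the off-diagonal entries $b$ and $d$ of $\check G$.

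First I would collect the bookkeeping inputs that reduce the matching to a formal manipulation: the $\partial^2 = 0$ relations for all three flavours (equivalently the structural identities among $\del^o_o, \del^o_s, \del^u_o, \del^u_s$ and the reducible differentials $\bar\del^s_s, \bar\del^s_u, \bar\del^u_s, \bar\del^u_u$); the composition law for real cobordism maps from \cite[Proposition~13.3]{ljk2022}; Proposition~\ref{prop:Hidentity}, i.e. the identity $\check\del\check H_{20} + \check H_{20}\check\del + F_{10}F_{21} = 0$ and its analogue for the double composition $\Sigma_{31}$; and the vanishing $\bar n_s = 0$ of Corollary~\ref{cor:M30_dim}. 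The last input is what ensures that among the outputs of the corank-$1$ and corank-$2$ boundary-obstructed moduli spaces $M_z(\mathfrak a', \mathfrak a, \mathbb U^*, \mathfrak b)$ only $n_s$, $n_o$, $\bar n_u$ and $\bar m^{ss}_u(n_s \otimes \cdot)$ actually appear in $\check G$ and $\check L$, so that no spurious $\bar n_s$-terms are produced in the one-dimensional boundary count.

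With these in hand the entry-by-entry comparison is mechanical. For the $(1,1)$-entry one substitutes $a = G^o_o$ along with the matrix forms of $\check\del$, $\check m(\Sigma_{10})$, $\check H_{31}$, $\check H_{20}$, $\check m(\Sigma_{32})$ and $\check L$, and checks that the asserted equality of $(1,1)$-entries, after rewriting abbreviations such as $m^u_o\bar H^s_u = m^u_o(\Sigma_{10})\bar H^s_u(\Sigma_{31})$, is exactly the relation $A^{oo}_o = 0$ displayed in item \textbf{(g)} (signs being irrelevant over $\mathbb F_2$). The three remaining entries $b$, $c$, $d$ are handled identically, now invoking $A^{us}_o = 0$, $A^{oo}_s = 0$, $A^{us}_s = 0$ and the reducible relations $\bar A^{**}_* = 0$ from \cite[Section~5]{KMOS2007}. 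The only difference from the non-branched setting is that the formal dimensions of the moduli spaces involved are the real index formulae (cf. the index computation in the proof of Proposition~\ref{prop:Hidentity}), which affects none of the combinatorial patterns of broken configurations. The complementary claim that $\check L$ is chain-homotopic to the identity is established separately using $(\mathbb S_{30}, \del M_{30}) \cong (S^3, \text{U}_2)$ and the calculations of Examples~\ref{exmp:U1} and~\ref{exmp:U2}.

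The main obstacle I anticipate is the identification and careful treatment of the boundary-obstructed strata. One must verify that the gluing and obstruction-bundle analysis of \cite[Section~5]{KMOS2007} transfers verbatim to the real moduli spaces: that the middle factor of each broken triple product over the interior of $\bar P$ and over the edges $\bar Q(\mathbb Y_1), \bar Q(\mathbb Y_2)$ is boundary-obstructed of corank $1$; that the quadruple products over $\bar Q(\mathbb S_{30})$ contain a doubly boundary-obstructed corank-$2$ factor $M_z(\mathfrak a', \mathfrak a_1, \mathbb U^*, \mathfrak b_1)$; and that the genuine two-factor endpoint over $\bar Q(\mathbb S_{30})$ is $M_{z_1}(\mathbb B_{30}^*, M_{30}^*, \mathfrak a') \times M_{z_2}(\mathfrak a', \mathfrak a, \mathbb U^*, \mathfrak b)$ with $\mathfrak a'$ forced to be boundary-unstable. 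These are consequences of the transversality and compactness results already established for $\HMR^\circ$ in \cite{ljk2022}; granting them, the identity \eqref{eqn:gauge_triangle_isom} follows by collecting the four matrix-entry relations.
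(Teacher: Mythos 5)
Your proposal is correct and follows essentially the same route as the paper, which simply invokes \cite[Proposition~5.5]{KMOS2007} together with the enumeration of ends of one-dimensional moduli spaces over $\mathbb U^*$, the double and single compositions, and the cylinders — precisely the pentagon bookkeeping in items \textbf{(a)}--\textbf{(g)} that you spell out. Your added detail on matching matrix entries with the $A^{**}_*$ relations and on the boundary-obstructed strata is a faithful expansion of that same argument.
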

\begin{proof}
	The proof is the same as \cite[Proposition~5.5]{KMOS2007}, from ends of moduli spaces over
	\begin{equation*}
		\mathbb U^*, (\reals \times \mathbb Y, \Sigma_{i,i-2}^*) , (\reals \times \mathbb Y, \Sigma_{i,i-1}^*) \text{, and } (\reals \times \mathbb Y_i, \reals \times K_i).\qedhere
	\end{equation*}
\end{proof}
The final step is the following proposition, and the proof will occupy the succeeding subsection.
\begin{prop}\label{prop:Lisom}
	The map $\check L$ induces isomorphisms in homology.
\end{prop}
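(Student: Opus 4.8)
The plan is to follow \cite[Section~5]{KMOS2007}, using throughout --- as in the rest of Section~\ref{sec:prof_skein} --- that the real (i.e.\ $\upiota$-equivariant) moduli spaces behave formally like the ordinary Seiberg-Witten moduli spaces. Recall that $\check L$ is the cobordism map of the punctured product orbifold $\mathbb U^*\colon (\mathbb Y_3, K_3)\sqcup(\mathbb S_{30},\del M_{30})\to(\mathbb Y_0,K_0)$ with the element $\bar n_u\in C^u_{\bullet}(\del M_{30})$ inserted at the $(\mathbb S_{30},\del M_{30})$-end, where on branched covers $S_{30}=\Sigma_2(\mathbb S_{30},\del M_{30})\cong S^1\times S^2$ and the real Floer homologies of $\text{U}_2$ are those of Example~\ref{exmp:U2}. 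The crucial topological input is that $\mathbb U$ is the product cobordism with a puncture: capping the $(\mathbb S_{30},\del M_{30})$-end with the \emph{trivial} orbifold $(\mathbb B_{30},\Delta_0)$, with $\Delta_0$ a pair of standard discs, replaces $M_{30}$ by $\Delta_0$ and yields the product $([0,3]\times\mathbb Y,[0,3]\times K_0)$; on branched covers $\mathbb U\cup_{S^1\times S^2}(S^1\times D^3)\cong[0,3]\times Y_0\cong[0,1]\times Y_0$, which induces the identity on $\widecheck{\HMR}$ by the functoriality carried over from \cite{KMbook2007,ljk2022}.

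First I would pin down $\bar n_u$. By Corollary~\ref{cor:M30_dim} the count $\bar n_s$ vanishes, so $\bar n_u$ is the only surviving reducible contribution of $(\mathbb B_{30}^*,M_{30}^*)$, and the dimension formula (the adaptation of \cite[Proposition~5.2]{KMOS2007} together with the self-intersection $+2$ of the $\mathbb{RP}^2\subset S^4$ summand of $M_{30}$) shows it represents the bottom element of the boundary-unstable tower of $\overline{\HMR}_{\bullet}(\text{U}_2)$. I would then compare the reducible moduli spaces over $N=\Sigma_2(\mathbb B_{30},M_{30})\cong(S^2\times D^2)\#_{S_{31}}\overline{\mathbb{CP}}^2\cong(S^2\times D^2)\#_{S_{20}}\overline{\mathbb{CP}}^2$ with those over the trivial cap $\Sigma_2(\mathbb B_{30},\Delta_0)=S^1\times D^3$: stretching the connect-sum neck --- along $S_{31}$ for $T<0$ and along $S_{20}$ for $T>0$, which is exactly the parameter $\bar Q(\mathbb S_{30})$ used to define $\bar n_u$ --- only the spin\textsuperscript{c} structure minimizing $c_1^2$ on the $\overline{\mathbb{CP}}^2$ summand contributes a single reducible point and all others contribute nothing of the correct dimension. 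This is the same bookkeeping as in the proof of Proposition~\ref{prop:Hidentity}, with the positive-scalar-curvature assumption on $N_{20}$ there replaced by its analogue here. The outcome is that $\bar n_u$ equals the reducible relative invariant of $(\mathbb B_{30},\Delta_0)=S^1\times D^3$ in $\overline{\HMR}_{\bullet}(\text{U}_2)$ (and, after $i_*$, in $\widecheck{\HMR}_{\bullet}$).

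Next I would run the neck-stretching along $S_{30}\cong S^1\times S^2$. By the gluing theorem for moduli spaces over $\mathbb U^*\cup(S^1\times D^3)^*$ --- a composition law of exactly the type proved in \cite[Proposition~13.3]{ljk2022} --- inserting the reducible relative invariant of the trivial cap into the $\mathbb S_{30}$-end of $\mathbb U$ recovers, on the chain level up to chain homotopy, the cobordism map of the product $[0,3]\times Y_0$. Combined with the previous step, $\check L$ is chain homotopic to the product cobordism map, hence to the identity, and in particular induces an isomorphism in homology. (Equivalently, on the $U$-completed complex $\check L$ has the shape $\mathrm{id}+(\text{strictly }U\text{-positive})$ --- the correction terms coming from non-minimal spin\textsuperscript{c} structures and higher-dimensional strata, whose degrees are controlled by \eqref{eqn:deg_cob_sigma} --- and such a map is automatically invertible.)

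The main obstacle is the neck-stretching analysis along $S^1\times S^2$: since $b_1(S^1\times S^2)=1$ the reducible locus is a positive-dimensional family, the Floer homology has the two-tower structure of Example~\ref{exmp:U2}, and boundary-obstructed strata appear. One must check that broken configurations over the stretched neck which could contribute to $\check L$ beyond the product term either cancel in pairs (as in Proposition~\ref{prop:Hidentity}) or are absorbed into the chain homotopy, and that the real structure produces no new contributions --- the relevant facts being that the deck involution on $N$ swaps conjugate real spin\textsuperscript{c} structures and that $(\mathbb S_{30},\del M_{30})\cong(S^3,\text{U}_2)$ admits an invariant positive-scalar-curvature metric with no real irreducibles, so that $\bar n_s=0$ and the real index computations land where the ordinary ones do. Everything here is parallel to \cite[Section~5]{KMOS2007}; the genuinely new content is the verification of the \emph{real} index identities underlying the identification of $\bar n_u$.
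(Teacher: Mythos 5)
There is a genuine gap, and it sits exactly at the heart of the paper's argument. Your plan reduces everything to the claim that, after stretching along $\mathbb S_{31}$ and $\mathbb S_{20}$, ``only the spin\textsuperscript{c} structure minimizing $c_1^2$ on the $\overline{\mathbb{CP}}^2$ summand contributes a single reducible point and all others contribute nothing of the correct dimension,'' so that $\bar n_u$ is the relative invariant of the trivial cap $S^1\times D^3$. Neither half of this is justified. First, the non-minimal real spin\textsuperscript{c} structures $\mathfrak t_k$ do contribute: for every $k$ there are zero-dimensional reducible moduli spaces $M_k(M_{30}^*,\mathfrak a^1_{i_k})_{\bar Q}$ and $M_k(M_{30}^*,\mathfrak a^0_{i_k-1})_{\bar Q}$ with $i_k=-1-k(k+1)/2$, so $\bar n_u$ is an infinite sum over both towers of $\text{U}_2$; these extra terms are lower order in the filtration and harmless, but they do not vanish. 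Second, and more seriously, the top-degree coefficient of $\bar n_u$ is $a_0+a_{-1}$, the combined count for the \emph{two} minimal spin\textsuperscript{c} structures $\mathfrak t_0$ and $\mathfrak t_{-1}$, which are conjugate under the real structure. The bookkeeping you invoke from Proposition~\ref{prop:Hidentity} is precisely the mechanism by which conjugate contributions cancel in pairs mod $2$; applied naively here it would suggest $a_0+a_{-1}=0$, i.e.\ that the leading term of $\check L$ vanishes, which is the opposite of what you need. Establishing $a_k^1+a^1_{-1-k}=1\pmod 2$ (Lemma~\ref{lem:sumis1}) is the genuinely nontrivial step: the paper proves it by studying the family of real abelian ASD connections on $N^*$, the resulting maps $\theta_k,\theta_{-1-k}:\bar Q\to\mathcal S$ into the circle of real flat connections on $S_{30}$, and the nonzero mod-$2$ degree of the glued cycle $\theta_k\cup\theta_{-1-k}$, following the model of \cite[Lemma~5.10]{KMOS2007}. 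Your proposal contains no substitute for this degree argument, so the invertibility of $\check L$ is not actually established.

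The remaining ingredients of your plan do match the paper. The identification of the leading operator with the identity is the paper's Lemma~\ref{lem:L1-1}: remove $N$ from $W_{30}$, glue in $S^1\times B^3$ with an invariant PSC metric, and stretch along $S_{30}$ to produce a chain homotopy from the cylinder map to $\sum_{j,\mu} b^\mu_j\,\check L^\mu_j$, where only $b^1_{-1}=1$ survives; and the final step ``identity plus strictly lower-order terms is invertible on the completed complex'' is exactly how the paper concludes, using the expansion $\check L=(a_0+a_{-1})\check L^1_{-1}+(\text{lower order})$ together with Corollary~\ref{cor:M30_dim} ($\bar n_s=0$). So the structure is right, but the proposal as written is missing the one computation that makes it work.
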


\subsection{Proof of Proposition~\ref{prop:Lisom}}
\hfill \break
Let us take a closer look at the double branched cover picture in Part~\textbf{(f)}.
The cobordism $W_{30}$ decomposes into two parts
\begin{equation*}
	W_{30} = U \cup_{S_{30}} N,
\end{equation*}
where $U$ and $N$ are branched covers of $\mathbb U$ and $(\mathbb B_{30}, M_{30})$, respectively, and  $S_{30} = \Sigma_2(\mathbb S_{30}, \text{U}_2)$ is diffeomorphic to $ S^1 \times S^2$.

The manifold $N$ contains two 2-spheres $E_2$ and $E_3$ of self-intersection $(-1)$ and $E_2 \cdot E_3 = 1$.
Recall there are two ways of representing $N$ as blow-ups 
\[
	N \cong (S^2 \times D^2) \#_{S_{i,i-2}} \overline{\mathbb{CP}}^2	
\]
for $i = 2$ or $3$. 
The sphere $S_{i,i-2}$ can be thought of as the boundary of a regular neighbourhood of $E_{i}$.

The $\bar Q$-family of invariant Riemannian metric is constant on $U$. 
On the $N$ part, $\bar Q$ stretches the neck along the sphere $S_{31}$ for $[-\infty,0)$ and along  $S_{20}$ for $(0,+\infty]$.
We arrange the metric so that it is cylindrical and has positive scalar curvature near the boundary $S_{30}$, and at $T=\pm \infty$ has positive scalar curvature over the punctured $\overline{\mathbb{CP}}^2$-components.

A spin\textsuperscript{c} structure is uniquely determined by the first Chern class evaluated on the divisor $E_2$.
We denote $\mathfrak t_k$ the spin\textsuperscript{c} structure with the property
\begin{equation*}
	\langle c_1(\mathfrak t_k), E_2\rangle = 2k + 1.
\end{equation*}
Since $N$ is the branched cover of the 4-ball $\mathbb B_{30}$, any spin\textsuperscript{c} structure admits a unique compatible real structure.
We continue to suppress the choice of real structures in notations.
The conjugate real spin\textsuperscript{c} structure of $\mathfrak t_k$ by this convention is $\mathfrak t_{-1-k}$.

\begin{figure}[hbt]
  \includegraphics[width=3in]{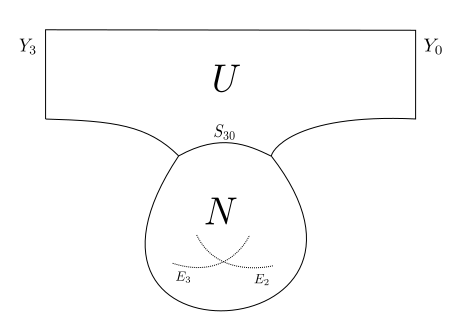}
  \caption{Decomposition of $W_{30}$ along $S_{30}$.}
\end{figure}

Recall the chain complexes of $\HMR^{\circ}(\text{U}_2)$ with a positive scalar curvature metric on $S^1 \times S^2$ and a small regular perturbation in Example~\ref{exmp:U2}: there are two towers of critical points
\begin{equation*}
	\{\mathfrak a^1_i\}, \{\mathfrak a^0_i\}: i \in \mathbb Z,
\end{equation*}
and all differentials are zero.

The next lemma follows from \cite[Lemma~5.7]{KMOS2007}, where the factors of $(1/2)$ arise from taking the real parts of the indices of Dirac operators.
\begin{lem}
\label{lem:M30_dim_stable}
	The dimension of the real moduli space $M_k(M_{30}^*,\mathfrak a_i^{\mu})_{\bar Q}$ is given by
	\begin{equation*}
		\gr_{k}(M_{30}^*,\mathfrak a_i^{\mu})_{\bar Q} = \begin{cases}
			-\mu - k(k+1)/2 - i + 1, & i \ge 0,\\
			-\mu - k(k+1)/2 - i , & i < 0,
		\end{cases}
	\end{equation*}
	for $\mu=0,1$.
\end{lem}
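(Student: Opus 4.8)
The plan is to reduce the index computation to two pieces via the excision/gluing formula for the expected dimension on the branched cover $N$ of $(\mathbb{B}_{30}, M_{30})$, decomposed as $N \cong (S^2 \times D^2) \#_{S_{i,i-2}} \overline{\mathbb{CP}}^2$. First I would set up the $(-\upiota^*)$-equivariant linearized Seiberg-Witten operator over $M_{30}^*$ with cylindrical end asymptotic to the reducible critical point $\mathfrak{a}_i^\mu$ on $\Sigma_2(\mathbb{S}_{30},\text{U}_2) \cong S^1 \times S^2$, and split its index (``$\gr$'') as the sum of a topological ASD-type contribution and a real Dirac contribution, exactly as in the real grading formula of Section~\ref{sec:Froyshov}. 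The key point, parallel to \cite[Lemma~5.7]{KMOS2007}, is that the ASD part of the index is computed from $\chi$, $\sigma$, and $b_1$ of the relevant pieces together with the dependence on the spin\textsuperscript{c} structure $\mathfrak{t}_k$, while the spinorial part is governed by the real (half-)index of a twisted Dirac operator over the branched cover. Because the real structure $\uptau$ anti-commutes with multiplication by $\sqrt{-1}$, the relevant index is the honest complex index divided by two in the right places — this is precisely where the factors of $1/2$ in the statement originate.

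The main computation proceeds in three steps. Step one: compute the contribution of the $\overline{\mathbb{CP}}^2$ summand. For the spin\textsuperscript{c} structure $\mathfrak{t}_k$ with $\langle c_1(\mathfrak{t}_k), E\rangle = 2k-1$ on the $(-1)$-sphere, the standard blow-up formula gives a change in $c_1^2$ of $-(2k-1)^2$, contributing $-\tfrac{1}{8}\big((2k-1)^2-1\big) = -\tfrac{k(k-1)}{2}$ to the real index once the overall $1/8$ and the $1/2$-reduction are accounted for; matching with the indexing convention $\langle c_1(\mathfrak{t}_k),E_2\rangle = 2k+1$ used in Part~\textbf{(f)} shifts this to $-\tfrac{k(k+1)}{2}$. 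Step two: compute the contribution of the $S^2 \times D^2$ (i.e. $\mathbb{B}_{30}$-covering) part together with the asymptotic correction at $\mathfrak{a}_i^\mu$. This is where the linear term $-i$ and the $\mu$-shift enter: the critical points $\mathfrak{a}_i^\mu$ on $S^1\times S^2$ sit in towers with real index increasing by one (not two), so moving up the tower by $i$ shifts the expected dimension by $-i$, and the two towers $\mu=0,1$ differ by a Morse-index shift of $1$, contributing $-\mu$. Step three: pin down the constant $+1$ in the range $i \ge 0$ versus $0$ in the range $i<0$. This discontinuity is the usual boundary-stable/boundary-unstable correction — crossing the reducible locus as $i$ passes from negative to non-negative changes whether the smallest relevant eigenvalue of the perturbed Dirac operator is counted, producing the jump by one, exactly as in \cite[Section~14]{KMbook2007} and the corresponding jump in Proposition~\ref{prop:Hidentity}'s dimension formula for $M_{z_k}(M_{20},\mathfrak{a}_i)$.

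I expect the main obstacle to be bookkeeping the $(-\upiota^*)$-equivariant index over the \emph{branched} cover correctly: one must verify that the excision formula for the equivariant ASD operator behaves additively across $S_{i,i-2}$ (which it does, since that sphere has positive scalar curvature and the equivariant cohomology of the gluing region vanishes in the relevant degree), and that the asymptotic operator at $\mathfrak{a}_i^\mu$ on $S^1 \times S^2$ has the spectral flow one predicts from Example~\ref{exmp:U2}. A secondary subtlety is that $M_{30}^*$ has two cylindrical ends in the earlier moduli-space discussion but here we are looking at the piece with a single end at $S_{30}$ after the $\bar Q$-family collapses — so I would be careful to use the convention in which the $\bar Q$-parameter is already incorporated, consistent with how $M_k(M_{30}^*,\mathfrak{a}_i^\mu)_{\bar Q}$ is defined in Part~\textbf{(f)}, so that the stated formula is the fibrewise expected dimension plus the one-dimensional parameter contribution. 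Once these equivariant index identities are in place, assembling Steps one through three gives the claimed formula, and the proof reduces, as stated, to citing \cite[Lemma~5.7]{KMOS2007} with the indices halved on the Dirac factor.
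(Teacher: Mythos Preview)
Your proposal is correct and follows the same approach the paper takes: the paper's proof is a one-line citation of \cite[Lemma~5.7]{KMOS2007} together with the remark that the factors of $1/2$ arise from taking the real part of the Dirac index, and your Steps one--three simply unpack that citation (the $\overline{\mathbb{CP}}^2$ blow-up contribution, the tower shift on $S^1\times S^2$, and the stable/unstable jump) with the halving applied. Your phrasing around ``the overall $1/8$ and the $1/2$-reduction'' is slightly muddled---the $1/8$ already gives the complex index, and the halving relative to \cite{KMOS2007} is the passage from $2\cdot\ind_{\mathbb C}$ to $\ind_{\mathbb C}$---but the arithmetic and the conclusion are right.
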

The following corollary concerning moduli spaces asymptotic to boundary-stable critical point is similar to \cite[Corollary~5.7]{KMOS2007} but slightly subtler. 
\begin{cor}	
\label{cor:M30_dim}
	If $\mathfrak a'$ is a boundary-stable critical point for which the corresponding $M_k(M_{30}^*,\mathfrak a')_{\bar Q}$ is nonempty, then one of the following must hold.
	\begin{enumerate}
		\item $k \in \{0,-1\}$ and $\mathfrak a' = \mathfrak a^0_0$, where $\gr_{k}(M_{30}^*,\mathfrak a')_{\bar Q} = 1$.
		\item $k \in \{1,-2\}$ and $\mathfrak a' = \mathfrak a^0_0$, where $\gr_{k}(M_{30}^*,\mathfrak a')_{\bar Q} = 0$.
		\item $k \in \{0,-1\}$ and $\mathfrak a' \in \{\mathfrak a^0_1,\mathfrak a^1_0\}$, where $\gr_{k}(M_{30}^*,\mathfrak a')_{\bar Q} = 0$.
	\end{enumerate}
	The reducible moduli space $M_k^{\red}(M_{30}^*,\mathfrak a')_{\bar Q}$ is empty whenever $\mathfrak a'$ is boundary-stable, except possibly in case (1) above.
	If $k \in \{0,-1\}$ and $\mathfrak a' = \mathfrak a^0_0$, then 
	\[
		\# M_k^{\red}(M_{30}^*,\mathfrak a^0_0)_{\bar Q} =0.
	\] 
	Therefore the element $\bar n_s$ is zero.
\end{cor}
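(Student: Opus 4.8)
The plan is to run the argument of \cite[Corollary~5.7]{KMOS2007}, feeding in the real index formula of Lemma~\ref{lem:M30_dim_stable} in place of their expected-dimension count, and then using the positive scalar curvature arranged on $N$ together with the conjugation symmetry $\mathfrak t_k\leftrightarrow\mathfrak t_{-1-k}$ to dispose of the few borderline strata.

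First I would record which reducible critical points on $S_{30}\cong S^1\times S^2$ are boundary-stable: reading off the identification $\widecheck{\HMR}_\bullet(\text{U}_2)=C^s$ from Example~\ref{exmp:U2}, these are exactly the $\mathfrak a^\mu_i$ with $i\ge0$ and $\mu\in\{0,1\}$. Restricting Lemma~\ref{lem:M30_dim_stable} to this range gives
\[
	\gr_k(M_{30}^*,\mathfrak a^\mu_i)_{\bar Q}=-\mu-\frac{k(k+1)}{2}-i+1 ,
\]
and since $k(k+1)/2\ge0$ --- equal to $0$ precisely for $k\in\{0,-1\}$ and to $1$ precisely for $k\in\{1,-2\}$ --- the inequality $\gr_k(M_{30}^*,\mathfrak a^\mu_i)_{\bar Q}\ge0$ is equivalent to $\mu+k(k+1)/2+i\le1$, whose finitely many solutions are precisely the pairs $(k,\mathfrak a')$ listed in (1)--(3), with the stated formal dimensions. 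Because $N$ carries positive scalar curvature away from the necks, none of these moduli spaces contains an interior critical point (exactly as in the proof of Proposition~\ref{prop:Hidentity}), so each is reducible and any value $\gr_k<0$ forces emptiness; this is the first assertion.

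It remains to treat the reducible moduli spaces with boundary-stable asymptotics. In cases (2) and (3) the formal dimension is $0$, and their emptiness follows the positive-scalar-curvature argument used for the two exceptional cases in the proof of Proposition~\ref{prop:Hidentity}: at $T=\pm\infty$ the family $\bar Q$ breaks $N$ along $S_{31}$, resp.\ $S_{20}$, into a piece diffeomorphic to a once-punctured $\overline{\mathbb{CP}}^2$ of positive scalar curvature, and (using $b^+(N)=0$, so that the ASD connection is rigid) the corresponding reducible moduli spaces are empty. In case (1) the formal dimension is $1$, so this stratum never enters the count defining $\bar n_s$; the stated vanishing of its mod-$2$ count, as well as the conclusion $\bar n_s=0$ itself, follow from the fact that the deck transformation of $N\to\mathbb B_{30}$ interchanges the conjugate real spin\textsuperscript{c} structures $\mathfrak t_k$ and $\mathfrak t_{-1-k}$ and identifies the associated real moduli spaces (as in Proposition~\ref{prop:Hidentity}). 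Since $k\ne-1-k$ for every $k$, each boundary-stable contribution to $\bar n_s$ is matched with a distinct equal one, and they cancel over $\mathbb F_2$.

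The main obstacle is the bookkeeping for reducibles with boundary-stable asymptotics in the borderline regimes --- formal dimension $0$ in cases (2)--(3), and the lowest eigenmode $\mathfrak a^0_0$ in case (1) --- where the naive transversality/index count is inconclusive: one must push the positive-scalar-curvature exclusion through the entire one-parameter family $\bar Q$ even though $N$ is PSC only near its necks and ends, and keep precise track of how the covering involution acts on the family of metrics so that the conjugation-symmetry cancellation is legitimate.
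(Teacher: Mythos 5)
Your reduction of the first assertion to Lemma~\ref{lem:M30_dim_stable} (solving $\mu+k(k+1)/2+i\le 1$ over the boundary-stable range $i\ge 0$) is the same as the paper's and is fine. The trouble starts with the reducible statements. For cases (2)--(3) the paper does not need, and could not use, a positive-scalar-curvature argument: PSC is only arranged near $S_{30}$ and, at $T=\pm\infty$, on the punctured $\overline{\mathbb{CP}}^2$ pieces, and in any case PSC excludes irreducibles rather than reducibles; emptiness at the two broken endpoints of $\bar Q$ says nothing about interior parameters. The correct reason, which is the one the paper gives, is dimensional: for a boundary-stable limit the reducible locus $M_k^{\red}(M_{30}^*,\mathfrak a')_{\bar Q}$ has dimension one less than $\gr_k(M_{30}^*,\mathfrak a')_{\bar Q}$, hence negative outside case (1), hence empty by regularity.

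The more serious gap is your final step. The conjugation/deck-transformation pairing cannot prove $\#M_k^{\red}(M_{30}^*,\mathfrak a^0_0)_{\bar Q}=0$, and it is inconsistent with the rest of the paper: over the family $\bar Q$ with asymptotic limits on $S_{30}=\Sigma_2(S^3,\text{U}_2)=S^1\times S^2$, the counts attached to conjugate real spin\textsuperscript{c} structures are \emph{not} equal. Indeed Lemma~\ref{lem:sumis1} proves $a^1_k+a^1_{-1-k}=1$ mod $2$; if the symmetry identified the parametrized moduli spaces with a fixed asymptotic critical point, the same pairing would give $a^\mu_k=a^\mu_{-1-k}$, hence $\bar n_u=0$ and $\check L=0$, contradicting Proposition~\ref{prop:Lisom}. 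The pairing used in Proposition~\ref{prop:Hidentity} is legitimate there because the metric on $N_{20}$ at $T=-\infty$ is fixed and the limits lie on $\Sigma_2(S^3,\text{U}_1)=S^3$, whose critical points are preserved; on $S^1\times S^2$ conjugation acts on the circle of flat connections through the involution $\sigma$ and does not fix the relevant critical points, which is exactly why the paper only pairs terms and establishes parity of sums. Note also that the corollary asserts vanishing for each individual $k\in\{0,-1\}$, which a pairing of $k$ with $-1-k$ could never deliver. The paper's actual argument is different and you are missing it: since $\gr_k=1$ in case (1), one considers the compactified one-dimensional moduli space $M_k^+(M_{30}^*,\mathfrak a^0_0)_{\bar Q}$ and lists its ends; the broken ends, which involve trajectories on $S_{30}$ (possibly with a boundary-obstructed middle factor), contribute zero mod $2$ because all differentials for $\text{U}_2$ vanish (Example~\ref{exmp:U2}); the fibres over $T=\pm\infty$ are empty by the PSC arrangement on the punctured $\overline{\mathbb{CP}}^2$ components; the remaining ends are precisely the points of $M_k^{\red}(M_{30}^*,\mathfrak a^0_0)_{\bar Q}$, so parity of the boundary of a compact $1$-manifold forces that count to be zero, and hence $\bar n_s=0$.
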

\begin{proof}[Proof of Corollary~\ref{cor:M30_dim}]
	By Lemma~\ref{lem:M30_dim_stable}, the dimension of $M_k^{\red}(M_{30}^*,\mathfrak a')_{\bar Q}$ is negative as long as we are not in case (1).
	Assume $\mathfrak a' = \mathfrak a^0_0$ and $M_k^{\red}$ is zero-dimensional and nonempty.
	Then the compactification $M_k^+(M_{30}^*,\mathfrak a')_{\bar Q}$ is an 1-dimensional manifold with boundary, whose ends consist of the following.
	\begin{enumerate}[leftmargin=*,label=(\roman*)]
		\item $M_{z_0}(M_{30}^*,\mathfrak b)_{Q} \times M_{z_1}(\mathfrak b, \del M_{30}, \mathfrak a^0_0)$, where $\mathfrak b \in \{\mathfrak a^0_1,\mathfrak a^1_0\}$.
		\item  $M_{z_0}(M_{30}^*,\mathfrak b_1)_{Q} \times M_{z_1}(\mathfrak b_1, \del M_{30}, \mathfrak b_2) \times  M_{z_2}(\mathfrak b_2, \del M_{30}, \mathfrak a^0_0)$, \\
		where the middle term is boundary-obstructed.
		\item $M_k^{\red}(M_{30}^*,\mathfrak a^0_0)_{\bar Q}$
		\item $M_k(M_{30}^*,\mathfrak a^0_0)_{\pm \infty} \cong 
		M_{z_0}((\overline{\mathbb{CP}}^2 \setminus \Ball)^*, \mathfrak b)  \times M_{z_1}(\mathfrak b, (S^2 \times D^2 \setminus \Ball)^* ,\mathfrak a^0_0)$, \\
		where the  right hand side consists of irreducible real solutions on the disjoint union.
	\end{enumerate}
	Ends of type (i) and (ii) contribute zero mod two because of the trivial  differentials over $S_{30} = \Sigma_2(S^3,\text{U}_2)$.
	Type (iv) ends are empty, as the metrics at $\pm \infty$ were assumed to have positive scalar curvature on $\overline{\mathbb{CP}}^2$.
\end{proof} 
In the ordinary monopole Floer homology  \cite{KMOS2007},
for parity reason the critical points $\mathfrak a^0_j$ never appear in the zero dimensional moduli spaces.
In the real setting, both towers of reducibles over $S^1\times S^2$ contribute, as demonstrated by the following counterpart of \cite[Corollary~5.8]{KMOS2007}.
\begin{cor}
	There are two types of zero-dimensional moduli spaces that are asymptotic to boundary-unstable critical points:
	\begin{equation}
	\label{eqn:mod_space_Q}
			M_k(M_{30}^*,\mathfrak a^1_{i_k})_{\bar Q} \quad \text{and} \quad 	M_k(M_{30}^*,\mathfrak a^0_{i_k-1})_{\bar Q},
	\end{equation}
	where $i_k = -1-k(k+1)/2$. The moduli spaces consist entirely of reducibles.
\end{cor}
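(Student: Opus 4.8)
The plan is to transplant \cite[Corollary~5.8]{KMOS2007} to the real setting, where the assertion breaks into a numerical step --- identifying the boundary-unstable critical points $\mathfrak a'$ with $M_k(M_{30}^*,\mathfrak a')_{\bar Q}$ effectively zero-dimensional --- and a structural step --- showing that such a moduli space contains no irreducibles. For the numerical step I would recall from Example~\ref{exmp:U2} that the perturbed critical set on $\Sigma_2(S^3,\text{U}_2)\cong S^1\times S^2$ is the union of the towers $\{\mathfrak a^1_i\}_{i\in\mathbb Z}$ and $\{\mathfrak a^0_i\}_{i\in\mathbb Z}$ sitting over the two flat connections, with the lower half of each tower --- the spinors belonging to negative eigenvalues of the perturbed Dirac operator --- boundary-unstable. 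Feeding these into the dimension formula of Lemma~\ref{lem:M30_dim_stable} and imposing vanishing formal dimension, but in the sense appropriate to the boundary-obstructed spaces in play (the corank-two phenomenon already visible in the spaces defining $\bar m^{ss}_u$, where the naive formal dimension exceeds the effective one), singles out $\mathfrak a^1_{i_k}$ from the $\mu=1$ tower and $\mathfrak a^0_{i_k-1}$ from the $\mu=0$ tower, where $i_k=-1-k(k+1)/2$. That two families appear, rather than the single family of the ordinary case, is the arithmetic signature of the factor $1/2$ in the \emph{real} Dirac index: the gradings along each tower step by $1$ instead of $2$, so both parity classes fall inside the dimension window.

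For the structural step I would show that a solution on $M_{30}^*$ asymptotic along its single cylindrical end to a boundary-unstable critical point $\mathfrak a'$ is automatically reducible, so that $M_k(M_{30}^*,\mathfrak a')_{\bar Q}=M_k^{\red}(M_{30}^*,\mathfrak a')_{\bar Q}$. No positive scalar curvature input is needed here, in contrast with the boundary-\emph{stable} situation of Corollary~\ref{cor:M30_dim}: at a boundary-unstable critical point the Hessian of the Chern--Simons--Dirac functional is negative in the direction normal to the boundary of the blown-up configuration space, so along any gradient trajectory converging to $\mathfrak a'$ the normal coordinate $r$ satisfies $\dot r\sim\lambda r$ with $\lambda>0$ near $\mathfrak a'$ and hence vanishes identically on the far part of the end. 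The solution is then reducible on an open subset of the connected $4$-manifold $M_{30}^*$, and unique continuation for the Seiberg--Witten equations (see \cite{KMbook2007} and its real counterpart in \cite{ljk2022}) propagates reducibility everywhere. This is the same mechanism that underlies \cite[Corollary~5.8]{KMOS2007}.

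The structural step is robust, so I expect the real work to be in the numerical step: getting the index arithmetic exactly right --- in particular verifying that the grading shift for boundary-unstable critical points combined with the boundary-obstruction behaviour of the $\mu=0$ spaces pins the second index to $i_k-1$ and not to a neighbouring value --- and checking that the $\bar Q$-family of moduli spaces over $M_{30}^*$ can be made regular near its reducible locus, so that the effective dimensions and the resulting mod-$2$ counts defining $n_o$, $n_s$ and $\bar n_u$ are well posed. Both mirror \cite[Section~5]{KMOS2007}, and carrying them out draws only on the transversality and index theory already set up in \cite{ljk2022}.
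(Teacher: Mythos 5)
Your structural step is sound and is the intended mechanism: the only end of $M_{30}^*$ is an outgoing end modelled on $S_{30}$, the stable manifold of a boundary-unstable critical point lies in the reducible locus of the blown-up configuration space, so any solution converging to such a point has vanishing spinor along the end, and unique continuation forces it to be reducible on all of $M_{30}^*$. No positive scalar curvature input is needed for this half, exactly as you say, and this is all the paper leaves implicit when it states the corollary as the counterpart of \cite[Corollary~5.8]{KMOS2007}.

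The gap is in the numerical step, which is the actual content of the statement, and you have not carried it out. Two concrete problems. First, the mechanism you invoke --- the corank-two boundary obstruction ``already visible in the spaces defining $\bar m^{ss}_u$'' --- concerns moduli over $\mathbb U^*$, which has two incoming ends with boundary-stable limits and one outgoing end with a boundary-unstable limit; it does not apply to $M_k(M_{30}^*,\mathfrak a')_{\bar Q}$, whose single end is outgoing, so it cannot be what ``pins'' the $\mu=0$ index. Second, if one actually substitutes into Lemma~\ref{lem:M30_dim_stable} (the $i<0$ branch, since the boundary-unstable critical points of Example~\ref{exmp:U2} are those with $i<0$, consistently with the boundary-stable cases treated in Corollary~\ref{cor:M30_dim}) and sets the dimension to zero, one gets $i=-\mu-k(k+1)/2$, i.e.\ $\mathfrak a^1_{i_k}$ for the $\mu=1$ tower but $\mathfrak a^0_{i_k+1}$ --- not $\mathfrak a^0_{i_k-1}$ --- for the $\mu=0$ tower, a discrepancy of two. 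So the identification \eqref{eqn:mod_space_Q} does not follow from the dimension formula by the naive count you sketch; establishing it requires either the correct accounting of how the dimension of these entirely reducible moduli spaces at a boundary-unstable outgoing limit differs from $\gr_z$ (the analogue of the $i\ge 0$ versus $i<0$ shift already visible in Lemma~\ref{lem:M30_dim_stable} and in the index formulae of Proposition~\ref{prop:Hidentity}), or a careful reconciliation of the tower-labelling conventions of Example~\ref{exmp:U2} with those of \cite[Corollary~5.8]{KMOS2007}. You yourself defer exactly this (``the real work is in the numerical step''), so as written the proposal proves reducibility but assumes, rather than derives, which moduli spaces are the zero-dimensional ones --- which is the point of the corollary.
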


The element $\bar n_u \in C^u_{\bullet}(\del M_{30})$
 can written as a sum of two series
\begin{align*}
	\bar n_u 
	&= 
	\sum_k a_k^1 e^1_{i_k} +
	\sum_k a_k^0 e^0_{i_k -1} \\
	&=
	(a_0 + a_{-1})e^1_{-1} + 
	\sum_{j=2}^{\infty} (a_{-j}^1 + a^1_{-1+j} ) e^1_{-j}
	+ 
	\sum_{j=1}^{\infty} (a_{-j}^0 + a_{-1+j}^0 ) e^0_{-j},
\end{align*}
where $a_k^{\mu}$ counts the points in the moduli spaces in \eqref{eqn:mod_space_Q}. 
In the above expression, we have separated out the highest degree term and paired the summands belonging to conjugate spin\textsuperscript{c} structures.
We denote the map $\check L^{\mu}_{i}$ as the cobordism induced map where we only insert $e^{\mu}_i$.
Then $\check L$ can be written as the sum
\begin{equation*}
	\check L = (a_0+a_{-1})\check L^1_{-1} + 
	\sum_{j=2}^{\infty} (a_{-j}^1 + a^1_{-1+j} ) \check L^1_{-j}
	+ 
	\sum_{j=1}^{\infty} (a_{-j}^0 + a_{-1+j}^0 ) \check L^0_{-j},
\end{equation*}
where $(a_0+a_{-1})\check L^1_{-1}$ is the lowest order summand of $\check L$.

It suffices to show $(a_0+a_{-1})\check L^1_{-1}$ is an isomorphism, and this is a consequence of the following two lemmata.
\begin{lem}
\label{lem:sumis1}
	The sum $a_{k}^1 + a^1_{-1-k}$ is $1$ mod $2$ for any $k$.	
\end{lem}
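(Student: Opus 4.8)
The plan is to adapt, almost verbatim, the computation in \cite[Section~5]{KMOS2007} that proves the analogous count in ordinary monopole Floer homology, replacing complex Dirac indices by their real counterparts. First I would observe that $i_k=i_{-1-k}$, so $a_k^1$ and $a_{-1-k}^1$ are the mod $2$ counts of the parametrized moduli spaces $M_k(M_{30}^*,\mathfrak a^1_{i_k})_{\bar Q}$ and $M_{-1-k}(M_{30}^*,\mathfrak a^1_{i_k})_{\bar Q}$, which differ only in the choice of spin\textsuperscript{c} structure ($\mathfrak t_k$ versus its conjugate $\mathfrak t_{-1-k}$) on $N$. As recorded in the preceding corollary, each such moduli space consists entirely of reducibles, so a point is a pair $([A],\psi)$ with $A$ the $\bar Q$-family ASD connection on $N^*$ in the relevant spin\textsuperscript{c} structure, asymptotic over the end $S_{30}\cong S^1\times S^2$ to the flat connection underlying $[\alpha^1,0]$, and $\psi$ a unit eigenvector of the perturbed Dirac operator whose spectral slot records $\mathfrak a^1_{i_k}$. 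Since $N$ is a double branched cover of the $4$-ball $\mathbb B_{30}$ (so $b^+_{-\upiota^*}(N)=0$), the ASD connection is unique in each spin\textsuperscript{c} structure, and $a_k^1$ becomes a wall-crossing/spectral-flow count for the family of twisted Dirac operators $\{D_{A(T)}\}_{T\in\bar Q}$, $\bar Q\simeq[-\infty,+\infty]$, at the eigenvalue level labelled by $\mathfrak a^1_{i_k}$; the sum $a_k^1+a_{-1-k}^1$ is this count for the disjoint union of the two conjugate spin\textsuperscript{c} structures.

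Next I would evaluate the two ends of the family. At $T=-\infty$ the metric breaks $N$ along $S_{31}$, using the decomposition $N\cong (S^2\times D^2)\#_{S_{31}}\overline{\mathbb{CP}}^2$, into a neighbourhood of $E_3$ (diffeomorphic to $\overline{\mathbb{CP}}^2\setminus\Ball$) glued to its complement $\bigl((S^2\times D^2)\setminus\Ball\bigr)$; at $T=+\infty$ it breaks along $S_{20}$ into a neighbourhood of $E_2$ glued to its complement. On the $\overline{\mathbb{CP}}^2\setminus\Ball$ factor the metric has positive scalar curvature, so exactly as in the proof of Corollary~\ref{cor:M30_dim} and Example~\ref{exmp:U1} there are no irreducibles and the Dirac spectrum is simple and nonzero; the contribution of that factor to the count is therefore rigid and read off from the index formula, and likewise for the flat factor $(S^2\times D^2)\setminus\Ball$. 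Feeding in the self-intersection data $E_i^2=-1$, $E_2\cdot E_3=1$ and the dimension formula of Lemma~\ref{lem:M30_dim_stable} pins down the asymptotic critical point at each end as an explicit function of $k$, and the ends for $\mathfrak t_k$ are interchanged with those for $\mathfrak t_{-1-k}$ by the deck transformation of $N$ over $\mathbb B_{30}$, which swaps $\mathfrak t_k\leftrightarrow\mathfrak t_{-1-k}$ together with the two decompositions (hence $T\leftrightarrow -T$ on $\bar Q$).

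The punch line is that this symmetry forces an odd total: because $\langle c_1(\mathfrak t_k),E_2\rangle=2k+1$ is always odd, no $\mathfrak t_k$ is self-conjugate, so the conjugate pair $\{\mathfrak t_k,\mathfrak t_{-1-k}\}$ straddles the (absent) symmetric value $\langle c_1,E_2\rangle=0$; the contributions cancel in pairs except for a single leftover slot, which is precisely the half-integer shift producing $1$ in \cite[Section~5]{KMOS2007}. I expect the main obstacle to be the bookkeeping of this count in the real setting: one must verify that replacing the complex Dirac index by its real (halved) version — the origin of the $1/2$'s in Lemma~\ref{lem:M30_dim_stable} — does not change the parity of the wall-crossing count, and that the positive scalar curvature hypotheses at $T=\pm\infty$ leave precisely the expected reducible endpoints and no stray irreducible ends (this last point is essentially what Corollary~\ref{cor:M30_dim} supplies). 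Once the parity is matched to the ordinary case, $a_k^1+a_{-1-k}^1\equiv 1\Mod 2$ follows.
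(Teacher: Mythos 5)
Your starting point is reasonable: the reduction of $a^1_k$ to data attached to the unique abelian ASD connection on $N^*$ (using $b_1=0$ and negative definiteness), and the observation $i_k=i_{-1-k}$, are both correct. But the mechanism you propose is not the one that makes the lemma true, and the decisive step is missing. The constraint that cuts the zero-dimensional set $M_k(M_{30}^*,\mathfrak a^1_{i_k})_{\bar Q}$ out of the one-parameter family $\bar Q$ is not a Dirac spectral-flow/wall-crossing condition; it is the condition on the asymptotic flat limit of the ASD connection $A(k,g_T)$, which defines a path $\theta_k:\bar Q\to\mathcal S$ into the circle $\mathcal S$ of real flat connections on $S_{30}\cong S^1\times S^2$, and which must land over the index-one Morse critical point $\alpha^1$ (a codimension-one condition in $T$); the Dirac data only fixes the level $i_k$ through the dimension formula. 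Hence $a^1_k$ is an intersection count of the path $\theta_k$ with a generic point of $\mathcal S$, and $a^1_k+a^1_{-1-k}$ is the mod-2 degree of the $1$-cycle obtained by gluing $\theta_k$ and $\theta_{-1-k}$. The gluing uses the relation $\theta_{-1-k}(g)=\sigma\theta_k(g)$, where $\sigma$ is the involution of $\mathcal S$ determined by a real spin structure on $S_{30}$: this is a symmetry in the target circle at a fixed metric, not the parameter reversal $T\mapsto -T$ that you attribute to the deck transformation (the deck transformation of $N$ over $\mathbb B_{30}$ does not interchange $S_{31}$ with $S_{20}$, so it does not reverse the family $\bar Q$).

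The remaining, and essential, step is to show that this glued cycle has odd degree. Your proposal replaces it with the heuristic that, since $\langle c_1(\mathfrak t_k),E_2\rangle=2k+1$ is odd, the conjugate pair ``straddles'' the absent symmetric value and ``contributions cancel in pairs except for a single leftover slot.'' That is not an argument: the assertion is that two specific integers have opposite parity, and conjugation symmetry alone cannot force this — indeed, elsewhere in the paper (e.g.\ in the proof of Proposition~\ref{prop:Hidentity}) the same symmetry is used to produce \emph{even} counts. The paper establishes oddness by an explicit model of the map $\theta_k$, following \cite[Lemma~5.10]{KMOS2007}, tracking how the asymptotic flat limit moves around $\mathcal S$ as the metric is stretched along $S_{31}$ versus $S_{20}$, and then identifies the point count of the cycle with $a^1_k+a^1_{-1-k}$ via an explicit cobordism of fibre products (using that $M_k(M_{30}^*,\mathfrak a^1_{i_k})_{\bar Q}$ consists only of reducibles). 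The positive scalar curvature conditions at $T=\pm\infty$ enter only to exclude irreducibles and stray ends, as in Corollary~\ref{cor:M30_dim}; they do not produce the count. Without the degree computation for the glued loop, your argument does not establish the lemma.
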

\begin{lem}
\label{lem:L1-1}
	The map $\check L^1_{-1}$ is the identity map.
\end{lem}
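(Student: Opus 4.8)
The plan is to recognise $\check L^1_{-1}$ as the cobordism map of a product cobordism, by a neck-stretching argument. By construction $\check L^1_{-1}\colon \widecheck C_\bullet(K_3) \to \widecheck C_\bullet(K_0)$ is obtained from the cobordism $\mathbb U$ by inserting the single boundary-unstable critical point $e^1_{-1}$ into its $(\mathbb S_{30}, \del M_{30}) \cong (S^3, \text{U}_2)$ end; concretely it is the matrix $\check L$ introduced above with $\bar n_u$ replaced by $e^1_{-1}$. The topological input, recorded in Part~\textbf{(f)}, is that re-gluing the standard ball-with-discs undoes the excision,
\[
	\mathbb U \cup_{(\mathbb S_{30},\, \del M_{30})} (\mathbb B_{30}, \Delta)
	= \big([0,3] \times \mathbb Y,\ [0,3] \times K_0\big),
\]
the product cobordism from $(\mathbb Y_0, K_0)$ to itself, which induces the identity on $\widecheck{\HMR}$. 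On branched covers this reads $[0,3] \times Y_0 = U \cup_{S_{30}} (S^1 \times D^3)$, where $S^1 \times D^3 = \Sigma_2(\mathbb B_{30}, \Delta)$ carries the reflection involution in each factor and $S_{30} \cong S^1 \times S^2$ is the separating hypersurface.

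First I would stretch the neck of the product cobordism $[0,3] \times Y_0$ along $S_{30}$, using an $\upiota$-invariant PSC metric on the $S^1 \times D^3$ side and the small real perturbation of Example~\ref{exmp:U2} over $S^1 \times S^2$ (so the reducible critical points are the two towers $\{e^1_i\}, \{e^0_i\}$ with vanishing differentials). The cobordism map of the product induces the identity on $\widecheck{\HMR}$ for any metric, and the gluing formula expresses this chain map as a sum, over critical points $\mathfrak c$ on $S^1 \times S^2$, of contributions of solutions on $(S^1 \times D^3)^*$ with asymptotic value $\mathfrak c$, composed with contributions of solutions on $\mathbb U^*$ asymptotic to $\mathfrak c$ and running from $K_3$ to $K_0$. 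The latter, for $\mathfrak c = e^1_{-1}$, is by definition $\check L^1_{-1}$.

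It remains to analyse the $S^1 \times D^3$ factor. Positive scalar curvature rules out irreducible solutions there, so every contributing $\mathfrak c$ is reducible and the corresponding solution is reducible; a dimension count --- the real-index analogue of the one in \cite[Section~5]{KMOS2007}, with the factors of $1/2$ arising as in Lemma~\ref{lem:M30_dim_stable} --- shows that the only $\mathfrak c$ admitting a nonempty zero-dimensional $(S^1 \times D^3)^*$-moduli space is the bottom generator of the Morse-index-one tower, $\mathfrak c = e^1_{-1}$, and that this count is $1$ modulo $2$; conjugation symmetry of the real structure and the PSC assumption exclude every other contribution, in particular from the $\{e^0_i\}$ tower. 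Substituting back into the gluing formula yields $\text{id} = \check L^1_{-1}$.

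I expect the main obstacle to be this last identification: verifying that the distinguished reducible asymptotic of $S^1 \times D^3$ is precisely $e^1_{-1}$ --- the correct tower ($\mu = 1$, not $\mu = 0$) at the correct level --- and not a neighbouring generator or an infinite series. This requires tracking the grading offset between boundary-stable and boundary-unstable reducibles through the Morse--Bott perturbation of the circle of flat connections over $S^1 \times S^2$, and checking that the real indices reassemble with the expected halved Dirac contributions. One must also verify, as in the proof of Proposition~\ref{prop:Hidentity}, that with the PSC metric fixed the relevant parametrised moduli spaces are regular with no stray ends, so that each mod-$2$ count in the gluing formula is the one claimed.
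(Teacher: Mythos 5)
Your proposal follows essentially the same route as the paper: decompose the product cobordism $[0,3]\times Y_0$ as $U \cup_{S_{30}} (S^1\times B^3)$ (the branched cover of the ball re-glued with two standard discs $\Delta$), stretch along $S_{30}$ with an invariant PSC metric on the $S^1\times B^3$ side and the perturbation of Example~\ref{exmp:U2}, and observe that the only asymptotic limit contributing a nonempty zero-dimensional moduli space on the $S^1\times B^3$ factor is $e^1_{-1}$, with count $1$ mod $2$, so that the identity (up to the chain homotopy furnished by the one-parameter family of metrics) equals $\check L^1_{-1}$. The remaining verification you flag --- that the index count singles out $\mu=1$, $j=-1$ --- is exactly the step the paper settles by the real dimension formula, so your outline matches the paper's proof.
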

\begin{proof}[Proof of Lemma~\ref{lem:sumis1}]
	The proof is the real version of \cite[Lemma~5.10]{KMOS2007}, based on the analysis of (perturbed) abelian anti-self-dual equations.
	Let $g$ be an invariant Riemannian metric on $N^*$, cylindrical along the ends.
	Let $\mathfrak t_k$ be a real spin\textsuperscript{c} structure.
	Since there is no first homology and no self-dual square-integrable harmonic $2$-forms on $N^*$, there is a unique spin\textsuperscript{c} connection $A(k,g)$ satisfying the abelian ASD equation with $L^2$-curvature.
	By uniqueness, $A(k,g)$ is invariant under the real structure.
	The real flat spin\textsuperscript{c} connection modulo gauge form a circle $\mathcal S$, just like the ordinary case. 
	So by looking at the limit of $A(k,g)$ (which is asymptotically flat), we obtain a map $\theta_k(g) \in \mathcal S$.
	By choosing a spin structure on $S_{30}$ and a compatible real structure, we obtain an involution on the circle $\sigma: \mathcal S \to \mathcal S$, with fixed point $\mathfrak s_+$ and $\mathfrak s_-$.
	The choice of real spin structure provides an identification between $\bar{\mathfrak t}_k$ and $\mathfrak t_{-1-k}$. 
	We have 
	\begin{equation*}
		\theta_{-1-k}(g) = \sigma\theta_k(g).
	\end{equation*}
	The map $\theta$ can be extended to broken metrics 
	\begin{equation*}
		\theta_k: \bar Q \to \mathcal S.
	\end{equation*}
	The extended map satisfies $\theta_k(\pm \infty) \in \{s_+,s_-\}$ and $\theta_{k}(+\infty) = \theta_{-1-k}(+\infty)$.
	We obtain a mod-2 cycle in $\mathcal S$ by gluing $\theta_{k},\theta_{-1-k}:[-\infty,+\infty] \to \mathcal S$.
	The key fact is the cycle $\theta_k \cup \theta_{1-k}$ is nonzero mod-2.
    This can be proved by considering an explicit model of the map $\theta_k$ as in \cite[Lemma~5.10]{KMOS2007}.
	
	Our lemma follows from the nonzero mod-2 degree.
	Indeed, there is a cobordism, for $x \in 
	\mathcal S$ generic,
	from
	\begin{equation*}
		\{x\} \times_{\mathcal S} (M^{ab}_k(M_{30}^*,\mathcal S) \cup M^{ab}_{-1-k}(M_{30}^*, \mathcal S)
	\end{equation*}
	whose count is the degree of the cycle, 
	to
	\begin{equation*}
		(M^{ab}_k(M_{20}, \mathfrak a^1)_{\bar Q} \cup M^{ab}_{-1-k}(M_{20}, \mathfrak a^1)_{\bar Q} ) \times 
		M^{ab}(\mathfrak a^1, ([0,1] \times (\mathbb S_{30} \cap M_{30})^*,\mathcal S) \times_{\mathcal S} \{x\}.
	\end{equation*}
	The superscript ``ab'' denotes moduli spaces of perturbed abelian anti-self-dual equation.
	The middle term $M^{ab}(\mathfrak a^1, ([0,1] \times (\mathbb S_{30} \cap M_{30})^*,\mathcal S) $ consists of solutions to the perturbed anti-self-dual equation where we take no perturbation at $t=\infty$, and perturbation at $t=-\infty$. 
	The fibre product  is obtained by taking the limit as $t \to \infty$.
	Since $M_k(M_{30}^*,\mathfrak a^1_{i_k})_{\bar Q} $ contains only reducibles, we have an identification of $M^{ab}_k(M_{20}, \mathfrak a^1)_{\bar Q} $ with $M_k(M_{30}^*,\mathfrak a^1_{i_k})_{\bar Q} $. 
\end{proof} 
\begin{proof}[Proof of Lemma~\ref{lem:L1-1}]
	Recall our previous observation: 
\begin{equation*}
	([0,3] \times \mathbb Y, \Sigma_{30}) = ([0,3] \times \mathbb Y - \mathbb B_{30}, [0,3] \times K_0 - \Delta) \cup_{(\mathbb S_{30}, \del  M_{30} )} (\mathbb B_{30}, M_{30}).
\end{equation*}
That is, if one cuts off $(\mathbb B_{30}, M_{30})$ from $([0,3] \times \mathbb Y,\Sigma_{30})$, then the resulting orbifold is the product cobordism with two standard discs in a 4-ball removed.
On the level of the double branched cover, if we remove $N$ from $W_{30}$ and glue back in an $S^1 \times B^3$, then we obtain the cylinder $[0,3] \times Y$.
The essence of the proof is to compare $W_{30}$ with the identity cobordism.	
Choose an invariant Riemannian metric on the branched cover $S^1 \times B^3$ of $B^4$ along $\Delta$ with positive scalar curvature. 
Assume that near the boundary $S_{30}$ the metric is cylindrical and standard.
We consider an 1-parameter family of metrics by inserting cylinder $[-T,T] \times S_{30}$ and perturbations supported on the inserted cylinders.
This provides us with a chain homotopy from the map
\begin{equation*}
	\check m([0,3] \times K_3):\check C_{\bullet}(K_3) \to C_{\bullet}(K_0)
\end{equation*}
to the map
\begin{equation*}
	\sum_{j < 0} b_j^{0} \check L^{0}_j  + \sum_{j < 0} b_j^{1} \check L^{1}_j
\end{equation*}
where
\begin{equation*}
	\bar n_u(\Delta)= \sum_{j < 0} b_j^{0} \check e^{0}_j  + \sum_{j < 0} b_j^{1} \check e^{1}_j.
\end{equation*}
and
\begin{equation*}
	b^{\mu}_j = \# M(\Delta, \mathfrak a^{\mu}_j).
\end{equation*}
But only when $j=-1$ and $\mu = 1$ the moduli space is zero-dimensional.
In particular, the count $b^1_{-1}$ is $1$ coming from the unique reducible solution to the perturbed Seiberg-Witten equations.
\end{proof}

\section{Examples}
\label{sec:exmp}
\begin{figure}[hbt]
	\includegraphics[width=3in]{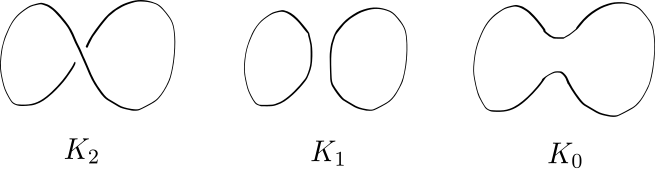}
	\caption{$\textbf{U}_1 \to \textbf{U}_2 \to \textbf{U}_1$}
\end{figure}
\subsection{$\textbf{U}_1 \to \textbf{U}_2 \to \textbf{U}_1$}
\hfill \break
Let $K_2 = K_0 = \text{U}_1$ and $K_1 = \text{U}_1$.
Let us describe the exact triangle for the ``bar'' version.
Let $\mathfrak s_0$ be unique torsion real spin\textsuperscript{c} structures on $\text{U}_2$.
Its real monopole Floer homology is
\[
	\overline{\HMR}_{\bullet}(\text{U}_2)
	= 
	\mathbb{F}_2[\upsilon^{-1},\upsilon]] \oplus \mathbb{F}_2[\upsilon^{-1},\upsilon]]\langle -1\rangle.
\]
The second tower is generated by critical point $\mathfrak a^0_0$ as an $\mathbb F[[\upsilon]]$-module, having absolute grading $(-1)$. 
The cobordism $\Sigma_{21}$ is branched covered by a punctured $S^2 \times D^2$, and only the spin structure is relevant to the cobordism map.
In particular,
\[
	\overline{\HMR}(\Sigma_{21}): \overline{\HMR}_{\bullet}(\text{U}_1)
	\to \overline{\HMR}_{\bullet}(\text{U}_2)
\]
takes $\overline{\HMR}_{\bullet}(\text{U}_1) \cong \mathbb F_2[\upsilon^{-1},\upsilon]]$ isomorphically to the tower $\mathbb{F}_2[\upsilon^{-1},\upsilon]]\langle -1\rangle$, as the  unique reducible non-blown-up Seiberg-Witten solution on $W_{21}$ restricts to the flat spin\textsuperscript{c} connection on $S^1 \times S^2$ at the minimum of the Morse function.
On the other hand, the map
\[
	\overline{\HMR}(\Sigma_{10}): \overline{\HMR}_{\bullet}(\text{U}_2)
	\to \overline{\HMR}_{\bullet}(\text{U}_1)
\]
maps the first $\mathbb{F}_2[\upsilon^{-1},\upsilon]]$-tower isomorphically to $\overline{\HMR}_{\bullet}(\text{U}_1)$ and the rest to zero.
It follows from the exact triangle that the map
\[
	\overline{\HMR}(\Sigma_{0,-1}): \overline{\HMR}_{\bullet}(\text{U}_1)
	\to \overline{\HMR}_{\bullet}(\text{U}_1)
\]
must be zero.
This can be seen directly by noticing the cobordism $W_{0,-1}$ is a twice punctured $\overline{\mathbb{CP}}^2$, and components for the conjugate spin\textsuperscript{c} structures on $W_{0,-1}$ cancel out modulo two.

\begin{figure}[hbt]
	\includegraphics[width=3in]{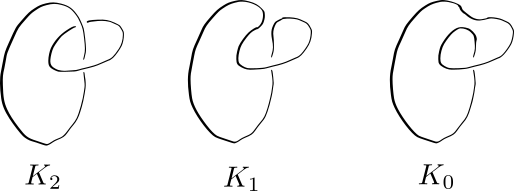}
	\caption{$\textbf{Hopf} \to \textbf{U}_1 \to \textbf{U}_1$}
\end{figure}
\subsection{$\textbf{Hopf} \to \textbf{U}_1 \to \textbf{U}_1$}
\hfill \break
Let $K_2$ be the Hopf link and $K_1 = K_0$ be the unknot.
The double branched cover of $K_2$ is $\mathbb{RP}^{3}$ which supports two self-conjugate spin\textsuperscript{c} structures $\mathfrak s_{\pm}$.
Cobordism 
$W_{21}$ is spin and negative definite.
Let $\mathfrak t_0$ be the spin structure on $W_{21}$ and suppose $\mathfrak s_+$ is the restriction of $\mathfrak t_0$.
Then $\overline{\HMR}(\Sigma_{21},\mathfrak t_0)$ is the leading order term of the series $\overline{\HMR}(\Sigma_{21})$ and is an isomorphism.
So $\overline{\HMR}(\Sigma_{21})$ maps $\overline{\HMR}_{
\bullet}(K_2,\mathfrak s_+)$ isomorphically to $\overline{\HMR}_{
\bullet}(K_0)$.
In fact, the conjugate symmetry $\mathfrak t \mapsto \bar{\mathfrak t}$ on $\overline{\HMR}(\Sigma_{21},\mathfrak t)$ and the fact that all spin\textsuperscript{c} structures involved are self-conjugate imply that $\overline{\HMR}(\Sigma_{21},\mathfrak t_0)$ is the only nonzero term while the rest cancel in pairs. 
The component $\overline{\HMR}_{
\bullet}(K_2,\mathfrak s_-)$ lies in the kernel of  $\overline{\HMR}(\Sigma_{21})$. 
Similarly, the cobordism $W_{32}$ is also spin and the cobordism map has a unique summand which maps $\overline{\HMR}_{
\bullet}(K_3)$  isomorphic to $\overline{\HMR}_{
\bullet}(K_2,\mathfrak s_+)$.

\begin{figure}[hbt]
	\includegraphics[width=3.2in]{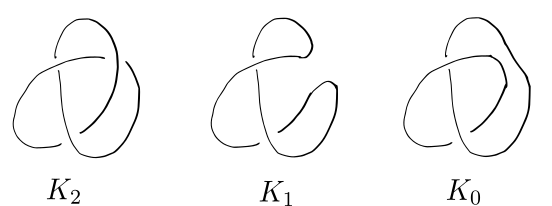}
	\caption{RHT $\to$ Unknot $\to$ Hopf link}
\end{figure}
\subsection{RHT $\to$ Unknot $\to$ Hopf link}
\hfill \break 
Let $K_2$ be the right-handed trefoil, $K_1$ be the unknot, and $K_0$ be the Hopf link.
They are minimal $L$-links, so it suffices to describe the $\overline{\HMR}$-version of the exact triangle.
Both $K_2$ and $K_0$ support multiple torsion spin\textsuperscript{c} structures.
Let $\mathfrak s_0$ be the self-conjugate real spin\textsuperscript{c} structure on $K_2$, and $\mathfrak s_1, \mathfrak s_2$ be the two other spin\textsuperscript{c} structures.
By the convention in Example~\ref{exmp:rational}, label the critical points on $K_2$ by $\{\mathfrak a_i^{\mu}\}$ for $i \in \mathbb Z$ and $\mu = 0,1,2$.
Similarly, label the critical points on $K_0$ by $\{\mathfrak a^{\pm}_j\}$, where $j \in \mathbb Z$, and critical points on $K_1$ by $\{\mathfrak a_j\}$.

The cobordism $W_{21}$ is obtained from 2-handle attachment and $H_2(W_{21},
\mathbb Z) \cong \mathbb Z$.
A generator of $H_2$ has self-intersection $(-3)$, so conjugation acts freely on the spin\textsuperscript{c} structures over $W_{21}$.
By the conjugate spin\textsuperscript{c} symmetry, for any spin\textsuperscript{c} structure $\mathfrak{t}$, 
\[
	\overline{\HMR}(\Sigma_{21}, \bar{\mathfrak t})(\mathfrak a^{1}_i) = \overline{\HMR}(\Sigma_{21}, \mathfrak t)(\mathfrak a^{2}_i),\quad 
	\overline{\HMR}(\Sigma_{21}, \bar{\mathfrak t})(\mathfrak a^{0}_i) = \overline{\HMR}(\Sigma_{21}, \mathfrak t)(\mathfrak a^{0}_i)
\]
from where we deduce that $\overline{\HMR}(\Sigma_{21})(\mathfrak a_j^0) = 0$.
Furthermore,  
\[ 
	\overline{\HMR}(\Sigma_{21}):\overline{\HMR}_{\bullet}(K_2) \to \overline{\HMR}_{\bullet}(K_1)
\]
	maps each of the $\{\mathfrak a^1_i\}, \{\mathfrak a^2_i\} $ towers isomorphically to the $\{\mathfrak a_i\}$ tower in $\overline{\HMR}_{\bullet}(K_1)$.
(This can be argued directly following the discussion in \cite[Section~4.14]{KMOS2007}.)
%Hence $\overline{\HMR}(\Sigma_{21})$ surjects onto $\overline{\HMR}_{\bullet}(K_1)$. 
The exact triangle implies $\overline{\HMR}(\Sigma_{10}) = 0$, which can alternatively be deduced from $b^+(W_{10}) > 0$.

The cobordism $W_{32}$ is spin. Assume $\mathfrak s_+$ is the unique spin\textsuperscript{c} structure that is the restriction of the unique spin structure $\mathfrak t_0$ on $W_{32}$, extending the unique spin structure on $K_2$.
Then the $\mathfrak t_0$ component of the cobordism map is an isomorphism onto the $\mathfrak s_0$-component of $\overline{\HMR}_{
\bullet}(K_2)$, and has the lowest absolute grading.
Moreover, the conjugation symmetry implies that the image of an element $\overline{\HMR}_{
\bullet}(K_3)$ under $\overline{\HMR}(\Sigma_{32})$ is symmetric in $\mathfrak s_1$ and $\mathfrak s_2$ components of $\overline{\HMR}_{
\bullet}(K_2)$. It follows that  $\overline{\HMR}(\Sigma_{32})$ injects into the kernel of $\overline{\HMR}(\Sigma_{21})$, as expected.
\section*{Acknowledgment}
I would like to thank my advisor Peter Kronheimer for his guidance and support.
I would also like to thank Hokuto Konno, Jin Miyazawa, and Masaki Taniguchi for several discussions and sharing their drafts.
\bibliographystyle{plain}
\bibliography{Triangle.bib}
\end{document}